\renewcommand{\*}{~\Circled[inner ysep=0.05pt, inner xsep=0.05pt]{*}~}
\newcommand{\notimplies}{%
  \mathrel{{\ooalign{\hidewidth$\not\phantom{=}$\hidewidth\cr$\implies$}}}}
\newtheorem{theorem}{Theorem}[section]
\newtheorem{lemma}[theorem]{Lemma}
\newtheorem{corollary}[theorem]{Corollary}
\theoremstyle{definition}
\newtheorem{remark}[theorem]{Remark}
\newtheorem{definition}[theorem]{Definition}
\newtheorem{proposition}[theorem]{Proposition}
\newcommand{\Aut}[1]{\text{Aut}(#1)}
\begin{document}

\begin{center}

\Large
\textbf{Combinatorial spectra of graphs}

\end{center}

\begin{center}
\large
Martin Dz\'urik
\end{center}

\begin{center}
\textit{Department of Mathematics and statistics, Faculty of Science,
Masaryk University, Kotl\'a\v rsk\'a 2, CZ-61137 Brno, Czech Republic}
\end{center}

\begin{center}

\end{center}
\begin{center}

\end{center}

\begin{center}
\textbf{Abstract}
\end{center}
In this article we are introducing combinatorial spectra of graphs, this is a generalization of $H$-Hamiltonian spectra. The main motivation was to made from $H$-Hamiltonian spectra an operation
and develop some algebra in this field. An improved version of this operation form a commutative monoid. The most important thing is that most of the basic concepts of graph theory, such as maximum pairing, vertex and edge connectivity and coloring, Ramsey numbers, isomorphisms and regularity, can be expressed in the language of this operation.


\newpage

\begin{figure}
  \centering
  \includegraphics[width=14cm]{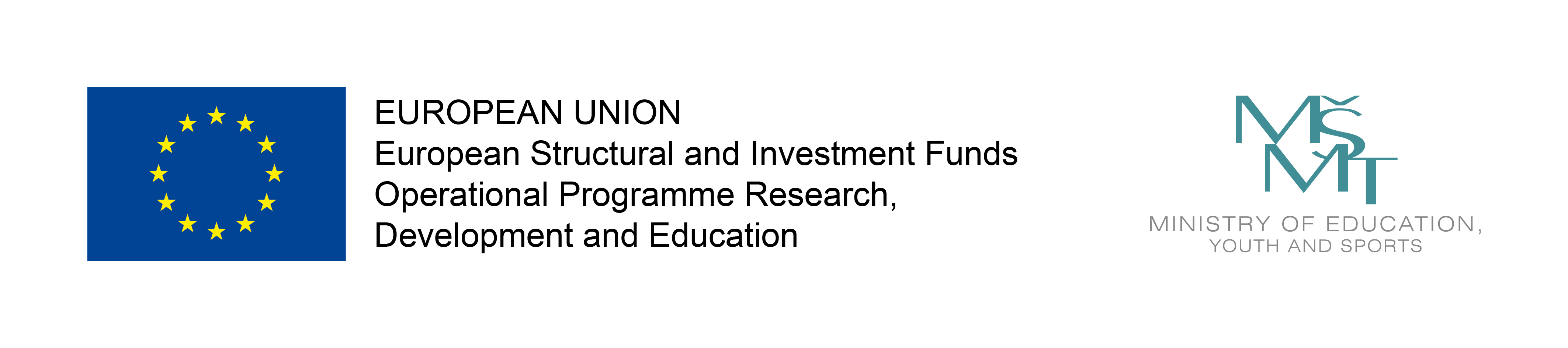}
  \label{fig:test}
\end{figure}
This work was supported from Operational Programme Research, Development
and Education – „Project Internal Grant Agency of Masaryk University" (No.
CZ.02.2.69/0.0/0.0/19\_073/0016943)

\section{Introduction}

In the article \cite{cit1} we have created a generalization of the Hamiltonian spectrum of a graph $G$ called the $H$-Hamiltonian spectrum of the graph $G$ denoted by ${\mathscr{H}}_{H}(G)$. Not only does this generalization give us the opportunity to talk, for example, about the isomorphism of graphs and the regularity of graphs in the language of these spectra, but there are several relations between ${\mathscr{H}}_{H}(G)$ and ${\mathscr{H}}_{H'}(G)$ for related $H$ and $H'$, for example for $H'=\overline{H}$. And so this brings some basic calculus to this area. 

This approach has led us to one more generalization, which we call the combinatorial spectrum. This spectrum is now for $R$-weighted graphs, more precisely for sets of $R$-weighted graphs and we denote it $\mathscr{H} * \mathscr{G}$. This will play a role of multiplication, we also get some addition $+$. When we denote $\mathscr{P}(Graphs)$ the set of all sets of graphs we will get that $(\mathscr{P}(Graphs),*,+)$ is a $R$-module, semigroup (or commutative monoid) and something close to ring. The most important thing is that most of the basic concepts of graph theory, such as maximum pairing, vertex and edge connectivity and coloring, Ramsey numbers, isomorphisms and regularity, can be expressed in the language of these combinatorial spectra. Together with the already mentioned calculus, it gives us a hope that all these concepts could be linked, studied together and transfer the results from one to the other.

In this article we will recall a definition of generalized Hamiltonian spectra of
undirected finite graphs from the article \cite{cit1} and we will define combinatorial spectra. Then we will show some basic 
algebraic properties of these spectra and describe connections to another 
topics in graph theory. At the end we will show another properties of combinatorial spectra.

The author would like to thank Lukáš Vokřínek for many helpful discussions.

Recall that,
a graph $G$ is {a} pair 
\[
G=(V(G),E(G)),
\]
where $V(G)$ is {a} finite set of vertices of $G$ and $E(G)\subseteq V(G) \times V(G)$,
a {symmetric} antireflexive relation, {is a set of edges}. {We will denote an edge between $v$ and $u$ by $\{v,u\}$.}

Let us recall the definition of the generalized Hamiltonian spectrum from \cite{cit1}.
\begin{definition}[\cite{cit1}]\label{dstara}
Let $G,H$ {be} graphs such that $|V(G)|=|V(H)|$ and \\ $f\,:\,V(H)\rightarrow V(G)$ {is a}
bijection, then we call $f$ {a} {\it pseudoordering} on {the} graph $G$ (by $H$), 
denote 
\[ 
s_H(f,G)=\sum_{\{x,y\}\in E(H)}\rho_G(f(x),f(y)),
\] 
where
$\rho_G (x,y)$ is {the} distance of $x,y$ in {the} graph $G$. Then 
\[
\mathscr{H}_{H}(G)=\{s_H(f,G)|f~\text{pseudoordering on}~ G \text{ by $H$}\}
\]
is {the} {\it $H$-Hamiltonian spectrum} of {the} graph $G$.
\end{definition}

Now we will define a combinatorial spectrum.
In the rest of this article we will use a notation $R$ for a ring.
In combinatorial spectra we talk about $R$-weighted complete graphs,
for many application we will use simply $R=\mathbb{R}$.

\begin{definition}
Let $H$ and $G$ be $R$-weighted complete graphs such that $|V(G)|=|V(H)|$ with weighted functions 
\[
v_H:E(H) \rightarrow R,
\] 
\[
v_G:E(G) \rightarrow R,
\]
and $f\,:\,V(H)\rightarrow V(G)$ be a bijection then we define $H *_f G$ as follows 
\[
V(H *_f G) = V(H), \quad E(H *_f G)=E(H), \quad v_{ H *_f G}(e)=v_H(e) \cdot v_G(f(e))
\]
\end{definition}

\begin{definition}\label{dprva}
Let $H$ and $G$ be $R$-weighted complete graphs such that $|V(G)|=|V(H)|$, we define
\[
H*G=\{H *_f G ~| ~f : V(G) \rightarrow V(H),\text{ bijection}\}
\]
and
\[
H \* G = \sfrac{\langle H*G \rangle}{iso}, 
\]
where $\langle-\rangle$ means closure under isomorphisms (but on given set of vertices).
\end{definition}

\begin{remark}
$\*$ clould be equivalently defined by 
\[
H*G=\{[H *_f G] ~| ~f : V(G) \rightarrow V(H),\text{ bijection}\},
\]
where $[K]$ is an isomorphism class of graph $K$.
\end{remark}

\begin{remark}
\[
\langle-\rangle : \mathscr{P}(R-Graphs_n) \rightarrow \mathscr{P}(R-Graphs_n)
\]
is an isotonic idempotent map.

\end{remark}

\begin{definition}
Let $\mathscr{H},~\mathscr{G}$ be sets (or classes) of $R$-weighted complete graphs with the same number of vertices than we define 

\[
\mathscr{H}*\mathscr{G}=
\bigcup_{\substack{G \in \mathscr{G}\\
H \in \mathscr{H}}} 
H*G,\quad
\mathscr{H}\*\mathscr{G}=
\bigcup_{\substack{G \in \mathscr{G}\\
H \in \mathscr{H}}} 
H\*G.
\]
\end{definition}

\begin{remark}
Now $*$ and$\*$are operations
\[\begin{tikzcd}[row sep=tiny]
*:\mathscr{P}(R-Graphs_n)\times \mathscr{P}(R-Graphs_n)\arrow[r] &\mathscr{P}(R-Graphs_n)
\\
\*:\sfrac{\langle\mathscr{P}(R-Graphs_n)\rangle}{iso}\times \sfrac{\langle\mathscr{P}(R-Graphs_n)\rangle}{iso}\arrow[r] &\sfrac{\langle\mathscr{P}(R-Graphs_n)\rangle}{iso}.
\end{tikzcd}\]
Where $\mathscr{P}(R-Graphs_n)$ denotes the set of all sets of $R$-weighted complete graphs on $n$ vertices.

The first part is obvious and the second part is given by the following commutative diagram. 
\[
\begin{tikzcd}
\mathscr{P}(R-Graphs_n)\times \mathscr{P}(R-Graphs_n)  \arrow[r, "*"] \arrow[d, two heads,"\langle - \rangle \times\langle - \rangle"', bend right] & \mathscr{P}(R-Graphs_n)\arrow[d, two heads,"\langle - \rangle"', bend right]\\
\langle\mathscr{P}(R-Graphs_n)\rangle\times \langle\mathscr{P}(R-Graphs_n)\rangle \arrow[u, bend right,hook] \arrow[d, "\cong"]\arrow[r, dotted,"!"] & \langle\mathscr{P}(R-Graphs_n)\rangle\arrow[d, "\cong"]\arrow[u, bend right,hook]\\
\sfrac{\langle\mathscr{P}(R-Graphs_n)\rangle}{iso}\times \sfrac{\langle\mathscr{P}(R-Graphs_n)\rangle}{iso} \arrow[r, dotted,"!"] & \sfrac{\langle\mathscr{P}(R-Graphs_n)\rangle }{iso},
\end{tikzcd}
\]
where the bottom arrow is $\*$. Where 
\[\langle\mathscr{P}(R-Graphs_n)\rangle
=\{ \langle\mathscr{G}\rangle| \mathscr{G} \in \mathscr{P}(R-Graphs_n)\}
\]

The middle arrow exists uniquely, because of the fact \[
\langle\mathscr{G}*\mathscr{H}\rangle=\langle\mathscr{G}\rangle * \langle\mathscr{H}\rangle.
\]
\end{remark}

\begin{definition}\label{dirho}
Let $G$ be an (connected) unweighted graph then we define $R$-weighted complete graphs $I(G)$, $\rho(G)$ as follows,
\[
V(I(G))=V(\rho(G))=V(G),
\]
\[
v_{I(G)}(e)=
\begin{cases} 
1  & \text{for }e \in E(G)\\
0  &\text{for }e\notin E(G)
\end{cases},
\quad v_{\rho(G)}(\{x,y\}) = \rho_G(x,y).
\]
\end{definition}

In the rest of the section we will show some algebraic properties $*$.

\begin{lemma}\label{l2}
Let $F$, $G$ and $H$ be $R$-weighted complete graphs such that $|V(F)|=|V(G)|=|V(H)|$ and $f\,: V(F) \rightarrow V(G)$, $f\,: V(G) \rightarrow V(H)$ be pseudoorderings then 
\[
F *_{f}( G*_{g } H)=(F*_f G)*_{g \circ f} H , \quad
F \*_{f}( G\*_{g } H)=(F\*_f G)\*_{g \circ f} H .
\]

\end{lemma}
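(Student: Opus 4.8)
The plan is to prove the first identity directly, by checking that the two $R$-weighted complete graphs $F *_f (G *_g H)$ and $(F *_f G) *_{g \circ f} H$ agree on all three pieces of data — vertex set, edge set, and weight function — and then to obtain the second identity as a formal consequence by passing to isomorphism classes. (Here I read the two pseudoorderings as $f\colon V(F)\to V(G)$ and $g\colon V(G)\to V(H)$, as in the displayed formula.)

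First I would dispatch the bookkeeping about vertices and edges. By the definition of $*_\varphi$, the graph $A *_\varphi B$ has $V(A *_\varphi B) = V(A)$ and $E(A *_\varphi B) = E(A)$, so both sets depend only on the left argument. Applying this twice to each side shows that $F *_f (G *_g H)$ and $(F *_f G) *_{g\circ f} H$ both have vertex set $V(F)$ and edge set $E(F)$; one only has to observe that $f$ is a bijection $V(F)\to V(G)=V(G *_g H)$ and $g\circ f$ a bijection $V(F)\to V(H)$, so that both composite products are actually defined.

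The substantive step is the weight computation, and the one point that deserves care is the convention by which a vertex bijection $\varphi$ acts on an edge: $\varphi(\{x,y\}) := \{\varphi(x),\varphi(y)\}$. With this convention the induced edge maps compose, i.e. $(g\circ f)(e) = g(f(e))$ for every $e \in E(F)$, because $\{g(f(x)),g(f(y))\} = g(\{f(x),f(y)\})$. Granting this, I would expand both weight functions at an arbitrary $e \in E(F)$:
\[
v_{F *_f (G *_g H)}(e) = v_F(e)\cdot v_{G *_g H}(f(e)) = v_F(e)\cdot\bigl(v_G(f(e))\cdot v_H(g(f(e)))\bigr),
\]
\[
v_{(F *_f G) *_{g\circ f} H}(e) = v_{F *_f G}(e)\cdot v_H\bigl((g\circ f)(e)\bigr) = \bigl(v_F(e)\cdot v_G(f(e))\bigr)\cdot v_H(g(f(e))),
\]
and the two right-hand sides coincide by associativity of multiplication in $R$. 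This settles the first identity.

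Finally, the second identity follows by applying the isomorphism-class projection to the first: since $\*_f$ denotes the construction $*_f$ followed by passage to the isomorphism class (on the given vertex set), and isomorphic inputs yield isomorphic outputs at each stage, the equality $F *_f (G *_g H) = (F *_f G) *_{g\circ f} H$ of graphs descends to the asserted equality of classes. I do not anticipate any genuine obstacle; the only thing to stay vigilant about is the edge-map convention in the middle step, since it is precisely what forces the subscript on the right to be $g\circ f$ and not $f\circ g$.
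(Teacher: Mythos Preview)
Your proof is correct and follows exactly the same approach as the paper: both sides are compared by expanding the weight functions at an arbitrary edge $e$ and using associativity of multiplication in $R$, with the $\*$ case dismissed as analogous. Your write-up is simply more explicit about the vertex/edge bookkeeping and the edge-map convention $(g\circ f)(e)=g(f(e))$, which the paper leaves implicit.
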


\begin{proof}
\[
v_{(F*_f G)*_{g\circ f} H}(e)=v_{F*_f G}(e) \cdot v_H(g(f(e)))= v_F(e) \cdot v_G(f(e)) \cdot v_H(g(f(e))),
\]
\[
v_{F *_{f}( G*_{g} H)}(e)= v_{F}(e) \cdot v_{G*_{g} H}(f(e))= 
v_{F}(e) \cdot v_G (f(e)) \cdot v_H (g(f (e))).
\]
For $\*$ similar.
\end{proof}

\begin{lemma}\label{l1}
Let $G$ and $H$ be $R$-weighted complete graphs such that $|V(G)|=|V(H)|$ and $f\,: V(F) \rightarrow V(G)$ be pseudoordering then there is an isomorphism
\[
\xymatrix{&f:~H*_f G & G*_{f^{-1}} H \ar@<1ex>[l]
\ar@<1ex>[l];[]~:f^{-1}.}
\]
\end{lemma}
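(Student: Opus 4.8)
The plan is to check directly that the bijection $f$ satisfies the three defining properties of an isomorphism of $R$-weighted complete graphs, namely that it is a bijection on vertices, that it sends edges to edges, and that it preserves the weight function; the inverse isomorphism will then automatically be $f^{-1}$. The first two properties are immediate from the construction. By definition $V(H*_f G)=V(H)$ and $V(G*_{f^{-1}}H)=V(G)$, and $f\colon V(H)\to V(G)$ is a bijection by hypothesis, so $f$ is a bijection from the vertex set of $H*_f G$ onto that of $G*_{f^{-1}}H$, with two-sided inverse $f^{-1}$. Since both graphs are complete on the same number of vertices, every unordered pair of distinct vertices is an edge on each side, so $f$ carries $E(H*_f G)=E(H)$ onto $E(G*_{f^{-1}}H)=E(G)$.

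The remaining point is weight preservation, and I expect this to be a one-line computation. I would fix an edge $e=\{x,y\}\in E(H)$, so that $f(e)=\{f(x),f(y)\}\in E(G)$, and unwind the definitions on both sides. On the one hand $v_{H*_f G}(e)=v_H(e)\cdot v_G(f(e))$; on the other hand, using the definition of $*_{f^{-1}}$ together with $f^{-1}\circ f=\mathrm{id}_{V(H)}$,
\[
v_{G*_{f^{-1}}H}\bigl(f(e)\bigr)=v_G\bigl(f(e)\bigr)\cdot v_H\bigl(f^{-1}(f(e))\bigr)=v_G\bigl(f(e)\bigr)\cdot v_H(e).
\]
These two values agree: they are the product of the same pair of ring elements $v_H(e)$ and $v_G(f(e))$ (here one uses commutativity of $R$, exactly as the reordering in Lemma~\ref{l2} does; in the applications $R=\mathbb{R}$ this is harmless). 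Hence $f\colon H*_f G\to G*_{f^{-1}}H$ is weight-preserving, so it is an isomorphism, and running the same check with the roles of the two products interchanged — or simply invoking that the inverse of an isomorphism is an isomorphism — shows $f^{-1}$ is the inverse isomorphism.

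I do not expect any genuine obstacle here: the statement is essentially bookkeeping. The only thing to be careful about is tracking which of $V(H)$ and $V(G)$ underlies each of the two products, and noticing that it is precisely the identity $f^{-1}\circ f=\mathrm{id}_{V(H)}$ that makes the factor $v_H(f^{-1}(f(e)))$ collapse to $v_H(e)$. If one prefers not to assume $R$ commutative, the cleanest fix is to record the weights of the two products in matching order before comparing them, after which no commutativity is needed.
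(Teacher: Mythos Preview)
Your proof is correct and follows exactly the paper's approach: the paper's argument is the single displayed line
\[
v_{G*_{f^{-1}} H}(f(e))=v_G (f(e)) \cdot v_H ((f^{-1}\circ f)(e))=v_H(e) \cdot v_G(f(e))= v_{H*_f G}(e),
\]
which is precisely your weight computation, with the bijection and edge checks left implicit. One small caveat: your final parenthetical remark that a non-commutative $R$ can be handled by ``recording the weights in matching order'' is not quite right, since the definitions force the two products $v_H(e)\cdot v_G(f(e))$ and $v_G(f(e))\cdot v_H(e)$ into opposite orders; commutativity of $R$ really is used here (and the paper silently assumes it too).
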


\begin{proof}
\[
v_{G*_{f^{-1}} H}(f(e))=v_G (f(e)) \cdot v_H ((f^{-1}\circ f)(e))=v_H(e) \cdot v_G(f(e))= v_{H*_f G}(e).
\]
\end{proof}

\begin{lemma}\label{l3}
Let $H$ be a $R$-weighted complete graph then 
\[
H*_f I(K_{|V(H)|})= H
\]
\end{lemma}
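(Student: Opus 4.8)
The plan is simply to unwind the definitions; the identity is essentially immediate once one observes what $I(K_n)$ looks like. Write $n=|V(H)|$. The first step is to note that $K_n$ is the complete graph on $n$ vertices, so \emph{every} unordered pair of distinct vertices is an edge of $K_n$; hence by Definition \ref{dirho} the weight function of $I(K_n)$ is constantly $1$, that is, $v_{I(K_n)}(e)=1$ for all $e\in E(K_n)$.

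Next, fix any bijection $f:V(H)\to V(K_n)$ (one exists since both vertex sets have size $n$). By the definition of $*_f$ we already have $V(H*_f I(K_n))=V(H)$ and $E(H*_f I(K_n))=E(H)$, so the underlying unweighted graph is exactly $H$; it remains only to compare the weight functions. For an edge $e=\{x,y\}\in E(H)$, its image $f(e)=\{f(x),f(y)\}$ is a pair of distinct vertices of $K_n$ and hence an edge of $K_n$, so
\[
v_{H*_f I(K_n)}(e)=v_H(e)\cdot v_{I(K_n)}(f(e))=v_H(e)\cdot 1=v_H(e),
\]
where the last equality uses that $1$ is the multiplicative identity of $R$. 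Therefore $H*_f I(K_n)=H$ as $R$-weighted complete graphs.

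There is essentially no obstacle here: the only points deserving a word are that $R$ is assumed to carry a unit $1$ (so that $v_H(e)\cdot 1=v_H(e)$), and that the completeness of $K_n$ is precisely what guarantees that $f(e)$ is always an edge of $K_n$, on which $I(K_n)$ takes the value $1$ — for a non-complete graph in place of $K_n$ the statement would fail. It is also worth remarking that this shows $I(K_n)$ is a right identity for $*_f$ for \emph{every} pseudoordering $f$, which in combination with Lemma \ref{l1} yields a two-sided identity for $*$ and $\*$.
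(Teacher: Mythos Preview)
Your proof is correct and follows exactly the same approach as the paper's own proof, which is the one-line weight computation $v_{H*_f I(K_{|V(H)|})}(e)=v_H(e)\cdot 1=v_H(e)$; you have simply spelled out the surrounding definitional checks and added some helpful remarks.
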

\begin{proof}
\[
v_{H*_f I(K_{|V(H)|})}(e)=v_H(e) \cdot 1 = v_H(e).
\]
\end{proof}
\begin{theorem}

Let $\mathscr{F},~\mathscr{G},~\mathscr{H}$ be sets of $R$-weighted complete graphs of the same number of vertices then
\[
(\mathscr{F}*\mathscr{G})*\mathscr{H}=\mathscr{F}*(\mathscr{G}*\mathscr{H}), \quad
(\mathscr{F}\*\mathscr{G})\*\mathscr{H}=\mathscr{F}\*(\mathscr{G}\*\mathscr{H}).
\]
\[
\mathscr{H}\*\mathscr{G} = \mathscr{G}\*\mathscr{H}, \quad \mathscr{H}\*\bigl\{I(K_{|V(H)|})\bigr\}=\mathscr{H}.
\]

\end{theorem}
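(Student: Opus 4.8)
The plan is to obtain each of the four identities as the ``global'' (set-level) shadow of a ``pointwise'' identity already recorded in Lemmas~\ref{l2}, \ref{l1} and~\ref{l3}. The mechanism is uniform: unwind the definitions $\mathscr{A}*\mathscr{B}=\bigcup_{A\in\mathscr{A},\,B\in\mathscr{B}}A*B$ and $A*B=\{A*_f B\mid f\text{ a bijection}\}$, so that both sides of a claimed identity appear as unions of the same family of graphs (or of isomorphism classes), indexed by a triple of graphs together with one or two bijections between their vertex sets; then check that the two index sets coincide. The latter always reduces to the elementary fact that, for a fixed bijection $f$, the maps $g\mapsto g\circ f$ and $g\mapsto f^{-1}$ are self-bijections of the set of all bijections.

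For associativity of $*$, expand
\[
(\mathscr{F}*\mathscr{G})*\mathscr{H}=\bigcup_{F,G,H}\ \bigcup_{f,h}\ (F*_f G)*_h H ,
\]
where $F,G,H$ range over $\mathscr{F},\mathscr{G},\mathscr{H}$ (all vertex sets of the common cardinality $n$, so every $*_f$ below is defined) and $f,h$ over all bijections of the appropriate vertex sets. By Lemma~\ref{l2}, $(F*_f G)*_h H=F*_f\bigl(G*_{\,h\circ f^{-1}}H\bigr)$; for fixed $f$, as $h$ runs over all bijections so does $g:=h\circ f^{-1}$, so the inner union equals $\bigcup_{f,g}F*_f(G*_g H)$, and unioning over $F,G,H$ gives exactly $\mathscr{F}*(\mathscr{G}*\mathscr{H})$. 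The $\*$-version follows either the same way from the $\*$-part of Lemma~\ref{l2}, or by applying $\langle-\rangle$ and then the iso-quotient to the $*$-identity, using that $\langle-\rangle$ commutes with $\bigcup$ and with $*$ (the latter is the identity $\langle\mathscr{G}*\mathscr{H}\rangle=\langle\mathscr{G}\rangle*\langle\mathscr{H}\rangle$ from the remark preceding Definition~\ref{dirho}).

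For commutativity, note first that $*$ itself is \emph{not} commutative: Lemma~\ref{l1} only gives $H*_f G\cong G*_{f^{-1}}H$, not equality, so the isomorphism quotient is essential here. From that lemma, $[H*_f G]=[G*_{f^{-1}}H]$, and letting $f$ range over all bijections (equivalently over all $f^{-1}$) yields $\langle H*G\rangle/\mathrm{iso}=\langle G*H\rangle/\mathrm{iso}$; unioning over $H\in\mathscr{H}$, $G\in\mathscr{G}$ gives $\mathscr{H}\*\mathscr{G}=\mathscr{G}\*\mathscr{H}$. For the identity element, Lemma~\ref{l3} gives $H*_f I(K_n)=H$ \emph{on the nose} for every bijection $f$, hence $H*I(K_n)=\{H\}$ and $\langle H*I(K_n)\rangle/\mathrm{iso}=\{[H]\}$; taking the union over $H\in\mathscr{H}$ recovers $\mathscr{H}$ in the iso-quotient where $\*$ lives.

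The only genuinely delicate points, which I would make fully explicit, are the reindexing steps: one must verify that composing with, or inverting, a fixed bijection is a bijection of the set of all bijections, so that the two unions in question really are over identical families rather than merely over families with the same members. For the $\*$-statements one additionally needs the compatibility of passing to isomorphism classes with the unions, i.e. that $\langle-\rangle$ commutes with $*$ and with $\bigcup$ and that the iso-quotient of a union of iso-closed sets is the union of the quotients; all of this is already contained in the remarks between Definition~\ref{dprva} and Definition~\ref{dirho}, so no new computation with the weight functions beyond Lemmas~\ref{l2}--\ref{l3} is required.
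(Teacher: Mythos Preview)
Your proposal is correct and follows essentially the same approach as the paper: both derive the set-level identities from Lemmas~\ref{l2}, \ref{l1}, \ref{l3} via the reindexing $g\leftrightarrow g\circ f$ (resp.\ $f\leftrightarrow f^{-1}$) of the set of bijections. Your write-up is in fact more explicit than the paper's, which carries out only the associativity of $*$ and leaves the remaining cases to ``similar''; your additional remarks on the compatibility of $\langle-\rangle$ and the iso-quotient with unions are apt but not strictly needed beyond what the paper already records.
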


\begin{proof}
Equalities follows from Lemmas \ref{l2}, \ref{l1} and \ref{l3}. For example for first one
\begin{alignat*}{3}
(\mathscr{F}&*\mathscr{G})*\mathscr{H} & &\\
&=\bigl\{ (F*_f G)*_g H  &\bigm |     & F\in\mathscr{F},G\in\mathscr{G},H\in\mathscr{H},f,g  \text{ pseudoorderings}     \bigr\} \\
&=\bigl\{ (F*_f G)*_{g'\circ f} H &\bigm |    &  F\in\mathscr{F},G\in\mathscr{G},H\in\mathscr{H},f,g'  \text{ pseudoorderings}     \bigr\}\\
&=\bigl\{ F *_{f}( G*_{g' } H)       &\bigm |     & F\in\mathscr{F},G\in\mathscr{G},H\in\mathscr{H},f,g'  \text{ pseudoorderings}     \bigr\}\\
&=\mathscr{F}*(\mathscr{G}*\mathscr{H}). & & 
\end{alignat*}
For another similar.
\end{proof}

\begin{corollary}
$
(\mathscr{P}(R-Graphs_n),*)
$ is a semigroup and \[(\sfrac{\langle\mathscr{P}(R-Graphs_n)\rangle}{iso},\*,\bigl\{I(K_{|V(H)|})\bigr\})\] is a comutative monoid.
\end{corollary}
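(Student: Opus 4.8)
The plan is to deduce the corollary directly from the preceding theorem, treating the two claims separately. For the first claim, that $(\mathscr{P}(R\text{-}Graphs_n),*)$ is a semigroup, all that is needed is that $*$ is a well-defined binary operation on $\mathscr{P}(R\text{-}Graphs_n)$ (already noted in the remark after the definition of $\mathscr{H}*\mathscr{G}$) together with associativity, which is exactly the first displayed equation of the theorem. So this part is a one-line invocation.

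For the second claim, that $(\sfrac{\langle\mathscr{P}(R\text{-}Graphs_n)\rangle}{iso},\*,\{I(K_{|V(H)|})\})$ is a commutative monoid, I would check the three monoid axioms in turn. Associativity of $\*$ is the second displayed equation of the theorem; commutativity $\mathscr{H}\*\mathscr{G}=\mathscr{G}\*\mathscr{H}$ is the third displayed equation; and the identity law $\mathscr{H}\*\{I(K_{|V(H)|})\}=\mathscr{H}$ is the fourth. One should also remark that $\*$ is a well-defined operation on the quotient $\sfrac{\langle\mathscr{P}(R\text{-}Graphs_n)\rangle}{iso}$, which was established by the commutative diagram in the earlier remark (via $\langle\mathscr{G}*\mathscr{H}\rangle=\langle\mathscr{G}\rangle*\langle\mathscr{H}\rangle$). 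Since the element $\{I(K_{|V(H)|})\}$ is a genuine element of the quotient (it is a single isomorphism class), all axioms are met and the structure is a commutative monoid.

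The only genuinely delicate point — and the place I would be most careful — is a bookkeeping subtlety about the identity element: the notation $\{I(K_{|V(H)|})\}$ tacitly fixes $n=|V(H)|$, so strictly speaking there is one monoid for each $n$, with its own identity $\{I(K_n)\}$; I would state the corollary with $n$ fixed throughout and note that $I(K_n)$ is the (unique up to the identification already built into $\langle-\rangle$) two-sided identity. There is nothing to compute here beyond citing the theorem, so I would keep the proof to essentially: \emph{this is immediate from the theorem together with the well-definedness observations in the remarks above}, perhaps spelling out which displayed equation gives which axiom. The main obstacle is purely expository — making sure the reader sees that the four equations in the theorem are precisely the semigroup/monoid/commutativity axioms and that well-definedness on the quotient has already been handled — rather than any mathematical difficulty.
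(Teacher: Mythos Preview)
Your proposal is correct and matches the paper's approach: the corollary is stated without proof in the paper, being an immediate consequence of the preceding theorem (which supplies associativity, commutativity, and the identity) together with the earlier remark establishing that $*$ and $\*$ are well-defined operations. Your write-up is in fact more careful than the paper's, since you explicitly track which displayed equation yields which axiom and flag the dependence of the identity on $n$.
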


\begin{definition}
Let $G$ be a $R$-weighted complete graph with weighted function $v_H$ then we define 
\[
s(G)=\sum_{e \in E(G)} v_H(e).
\] 
And for $\mathscr{G} \in \mathscr{P}(R-Graphs_n)$ we define 
\[
s(\mathscr{G})=\{s(G)|G\in\mathscr{G} \}.
\]
\end{definition}


\begin{definition}
Let $H_1,H_2\in R-Graphs_n$ such that $V(H_1)=V(H_2)$ then we define an operation $+$ on $ R-Graphs_n$ as follows
\[
V(H_1+H_2)=V(H_1), \quad 
v_{H_1+H_2}(e)=v_{H_1}(e)+v_{H_2}(e).
\]
We also define an operation on $ \mathscr{P}(R-Graphs_n)$ as follows, let $\mathscr{H}_1,\mathscr{H}_2\in \mathscr{P}(R-Graphs_n)$ such that all graphs have the same set of vertices then  
\[
\mathscr{H}_1+\mathscr{H}_2=\left\{ H_1+H_2\middle|H_1\in \mathscr{H}_1,  H_2\in \mathscr{H}_2              
\right\}
\]
\end{definition}

\begin{lemma}\label{l8}
Let $H_1,H_2,G$ be $R$-weighted complete graphs with the same number of vertices such that $V(H_1)=V(H_2)$ then it holds
\[
(H_1+H_2)*G\subseteq H_1 * G + H_2 * G
\]
\[
(H_1+H_2)\*G\subseteq H_1 \* G + H_2 \* G. 
\]
\end{lemma}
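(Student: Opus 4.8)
The plan is to reduce both inclusions to a single pointwise identity on edge weights and then keep track of how the bijection is allowed to range on each side.

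First I would fix an arbitrary bijection $f:V(G)\rightarrow V(H_1)=V(H_2)=V(H_1+H_2)$ and compute, for every edge $e$,
\[
v_{(H_1+H_2)*_f G}(e)=v_{H_1+H_2}(e)\cdot v_G(f(e))=\bigl(v_{H_1}(e)+v_{H_2}(e)\bigr)\cdot v_G(f(e))=v_{H_1*_f G}(e)+v_{H_2*_f G}(e),
\]
where the last step is distributivity in $R$. Since the three graphs $(H_1+H_2)*_f G$, $H_1*_f G$, $H_2*_f G$ all carry the vertex set $V(H_1)$ and the edge set $E(H_1)$, this identity says precisely that $(H_1+H_2)*_f G=(H_1*_f G)+(H_2*_f G)$ as $R$-weighted complete graphs. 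This is the only computation in the proof.

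The first inclusion is then immediate: for each bijection $f$ the graph $H_1*_f G$ lies in $H_1*G$ and $H_2*_f G$ lies in $H_2*G$, using \emph{the same} $f$, so their sum lies in $H_1*G+H_2*G$ by definition of $+$ on sets; taking the union over all $f$ yields $(H_1+H_2)*G\subseteq H_1*G+H_2*G$. I would also remark here why equality cannot be expected in general: on the right-hand side the bijection realizing an element of $H_1*G$ and the one realizing an element of $H_2*G$ vary independently, whereas on the left they are forced to agree.

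For the $\*$ version I would pass to the same identity modulo isomorphism. The point to check is that $+$ descends to $\sfrac{\langle\mathscr{P}(R-Graphs_n)\rangle}{iso}$: for any permutation $\sigma$ of the common vertex set one has $\sigma(A+B)=\sigma A+\sigma B$, so $+$ sends a pair of isomorphism-closed sets to an isomorphism-closed set and is compatible with $\langle-\rangle$. Given any class in $(H_1+H_2)\*G$, pick the representative $(H_1+H_2)*_f G=(H_1*_f G)+(H_2*_f G)$; since $H_1*_f G\in H_1*G\subseteq\langle H_1*G\rangle$ and likewise for $H_2$, its class lies in $H_1\*G+H_2\*G$, and using that $\langle-\rangle$ is isotonic and idempotent (the remark above) this gives $(H_1+H_2)\*G\subseteq H_1\*G+H_2\*G$. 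The only place needing genuine care — what I would treat as the main obstacle — is exactly this bookkeeping: verifying that $+$ is well defined on isomorphism classes via the equivariance $\sigma(A+B)=\sigma A+\sigma B$, and that one is entitled to rewrite the representative $(H_1+H_2)*_f G$ as $(H_1*_f G)+(H_2*_f G)$ before taking classes. Everything else is the one-line distributivity computation.
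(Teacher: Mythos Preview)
Your argument is correct and is exactly the paper's approach: the paper proves the identity $(H_1+H_2)*_f G = H_1*_f G + H_2*_f G$ and then says the inclusions follow, with ``For~$\*$ similar'' in place of your more careful discussion of isomorphism classes. The only quibble is the direction of $f$ in your first line (you write $f:V(G)\to V(H_1)$ but then apply $f$ to edges of $H_1+H_2$); this is a typo, not a mathematical issue, and in fact the paper itself is inconsistent between its two definitions on this point.
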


\begin{proof}
It is easy to prove that
\[
(H_1+H_2)*_f G= H_1 *_f G + H_2 *_f G
\]
and from this we get the statement of the lemma.

For $\*$ similar.
\end{proof}

\begin{lemma}\label{l-1}
Let $H_1,H_2,G$ be $R$-weighted complete graphs with the same number of vertices such that $V(H_1)=V(H_2)$
and $|s(H_1*G)|=1$ then it holds 
\[
s((H_1+H_2)*G) = s(H_1 * G) + s(H_2 * G)
\]
\end{lemma}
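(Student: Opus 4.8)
The plan is to reduce everything to the edgewise identity already recorded in the proof of Lemma~\ref{l8}, namely $(H_1+H_2)*_f G = H_1*_f G + H_2*_f G$, and then to use the extra hypothesis $|s(H_1*G)|=1$ to upgrade the inclusion of Lemma~\ref{l8} to an equality after applying $s$.

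First I would note that $s$ is additive with respect to $+$ on a fixed vertex set: for $R$-weighted complete graphs $A,B$ with $V(A)=V(B)$ one has $s(A+B)=\sum_{e}(v_A(e)+v_B(e))=s(A)+s(B)$, and consequently $s(\mathscr{A}+\mathscr{B})=s(\mathscr{A})+s(\mathscr{B})$ (Minkowski sums of subsets of $R$ on both sides). Combining this with $(H_1+H_2)*_f G=H_1*_f G+H_2*_f G$ gives, for every pseudoordering $f$,
\[
s\bigl((H_1+H_2)*_f G\bigr)=s(H_1*_f G)+s(H_2*_f G),
\]
so that
\[
s\bigl((H_1+H_2)*G\bigr)=\bigl\{\, s(H_1*_f G)+s(H_2*_f G) \;\bigm|\; f \text{ pseudoordering}\,\bigr\}.
\]
The inclusion $\subseteq$ in the statement is then immediate (it is also just $s$ applied to Lemma~\ref{l8}, using $s(\mathscr{A}+\mathscr{B})=s(\mathscr{A})+s(\mathscr{B})$).

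For the reverse inclusion I would invoke the hypothesis. Write $s(H_1*G)=\{c\}$, so $s(H_1*_f G)=c$ for every pseudoordering $f$. An arbitrary element of $s(H_1*G)+s(H_2*G)$ has the form $c+s(H_2*_g G)$ for some pseudoordering $g$; since also $s(H_1*_g G)=c$, this equals $s(H_1*_g G)+s(H_2*_g G)=s\bigl((H_1+H_2)*_g G\bigr)\in s\bigl((H_1+H_2)*G\bigr)$. This gives $\supseteq$, and together with the previous paragraph, equality.

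There is no serious obstacle here; the only delicate point is bookkeeping of the index sets: the description of $s((H_1+H_2)*G)$ ranges over a single pseudoordering $f$ appearing in both summands, whereas $s(H_1*G)+s(H_2*G)$ ranges over two independent pseudoorderings, and it is exactly the constancy of $f\mapsto s(H_1*_f G)$ forced by $|s(H_1*G)|=1$ that lets the diagonal $f=g$ realize every element of the Minkowski sum. (The same argument would work verbatim with $\*$ in place of $*$, since $s$ is invariant under isomorphism.)
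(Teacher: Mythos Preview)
Your proof is correct and follows essentially the same approach as the paper: one inclusion comes from Lemma~\ref{l8}, and for the reverse you take an arbitrary element $s(H_1*_f G)+s(H_2*_g G)$ of the right-hand side and use the hypothesis $|s(H_1*G)|=1$ to replace $f$ by $g$ in the first summand, then recombine via $(H_1+H_2)*_g G=H_1*_g G+H_2*_g G$. Your write-up is a bit more explicit about the additivity of $s$ and the bookkeeping of the two independent pseudoorderings, but the argument is the same.
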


\begin{proof}
From the previous lemma we get one inclusion, lets take an arbitrary element of $ s(H_1 * G) + s(H_2 * G)$ which is equal to 
\[
s(H_1 *_f G) + s(H_2 *_g G)\overset{|s(H_1*G)|=1}{=} s(H_1 *_g G) + s(H_2 *_g G)= s((H_1 + H_2) *_g G)
\] 
and the last thing is an element of $s((H_1+H_2)*G)$.
\end{proof}

\begin{theorem}\label{Tad}
Let $\mathscr{H}_1, \mathscr{H}_2 ,~\mathscr{G}\in \mathscr{P}(R-Graphs_n)$ then
\[
(\mathscr{H}_1+\mathscr{H}_2)*\mathscr{G}\subseteq \mathscr{H}_1 * \mathscr{G} + \mathscr{H}_2 * \mathscr{G} 
\]
\[
(\mathscr{H}_1+\mathscr{H}_2)\*\mathscr{G}\subseteq \mathscr{H}_1 \* \mathscr{G} + \mathscr{H}_2 \* \mathscr{G} 
\]
\end{theorem}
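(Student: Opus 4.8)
The plan is to lift the inclusion already proved at the level of individual graphs in Lemma~\ref{l8} to the level of sets by unwinding the definitions of $*$ and $+$ on $\mathscr{P}(R\text{-}Graphs_n)$. First I would take an arbitrary element of the left-hand side $(\mathscr{H}_1+\mathscr{H}_2)*\mathscr{G}$. By definition of $+$ on sets, such an element arises from a sum $H_1+H_2$ with $H_1\in\mathscr{H}_1$, $H_2\in\mathscr{H}_2$ (and all graphs sharing the same vertex set), together with some $G\in\mathscr{G}$ and a bijection $f$; that is, it is of the form $(H_1+H_2)*_f G$.

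The key step is then to invoke the graph-level identity from the proof of Lemma~\ref{l8}, namely $(H_1+H_2)*_f G = H_1*_f G + H_2*_f G$. The term $H_1*_f G$ lies in $H_1*G\subseteq \mathscr{H}_1*\mathscr{G}$, and similarly $H_2*_f G\in\mathscr{H}_2*\mathscr{G}$; moreover both use the \emph{same} bijection $f$, so they are defined on the same vertex set and their sum is a legitimate element of $\mathscr{H}_1*\mathscr{G}+\mathscr{H}_2*\mathscr{G}$. This shows $(H_1+H_2)*_f G\in\mathscr{H}_1*\mathscr{G}+\mathscr{H}_2*\mathscr{G}$, which is exactly the desired inclusion. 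The $\*$ version follows identically, replacing $*_f$ by $\*_f$ and using the second inclusion of Lemma~\ref{l8} (passing to isomorphism classes commutes with everything in sight, as recorded in the earlier remarks).

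I do not expect a genuine obstacle here: the statement is essentially a bookkeeping exercise that propagates Lemma~\ref{l8} through the unions defining $\mathscr{H}*\mathscr{G}$. The only point requiring a moment's care is the vertex-set hypothesis: the operation $+$ on sets is only defined when all graphs involved have the same vertex set, so one should check that the $H_i$ and $G$ extracted in the first step are compatible and that the two summands $H_i*_f G$ land on a common vertex set (they do, since $V(H_i*_f G)=V(H_i)$ by the definition of $*_f$). Once this is observed, the proof is a two-line diagram chase, which is presumably why the author states it as a theorem but expects the reader to supply the routine unpacking.

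\begin{proof}
Take an arbitrary element of $(\mathscr{H}_1+\mathscr{H}_2)*\mathscr{G}$; it is of the form $(H_1+H_2)*_f G$ for some $H_1\in\mathscr{H}_1$, $H_2\in\mathscr{H}_2$ with $V(H_1)=V(H_2)$, some $G\in\mathscr{G}$, and a bijection $f$. By the identity established in the proof of Lemma~\ref{l8},
\[
(H_1+H_2)*_f G = H_1*_f G + H_2*_f G.
\]
Now $H_1*_f G\in H_1*G\subseteq\mathscr{H}_1*\mathscr{G}$ and $H_2*_f G\in H_2*G\subseteq\mathscr{H}_2*\mathscr{G}$, and both are defined on the vertex set $V(H_1)=V(H_2)$, so their sum lies in $\mathscr{H}_1*\mathscr{G}+\mathscr{H}_2*\mathscr{G}$. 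This proves the first inclusion. For $\*$ the argument is identical, using the second part of Lemma~\ref{l8}.
\end{proof}
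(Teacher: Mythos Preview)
Your proof is correct and follows essentially the same approach as the paper: both arguments unwind the definitions of $*$ and $+$ on sets and invoke Lemma~\ref{l8} (or, as you do, the pointwise identity $(H_1+H_2)*_f G = H_1*_f G + H_2*_f G$ from its proof) to pass from the graph level to the set level. Your element-chase is in fact slightly more careful than the paper's chain of unions, since you explicitly verify the vertex-set compatibility needed for the sum on the right-hand side.
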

\begin{proof}
\begin{align*}
(&\mathscr{H}_1+\mathscr{H}_2)*\mathscr{G} \\
&= \bigcup_{\substack{H_1 \in \mathscr{H}_1,H_2 \in \mathscr{H}_2,\\ G \in \mathscr{G}}} (H_1+H_2)*G \overset{L\ref{l8}}{ \subseteq }
\bigcup_{\substack{H_1 \in \mathscr{H}_1,H_2 \in \mathscr{H}_2,\\ G \in \mathscr{G}}} H_1 *  G + H_2 * G\\
 &= \mathscr{H}_1 * \mathscr{G} + \mathscr{H}_2 * \mathscr{G} .
\end{align*}
For $\*$ similar.
\end{proof}

\begin{theorem}\label{diststrict}
Let $\mathscr{H}_1,~\mathscr{G}\in \mathscr{P}(R-Graphs_n)$ such that for all $H_1 \in \mathscr{H}_1,~ G \in \mathscr{G}$ holds $|s(\{H_1\}*\{G\})|=1$ then
\[
s((\mathscr{H}_1+\mathscr{H}_2)*\mathscr{G})= s(\mathscr{H}_1 * \mathscr{G}) + s(\mathscr{H}_2 * \mathscr{G})= s(\mathscr{H}_1 * \mathscr{G} +\mathscr{H}_2 * \mathscr{G}).
\]
\end{theorem}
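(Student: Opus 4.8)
\emph{Plan.} The statement bundles three sets of ring elements; writing $A=s((\mathscr{H}_1+\mathscr{H}_2)*\mathscr{G})$, $B=s(\mathscr{H}_1*\mathscr{G})+s(\mathscr{H}_2*\mathscr{G})$ and $C=s(\mathscr{H}_1*\mathscr{G}+\mathscr{H}_2*\mathscr{G})$, what must be shown is $A=B=C$. The plan is to dispose of $B=C$ with no hypothesis at all, to get $A\subseteq C$ straight from Theorem~\ref{Tad}, and then to spend the single-valuedness assumption $|s(\{H_1\}*\{G\})|=1$ on the only remaining inclusion $B\subseteq A$.

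First, $B=C$ simply says that $s$ carries the graph-$+$ to addition in $R$: for two graphs on a common vertex set $s(K_1+K_2)=\sum_{e}(v_{K_1}(e)+v_{K_2}(e))=s(K_1)+s(K_2)$, hence for any $\mathscr{X},\mathscr{Y}\in\mathscr{P}(R\text{-}Graphs_n)$ with a common vertex set $s(\mathscr{X}+\mathscr{Y})=\{\,s(X)+s(Y)\mid X\in\mathscr{X},\,Y\in\mathscr{Y}\,\}=s(\mathscr{X})+s(\mathscr{Y})$; applying this with $\mathscr{X}=\mathscr{H}_1*\mathscr{G}$, $\mathscr{Y}=\mathscr{H}_2*\mathscr{G}$ gives $B=C$. (This little additivity fact is worth recording as a separate lemma first.) Next, $A\subseteq C$ is immediate: Theorem~\ref{Tad} gives $(\mathscr{H}_1+\mathscr{H}_2)*\mathscr{G}\subseteq\mathscr{H}_1*\mathscr{G}+\mathscr{H}_2*\mathscr{G}$, and $s$, being applied set-wise, is monotone ($\mathscr{X}\subseteq\mathscr{Y}\Rightarrow s(\mathscr{X})\subseteq s(\mathscr{Y})$), so $A\subseteq C=B$. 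This direction needs nothing beyond Theorem~\ref{Tad}.

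It remains to prove $B\subseteq A$, and this is precisely Lemma~\ref{l-1} read through the definition of $\mathscr{H}*\mathscr{G}$. Unfolding the unions, an element of $B$ has the form $s(H_1*_f G)+s(H_2*_g G)$ with $H_i\in\mathscr{H}_i$, $G\in\mathscr{G}$, and $f,g$ pseudoorderings; because $|s(\{H_1\}*\{G\})|=1$ the first summand is unchanged when $f$ is replaced by $g$, so it equals $s(H_1*_g G)$, and then the pointwise identity $(H_1+H_2)*_g G=H_1*_g G+H_2*_g G$ from the proof of Lemma~\ref{l8} together with the additivity of $s$ gives $s(H_1*_g G)+s(H_2*_g G)=s((H_1+H_2)*_g G)\in A$. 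The point that needs care — and the step I expect to be the real obstacle — is exactly this re-alignment of data that on the right-hand side of $+$ are chosen independently for the two summands: to land inside $(\mathscr{H}_1+\mathscr{H}_2)*\mathscr{G}$ the two $\mathscr{H}$-blocks must be made to share one pseudoordering (this is what the hypothesis buys) and one $G\in\mathscr{G}$, so that they assemble into the single graph $(H_1+H_2)*_g G$. Once the quantifier bookkeeping of Definition~\ref{dprva} is arranged so that this alignment is legitimate, the computation collapses to the one-line manipulation already used in Lemmas~\ref{l8} and~\ref{l-1}.
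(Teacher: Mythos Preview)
Your overall plan is exactly the paper's: rerun the proof of Theorem~\ref{Tad} with Lemma~\ref{l-1} in place of Lemma~\ref{l8}. The parts $B=C$ and $A\subseteq C$ are clean and match the paper.

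The step $B\subseteq A$, however, has a genuine gap which you yourself flag but do not close --- and in fact cannot close, because the statement as written is false for non-singleton $\mathscr{G}$. When you unfold $B$ you write a generic element as $s(H_1*_f G)+s(H_2*_g G)$ with one common $G$, but the Minkowski sum $s(\mathscr{H}_1*\mathscr{G})+s(\mathscr{H}_2*\mathscr{G})$ allows two \emph{independent} choices $G,G'\in\mathscr{G}$. The hypothesis $|s(\{H_1\}*\{G\})|=1$ lets you realign the pseudoorderings for a fixed $G$, but gives no leverage when $G\neq G'$; your closing sentence about ``quantifier bookkeeping'' is precisely where the argument would have to produce something and does not. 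Concretely: take $n=2$, $R=\mathbb{Z}$, $\mathscr{H}_1=\mathscr{H}_2=\{I(K_2)\}$ and $\mathscr{G}=\{I(K_2),\,2\cdot I(K_2)\}$. The hypothesis holds trivially (a single edge, so every $s(\{H_1\}*\{G\})$ is a singleton), yet $A=s(\{2\cdot I(K_2)\}*\mathscr{G})=\{2,4\}$ while $B=\{1,2\}+\{1,2\}=\{2,3,4\}$.

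The paper's one-line proof (``similar as before, but we replace Lemma~\ref{l8} by~\ref{l-1}'') inherits the same defect, via the already-loose last ``$=$'' in the displayed chain of Theorem~\ref{Tad}. The equality \emph{is} correct whenever $|\mathscr{G}|=1$, which is the only way the theorem is actually used later in the paper; under that extra assumption your argument goes through verbatim.
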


\begin{proof}
The proof is similar as before, but we replace the lemma \ref{l8} by \ref{l-1}, where is equality. 
\end{proof}

\section{Connection to another topics}
In this section we are going to show how can be many standard concepts in graph theory be expressed in the language of combinatorial spectra.

\begin{theorem}\label{hamspec}
Let $H$ and $G$ be unweighted graphs such that $|V(G)|=|V(H)|$ then
\[
\mathscr{H}_{H}(G)=s(\{I(H)\}*\{\rho(G)\}).
\]
\end{theorem}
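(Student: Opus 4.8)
The plan is to unwind both sides of the claimed identity to the same set of real numbers, indexed by bijections $f:V(H)\to V(G)$. First I would recall the definitions: by Definition~\ref{dprva} and the subsequent definition of $*$ on sets, $\{I(H)\}*\{\rho(G)\}=\{I(H)*_f\rho(G)\mid f:V(G)\to V(H)\text{ bijection}\}$, so applying $s$ gives
\[
s(\{I(H)\}*\{\rho(G)\})=\bigl\{s(I(H)*_f\rho(G))\bigm| f\text{ a bijection}\bigr\}.
\]
On the other side, $\mathscr{H}_H(G)=\{s_H(f,G)\mid f\text{ a pseudoordering on }G\text{ by }H\}$, which by Definition~\ref{dstara} means $f:V(H)\to V(G)$ is a bijection and $s_H(f,G)=\sum_{\{x,y\}\in E(H)}\rho_G(f(x),f(y))$.

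Next I would compute $s(I(H)*_f\rho(G))$ for a fixed bijection. Since the two graphs are complete on the same vertex set, $E(I(H)*_f\rho(G))=E(I(H))=E(K_{|V(H)|})$, and the weight of an edge $e=\{x,y\}$ is $v_{I(H)}(e)\cdot v_{\rho(G)}(f(e))$. By Definition~\ref{dirho}, $v_{I(H)}(e)$ is $1$ when $e\in E(H)$ and $0$ otherwise, while $v_{\rho(G)}(\{f(x),f(y)\})=\rho_G(f(x),f(y))$. Hence
\[
s(I(H)*_f\rho(G))=\sum_{e\in E(K_n)}v_{I(H)}(e)\cdot v_{\rho(G)}(f(e))=\sum_{\{x,y\}\in E(H)}\rho_G(f(x),f(y))=s_H(f,G).
\]
So term by term the two generating sets agree.

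Finally I would address the only real subtlety, namely the direction of the bijections: in Definition~\ref{dprva} the bijection in $H*_f G$ runs $f:V(G)\to V(H)$, whereas in Definition~\ref{dstara} a pseudoordering runs $f:V(H)\to V(G)$. This is a harmless bookkeeping mismatch — replacing $f$ by $f^{-1}$ is a bijection of the index sets, and one checks that $v_{I(H)*_f\rho(G)}(\{x,y\})$ depends on $f$ only through the unordered pair $\{f(x),f(y)\}$, so reindexing does not change the resulting set of sums. (Alternatively one can invoke Lemma~\ref{l1}.) Once that is noted, the two sets of reals are literally equal, completing the proof. I expect this direction-of-bijection reconciliation to be the one place a careful reader would pause; the rest is a direct substitution of Definition~\ref{dirho} into the definition of $s$.
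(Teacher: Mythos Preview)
Your proposal is correct and follows exactly the approach the paper indicates: the paper's proof consists of the single sentence that it is ``a straight forward comparison of definitions~\ref{dstara}, \ref{dprva} and~\ref{dirho},'' and your write-up is precisely that comparison carried out in full. Your remark on the direction of the bijection is a legitimate bookkeeping observation (the paper itself is inconsistent between the definition of $*_f$ and Definition~\ref{dprva}), and your resolution via reindexing by $f\mapsto f^{-1}$ is the right one.
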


\begin{proof}
The proof is just a straight forward comparison of definitions \ref{dstara}, \ref{dprva} and \ref{dirho}.
\end{proof}

\begin{proposition}
Let $\mathscr{G}$ be a set of $R$-weighted complete graphs with the same number of vertices then 
\[
\langle \mathscr{G} \rangle = K_{n}* \mathscr{G},
\]  
where $\langle - \rangle$ is closure under isomorphisms introduced in definition \ref{dprva} and $n$ is a number of vertices of graphs in $\mathscr{G}$.
\end{proposition}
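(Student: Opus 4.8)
The plan is to reduce the statement to the case of a single graph and then simply unwind the definitions; the one idea that matters is that, for a fixed $R$-weighted complete graph $G$ on a vertex set $V$, the family $\{\,I(K_n)*_f G\mid f\colon V\to V\text{ a bijection}\,\}$ is nothing but the collection of all relabelings of $G$ living on $V$, i.e.\ the isomorphism closure of $\{G\}$ on that vertex set. (Throughout I read $K_n$ in the statement as the all-ones weighted complete graph $I(K_n)$ --- the multiplicative unit from the corollary above --- and take all graphs of $\mathscr{G}$ to carry the common vertex set $V$ with $|V|=n$.)

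First I would observe that both sides of the claimed identity are computed member by member on $\mathscr{G}$: from the definition of $*$ on sets, $\{I(K_n)\}*\mathscr{G}=\bigcup_{G\in\mathscr{G}}\{I(K_n)\}*\{G\}$, and since ``being isomorphic to some element of $\mathscr{G}$ on the ambient vertex set'' only refers to one element at a time, $\langle\mathscr{G}\rangle=\bigcup_{G\in\mathscr{G}}\langle\{G\}\rangle$. So it suffices to prove $\{I(K_n)\}*\{G\}=\langle\{G\}\rangle$ for a single $G$. For this I would compute, for any bijection $f\colon V\to V$, that $I(K_n)*_f G$ has vertex set $V$, has all pairs as edges, and has weight $v_{I(K_n)*_f G}(\{x,y\})=v_{I(K_n)}(\{x,y\})\cdot v_G(\{f(x),f(y)\})=v_G(\{f(x),f(y)\})$.

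From this formula both inclusions are immediate. For $\subseteq$: the formula says precisely that $f$ is an isomorphism $I(K_n)*_f G\to G$, so $I(K_n)*_f G$ is an $R$-weighted complete graph on $V$ isomorphic to $G$ and hence lies in $\langle\{G\}\rangle$; alternatively one may quote Lemma~\ref{l1} and Lemma~\ref{l3}, giving $I(K_n)*_f G\cong G*_{f^{-1}}I(K_n)=G$. For $\supseteq$: given $G'\in\langle\{G\}\rangle$, choose an isomorphism $\phi\colon G'\to G$ (a weight-preserving bijection of $V$); comparing with the displayed formula yields $G'=I(K_n)*_\phi G\in\{I(K_n)\}*\{G\}$. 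Assembling the two inclusions and taking the union over $G\in\mathscr{G}$ finishes the proof. I do not anticipate a genuine obstacle; the only point requiring attention is the vertex-set bookkeeping --- that $\langle-\rangle$ is closure under isomorphism \emph{on the given vertex set} and that $*_f$ retains the vertex set of its left argument --- since it is exactly this matching that turns ``all bijections $f$'' into ``all relabelings on $V$''.
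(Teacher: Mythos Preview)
Your proof is correct and follows essentially the same route as the paper. The paper compresses the entire argument into the single identity $K_n*_f G=f^{-1}(G)$, which is exactly your displayed weight computation; you have simply unpacked this formula, split it into the two inclusions, and made the vertex-set bookkeeping explicit.
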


\begin{proof}
proof follows from this equation
\[
K_{n}*_f G= f^{-1}(G).
\]
\end{proof}

\begin{theorem}
Let $G$ be an unweighted graph then 
\[
s(\mathscr{P}_{|V(G)|}* I(G))=\{ \text{cardinalities of pairings in the graph }G\},
\]
where \begin{align*}
\mathscr{P}_{|V(G)|} &= \left\{I(P_{|V(G)|,k})\middle| k \in \left\{0, \dots ,   \left\lfloor\frac{|V(G)|}{2}\right\rfloor  \right\}   \right\},\\
P_{|V(G)|,k}&=
 \bigsqcup_{i=1}^{k} \bigl(\xymatrix{ \cdot \ar@{-}[r] &\cdot  } \bigr) \sqcup \bigsqcup_{i=1}^{|V(G)|-2k} \cdot 
\end{align*}
\end{theorem}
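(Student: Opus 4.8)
The plan is to unwind both sides of the claimed equality and show they describe the same set of integers. Fix $n = |V(G)|$. First I would recall that $I(P_{n,k})$ is the $\mathbb{Z}$-weighted complete graph on $n$ vertices whose edge-weight function is the indicator of the $k$ disjoint edges forming the matching in $P_{n,k}$, and that $I(G)$ has edge-weights equal to the indicator of $E(G)$. By Lemma~\ref{l2}--\ref{l3} and the definition of $*_f$, for a bijection $f$ the graph $I(P_{n,k}) *_f I(G)$ has edge-weight function $e \mapsto v_{I(P_{n,k})}(e)\cdot v_{I(G)}(f(e))$, which is the indicator of exactly those $k$ matching-edges $e$ of $P_{n,k}$ whose image $f(e)$ lies in $E(G)$. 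Applying $s$ (the sum of all edge-weights) therefore yields $s\bigl(I(P_{n,k}) *_f I(G)\bigr) = \bigl|\{e \in M_k : f(e)\in E(G)\}\bigr|$, where $M_k$ is the fixed set of $k$ matching-edges of $P_{n,k}$.

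The second step is the set-theoretic bookkeeping. By the definitions of $\mathscr{P}_n$ and of $*$ on sets,
\[
s\bigl(\mathscr{P}_{n}* I(G)\bigr) = \Bigl\{\,\bigl|\{e \in M_k : f(e)\in E(G)\}\bigr| \;\Bigm|\; 0 \le k \le \lfloor n/2\rfloor,\ f : V(P_{n,k})\to V(G)\text{ bijection}\,\Bigr\}.
\]
I claim this set equals $\{\,|F| : F \text{ is a matching in } G\,\}$. For the inclusion $\subseteq$: given $k$ and $f$, the edges $\{f(e) : e \in M_k,\ f(e)\in E(G)\}$ are pairwise vertex-disjoint (since the $e\in M_k$ are disjoint and $f$ is injective) and lie in $E(G)$, hence form a matching in $G$ of the stated cardinality. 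For the inclusion $\supseteq$: given a matching $F = \{g_1,\dots,g_m\}$ in $G$ with $m \le \lfloor n/2\rfloor$, take $k = m$, and choose the bijection $f : V(P_{n,m})\to V(G)$ so that it carries the $i$-th matching-edge of $P_{n,m}$ onto $g_i$ for each $i$ (this is possible: match up the $2m$ matched endpoints with the $2m$ vertices covered by $F$, then biject the remaining $n-2m$ vertices arbitrarily). Then $\{e\in M_m : f(e)\in E(G)\} \supseteq M_m$, and in fact equals $M_m$, so $s\bigl(I(P_{n,m})*_f I(G)\bigr) = m = |F|$. Finally, $0 = |\varnothing|$ is covered by $k=0$, and every matching has size at most $\lfloor n/2\rfloor$, so the ranges match.

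I expect the only real subtlety — hardly an obstacle — to be the bound $m \le \lfloor n/2\rfloor$ making the construction in the $\supseteq$ direction legitimate: one must check that $P_{n,m}$ is actually in the index set $\mathscr{P}_n$, which holds precisely because a matching in an $n$-vertex graph covers at most $n$ vertices. Everything else is a direct translation through Definitions~\ref{dstara}--\ref{dirho} and the behaviour of $s$ under $*_f$ established above. One should also note $P_{n,k}$ is disconnected in general, so strictly one applies $I(\cdot)$ and $s(\cdot)$ componentwise, which is exactly what the displayed formulas do; no connectivity hypothesis on $P_{n,k}$ is needed since $I$ and $s$ as defined only reference the edge set.
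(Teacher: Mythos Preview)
Your proof is correct and follows essentially the same approach as the paper's: both directions match, with the $\supseteq$ direction identical (map the $k$ matching-edges of $P_{n,k}$ onto a given matching of size $k$), and your $\subseteq$ direction is the same observation as the paper's, phrased slightly more directly (you read off the matching $\{f(e):e\in M_k,\ f(e)\in E(G)\}$ immediately, while the paper first passes to an auxiliary pseudoordering $g$ on $P_{n,l}$ before naming the same matching). The references to Lemmas~\ref{l2}--\ref{l3} are unnecessary here, but harmless.
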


\begin{proof}
Let $\{e_1,\dots,e_k\}$ be a pairing in the graph $G$, then we can construct a pseudoordering $ f : P_{|V(G)|,k} \rightarrow V(G)$ such that each edge of $P_{|V(G)|,k}$ maps to $e_i$ for some $i$. For this $f$ we have 
\[
s(I(\mathscr{P}_{|V(G)|})*_f I(G))=k,
\]
hence 
\[
s(\mathscr{P}_{|V(G)|}* I(G))\supseteq\{ \text{cardinalities of pairings in the graph }G\}.
\]
Let $ f : P_{|V(G)|,k} \rightarrow V(G)$ be a pseudoordering, when we delete from $P_{|V(G)|,k}$ edges which are mapped to no-edges of $G$ we will get a pseudoordering 
$ g : P_{|V(G)|,l} \rightarrow V(G)$ such that 
\[
s(I(P_{|V(G)|,k})*_f I(G))= s(I(P_{|V(G)|,l})*_g I(G)).
\]
But now image of $g$ form a pairing in the graph $G$ and hence
\[
s(\mathscr{P}_{|V(G)|}* I(G))\subseteq\{ \text{cardinalities of pairings in the graph }G\}.
\]
\end{proof}

\begin{theorem}\label{thmdeg}
\[
s\left(S_{|V(G)|}*I(G)\right)=\{deg_G(v)| v\in V(G)\},
\]
where $S_{|V(G)|}=K_{1,|V(G)|-1}$ is a star with same number of vertices as the graph $G$ and $deg_G(v)$ is a degree of vertex $v$ in the graph $G$.
\end{theorem}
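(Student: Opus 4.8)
The plan is to mimic the structure of the preceding theorem about pairings, exploiting the fact that $S_{|V(G)|}=K_{1,|V(G)|-1}$ is a star, so a pseudoordering $f\colon S_{|V(G)|}\to V(G)$ is determined (up to relabelling of the leaves) by the image $f(c)$ of the center $c$. First I would recall from Theorem \ref{hamspec}-style unwinding of the definitions that
\[
s\bigl(S_{|V(G)|}*_f I(G)\bigr)=\sum_{e\in E(S_{|V(G)|})}v_{I(G)}(f(e))=\sum_{u\neq c}v_{I(G)}\bigl(\{f(c),f(u)\}\bigr),
\]
and that $v_{I(G)}(\{x,y\})$ is $1$ when $\{x,y\}\in E(G)$ and $0$ otherwise. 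Since $f$ is a bijection, as $u$ ranges over the leaves, $f(u)$ ranges over all vertices of $G$ other than $f(c)$, so the sum counts exactly the edges of $G$ incident to $f(c)$, i.e. it equals $\deg_G(f(c))$.

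For the inclusion $\subseteq$: any element of $s\left(S_{|V(G)|}*I(G)\right)$ has the form $s(S_{|V(G)|}*_f I(G))$ for some pseudoordering $f$, and by the computation above this equals $\deg_G(f(c))$, which lies in $\{\deg_G(v)\mid v\in V(G)\}$ since $f(c)\in V(G)$. For the reverse inclusion $\supseteq$: given any vertex $v\in V(G)$, choose any bijection $f\colon V(S_{|V(G)|})\to V(G)$ with $f(c)=v$ — such a bijection exists because $|V(S_{|V(G)|})|=|V(G)|$ — and then $s(S_{|V(G)|}*_f I(G))=\deg_G(v)$, so $\deg_G(v)$ is realized.

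The two inclusions together give the claimed equality of sets. I would also note explicitly that this uses $S_{|V(G)|}$ as an \emph{unweighted} graph coerced via $I(-)$, matching the convention in Theorem \ref{thmdeg}'s statement (where $S_{|V(G)|}$ really means $I(S_{|V(G)|})$, just as $I(G)$ appears on the left). There is no real obstacle here; the only point requiring a word of care is the observation that summing $v_{I(G)}$ over the star edges at a fixed center vertex reproduces the degree, which is immediate once the bijectivity of $f$ is invoked. The proof is therefore a short direct computation, and I would present it in the same terse style as the pairing theorem that precedes it.
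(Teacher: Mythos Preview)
Your proof is correct and follows essentially the same approach as the paper: both compute $s(S_{|V(G)|}*_f I(G))=\deg_G(f(c))$ by observing that the only nonzero contributions come from edges incident to the center $c$, and that $v_{I(G)}(\{f(c),f(u)\})$ is the indicator of $\{f(c),f(u)\}\in E(G)$. Your version is slightly more explicit in spelling out the two inclusions and the use of bijectivity, but the underlying argument is identical.
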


\begin{proof}
Let $f:S_{|V(G)|} \rightarrow I(G)$ be a pseudoordering, then 
\[
s\left(S_{|V(G)|}*_f I(G)\right)= deg_G(f(c)),
\]
where $c$ is a center of the star. Indeed, let $\{u,v\}\in E\left(S_{|V(G)|}*_f I(G)\right)$ then weight of this edge is zero if both vertices $u,v$ are different from vertex $c$.
And weight of edge \{c,v\} is
\begin{align*}
v_{S_{|V(G)|}*_f I(G)} \{c,v\} = \begin{cases}1 & \text{if }\{f(c),f(v)\}\in E(G) \\
0 & \text{if }\{f(c),f(v)\}\notin E(G).
\end{cases}
\end{align*}
\end{proof}

\begin{theorem}
$G$ is vertex $k$-colorable if and only if 
\[
\min s\left( \mathscr{H}_k * \{I(G)\}\right)= 0,
\]
where
\[
\mathscr{H}_k =\left\{    I\left(\bigsqcup_{i=1}^{k} K_{n_i}\right) \middle| n_1,\dots,n_k \in \mathbb{N}, \sum_{j=0}^k n_j = |V(G)|   \right\}. 
\]
\end{theorem}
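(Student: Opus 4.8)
The plan is to unwind the definitions of $*_f$, $s$, and $I$ and read off that, for $H=I\bigl(\bigsqcup_{i=1}^k K_{n_i}\bigr)$ and any bijection $f\colon V(H)\to V(G)$, the quantity $s(H*_f I(G))$ counts exactly the pairs of vertices lying in a common clique $K_{n_i}$ whose image under $f$ is an edge of $G$. Indeed $v_{H*_f I(G)}(e)=v_H(e)\cdot v_{I(G)}(f(e))$ equals $1$ when $e$ is an edge inside some $K_{n_i}$ \emph{and} $f(e)\in E(G)$, and $0$ otherwise, so summing over all edges gives precisely that count. In particular every weight of $H*_f I(G)$ lies in $\{0,1\}$, so $s\bigl(\mathscr{H}_k*\{I(G)\}\bigr)$ is a nonempty set of nonnegative integers and $\min s\bigl(\mathscr{H}_k*\{I(G)\}\bigr)=0$ if and only if the value $0$ is attained for some tuple $(n_1,\dots,n_k)$ and some bijection $f$.

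Next I would translate "the value $0$ is attained" into a colouring statement. Given such $(n_i)$ and $f$, the bijection transports the vertex partition $V(K_{n_1})\sqcup\dots\sqcup V(K_{n_k})$ to a partition $C_1\sqcup\dots\sqcup C_k$ of $V(G)$ with $|C_i|=n_i$; since all weights are nonnegative, $s(H*_f I(G))=0$ forces every clique edge to have weight $0$, i.e.\ no edge of $G$ joins two vertices of the same $C_i$. Thus each $C_i$ is independent in $G$, which is exactly a proper vertex $k$-colouring of $G$ (with colours allowed to be unused). Conversely, from a proper $k$-colouring with colour classes $C_1,\dots,C_k$ put $n_i=|C_i|$, take $H=I\bigl(\bigsqcup_i K_{n_i}\bigr)\in\mathscr{H}_k$, and let $f$ be any bijection sending $V(K_{n_i})$ onto $C_i$; then each clique edge maps to a non-edge of $G$ while non-clique edges already carry weight $0$, so $s(H*_f I(G))=0$, and since this value is always $\ge 0$ the minimum is $0$. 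Combining the two directions with the first paragraph yields the equivalence.

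The only point requiring a little care is the bookkeeping around empty colour classes: a proper $k$-colouring need not use all $k$ colours, so some $n_i$ may equal $0$ and one must allow $K_0$ (the empty graph) as a summand, which matches the convention $n_i\in\mathbb{N}$ in the definition of $\mathscr{H}_k$; in the other direction one notes that $\mathscr{H}_k$ already ranges over \emph{all} such tuples, so no genuine obstruction arises (and if one insists on nonempty classes, when $|V(G)|\ge k$ one may refine any colouring to use exactly $k$ colours). Beyond this, there is no real difficulty: the proof is entirely a comparison of definitions together with the elementary observation that a sum of nonnegative terms vanishes iff each term does, in the same spirit as Theorem~\ref{hamspec} and Theorem~\ref{thmdeg}.
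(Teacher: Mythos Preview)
Your proposal is correct and follows essentially the same route as the paper: both directions are obtained by unwinding the definitions of $I$, $*_f$, and $s$, observing that $s\bigl(I(\bigsqcup_i K_{n_i})*_f I(G)\bigr)$ counts edges of $G$ inside the colour classes determined by $f$, and using that a sum of nonnegative terms vanishes iff each term does. You are in fact a bit more explicit than the paper on two points (nonnegativity of the summands, and the bookkeeping for empty colour classes), but there is no genuine methodological difference.
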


\begin{proof}
Let we have a proper $k$-coloring $g: V(G) \rightarrow \{1,\dots,k\}$ of the graph $G$, let $n_i=|g^{-1}(i)|$ be a number of vertices of color $i$. Take an arbitrary bijection 
\[
f_i:V(K_{n_i}) \rightarrow g^{-1}(i),
\]
for all $i \in \{1,\dots,k\}$.
Then we define a bijection
\[
f=\bigsqcup_{i=1}^{k}f_i:\bigsqcup_{i=1}^{k} K_{n_i} \rightarrow   V(G),
\]
this pseudoordering satisfy 
\[
s\left( I\left(\bigsqcup_{i=1}^{k} K_{n_i}\right) *_f I(G)\right)= 0.
\]
Indeed, let $\{v,u\} \in E(\bigsqcup_{i=1}^{k} K_{n_i} )$, then $f(v),f(u)$ have the same color in coloring $g$ hence there is not an edge $\{f(v),f(u)\}$, this means that all nonzero edges in 
$I(\bigsqcup_{i=1}^{k} K_{n_i})$ maps to zero by $f$. 

Let now 
\[
\min s\left( \mathscr{H}_k * \{I(G)\}\right)= 0,
\]
this means that there exists a pseudoordering $f$ and a decomposition $\sum_{j=0}^k n_j = |V(G)|$ such that 
\[
s\left( I\left(\bigsqcup_{i=1}^{k} K_{n_i}\right) *_f I(G)\right)= 0.
\]
From this we can define a coloring similar as in the opposite direction.  
\end{proof}

\begin{definition}
Let $a \in R$ then we define 
\[
U_a: R-Graph_n \rightarrow Graph_n,
\]
defined as follows, for graph $G$ with weight function $v_G$, we define unweighted graph $U_a(G)$,
\[
V(U_a(G))=V(G),~E(U_a(G))=\left\{e\in E(G) \middle|  v_G (e)=a   \right\}.
\]

\end{definition}

Following lemmas will be used for description of edge colorability and edge connectivity.

\begin{lemma}\label{l4}
Let $G\neq K_{|V(G)|}$ be a unweighted graph then 
\[
I(G)*I(K_{|V(G)|}\setminus e)
\]
is equal to $I$ of a set of all graphs obtainable from $G$ by deleting at most 1 edge.
\end{lemma}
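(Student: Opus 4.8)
The plan is to compute the weight function of a single product $I(G)*_f I(K_{|V(G)|}\setminus e)$ straight from the definitions and then to let the bijection $f$ vary. Write $n=|V(G)|$. For a bijection $f$ between the two vertex sets and a pair $p$ of distinct vertices,
\[
v_{I(G)*_f I(K_n\setminus e)}(p)=v_{I(G)}(p)\cdot v_{I(K_n\setminus e)}(f(p)),
\]
where it is worth keeping in mind that $I(G)$ and $I(K_n\setminus e)$ are \emph{complete} weighted graphs, so every $p$ is an edge, possibly of weight $0$. Both factors lie in $\{0,1\}$, so the product has all weights in $\{0,1\}$ and hence equals $I(G_f)$ for the unweighted graph $G_f$ with $E(G_f)=\{\,p\in E(G)\mid f(p)\neq e\,\}$. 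Letting $p_0$ be the unique pair with $f(p_0)=e$, this says $G_f=G$ when $p_0\notin E(G)$ and $G_f=G\setminus p_0$ when $p_0\in E(G)$; in either case $G_f$ arises from $G$ by deleting at most one edge, which gives the inclusion ``$\subseteq$''.

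For the reverse inclusion I would observe that as $f$ ranges over all bijections, the pair $p_0=f^{-1}(e)$ ranges over \emph{all} pairs of distinct vertices of $V(G)$: given any target pair $p_0$ and the two endpoints of $e$, any bijection sending the endpoints of $p_0$ to those of $e$ extends to a bijection of the full vertex sets. Choosing $p_0$ to be a prescribed edge $e'\in E(G)$ realizes $I(G\setminus e')$, and --- this is the single place where the hypothesis $G\neq K_{|V(G)|}$ is used --- choosing $p_0$ to be a non-edge of $G$, which exists precisely because $G$ is not complete, realizes $I(G)$ itself. Hence every graph obtainable from $G$ by deleting at most one edge (including $G$ with no deletion) occurs in the product, and the two sides coincide.

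I expect no real obstacle: the whole thing is a comparison of definitions in the spirit of Theorems \ref{hamspec} and \ref{thmdeg}. The only points to keep straight are that here ``deleting an edge of the underlying graph'' literally means ``setting one weight to $0$'', and that without the assumption $G\neq K_{|V(G)|}$ the statement would fail --- for $G=K_n$ every choice of $p_0$ is an edge, so exactly one edge is always deleted and $I(G)$ itself never appears in the product.
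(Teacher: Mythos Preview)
Your argument is correct and follows the same idea as the paper's proof --- the paper's version is a one-line sketch (``In a pseudoordering some edge $e'$ of $G$ is mapped to the edge $e$ and this pseudoordering gives us a graph $G\setminus e'$''), while you have spelled out both inclusions and explicitly located where the hypothesis $G\neq K_{|V(G)|}$ is used.
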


\begin{proof}
In a pseudoordering is some edge $e'$ of $G$ mapped to the edge $e$ and this pseudoordering gives us a graph $G \setminus e'$.   
\end{proof}

\begin{lemma}\label{l5}
Let $G\neq K_{|V(G)|}$ be a unweighted graph then 
\[
I(G)*I(K_{|V(G)|}\setminus e)^{*k}
\]
is equal to $I$ of set of all graphs obtainable from $G$ by deleting at most $k$ edges. Where 
\[
H^{*k}=\underbrace{H*\dots *H}_k
.\]
\end{lemma}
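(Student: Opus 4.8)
The plan is to prove Lemma~\ref{l5} by induction on $k$, using Lemma~\ref{l4} as the base case and the associativity of $*$ (the first Theorem of the algebraic section) to peel off one factor at a time. Write $H = I(K_{|V(G)|}\setminus e)$, so that $H^{*k} = H^{*(k-1)} * H$ and, by associativity, $I(G) * H^{*k} = \bigl(I(G) * H^{*(k-1)}\bigr) * H$.

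For the induction step I would argue as follows. By the inductive hypothesis, $I(G) * H^{*(k-1)}$ equals $I$ applied to the set $\mathscr{S}_{k-1}$ of all graphs obtainable from $G$ by deleting at most $k-1$ edges. Then $\bigl(I(G) * H^{*(k-1)}\bigr) * H = \bigcup_{G' \in \mathscr{S}_{k-1}} I(G') * H$. Now one must be slightly careful: Lemma~\ref{l4} is stated for $G' \neq K_{|V(G')|}$, but in our situation $G'$ is obtained from $G \neq K_{|V(G)|}$ by deleting edges, so $G'$ is never complete and the lemma applies directly; it gives that $I(G') * H$ is $I$ of the set of graphs obtainable from $G'$ by deleting at most one edge. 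Taking the union over all $G' \in \mathscr{S}_{k-1}$, we get $I$ of the set of all graphs obtainable from some element of $\mathscr{S}_{k-1}$ by deleting at most one more edge, which is exactly the set of graphs obtainable from $G$ by deleting at most $k$ edges. (One should also note the harmless degenerate case where $G$ itself has fewer than $k$ edges, in which case "delete at most $k$ edges" just means "delete any subset of edges", and the argument still goes through.)

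The main obstacle I anticipate is bookkeeping rather than conceptual: one must check that the operation $*$ really does commute with the union defining $\mathscr{H} * \mathscr{G}$ for \emph{sets} of graphs — i.e. that $\bigl(\bigcup_i I(G_i)\bigr) * H = \bigcup_i \bigl(I(G_i) * H\bigr)$ — which is immediate from the definition of $*$ on $\mathscr{P}(R\text{-}Graphs_n)$, and that "at most $k-1$ edges then at most $1$ more" composes to "at most $k$ edges" with no gaps or overcounting (every graph obtained by deleting $j \le k$ edges can be reached by first deleting $j$ of them, or $j-1$ of them and then the last, so it lies in the union for either parity of intermediate step). Since everything is phrased in terms of isomorphism-closed sets via $I$, and deleting an unordered set of edges is manifestly invariant under relabeling vertices, no isomorphism subtleties arise beyond those already handled in Lemma~\ref{l4}.

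A short alternative, if one prefers a non-inductive phrasing, is to observe directly that in a choice of $k$ pseudoorderings realizing the $k$-fold product, each copy of the missing edge $e$ pulls back to some edge of the current graph (or to a non-edge, in which case nothing is deleted), so after all $k$ steps at most $k$ edges of $G$ have been zeroed out; conversely any prescribed set of at most $k$ edges of $G$ can be deleted by choosing the pseudoorderings so that the $i$-th copy of $e$ lands on the $i$-th chosen edge. This makes both inclusions transparent, but the inductive route is cleaner to write since it simply invokes Lemma~\ref{l4} and associativity, so that is the version I would carry out.
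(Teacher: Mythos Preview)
Your proposal is correct and follows exactly the paper's approach: induction on $k$, peeling off one factor via associativity and applying Lemma~\ref{l4} at each step. The paper presents this in three lines without the extra care you take (e.g.\ noting $G'\neq K_{|V(G')|}$ and that $*$ distributes over unions), so your write-up is simply a more detailed version of the same argument.
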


\begin{proof}
Proof is given by iteration of the previous lemma (\ref{l4}).
\begin{align*}
I(G)*I(K_{|V(G)|}\setminus e)^{*k} &= I(G)*I(K_{|V(G)|}\setminus e)^{*(k-1)}*I(K_{|V(G)|}\setminus e)\\
&=\{G \setminus \leq k -1 \text{ edges}\}*I(K_{|V(G)|}\setminus e)\\
&=\{G \setminus \leq k  \text{ edges}\}.
\end{align*}
\end{proof}

\begin{definition}
Let $\mathscr{G} \in \mathscr{P}(R-Graphs_n)$
if there exists $n \in \mathbb{N}$ such that for all $n\leq k\in \mathbb{N}$
\[
\mathscr{G}^{*k}=\mathscr{G}^{*n},
\]
we will denote 
\[
\mathscr{G}^{*\infty}=\mathscr{G}^{*n}.
\]
This is a standard notation from the theory of semigroups.
Also we will define for $G \in R-Graphs_n$
\[
G^{*\infty}=\{G\}^{*\infty}.
\]
\end{definition}

\begin{lemma}\label{l6}
\[
I(K_{|V(G)|}\setminus e)^{*\infty} \cup \left\{I(K_{|V(G)|})\right\}
\]
is equal to $I$ of set of all graphs with given number of vertices.
\end{lemma}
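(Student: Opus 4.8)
The plan is to iterate Lemma~\ref{l5} until the $*$-powers of $I(K_{|V(G)|}\setminus e)$ stabilise, to recognise the stable value as ``$I$ of every non-complete graph on $|V(G)|$ vertices'', and then to note that the unique graph left out is $K_{|V(G)|}$, which is precisely what is adjoined. Throughout I would write $N=|V(G)|$ to avoid the clash with the index $n$ appearing in the definition of $\mathscr{G}^{*\infty}$, and note that the presence of the edge $e$ forces $N\ge 2$, so in particular $K_N\setminus e\neq K_N$.

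First I would apply Lemma~\ref{l5} with its graph ``$G$'' taken to be $K_N\setminus e$, which is legitimate by the previous remark. By associativity of $*$ one has $I(K_N\setminus e)^{*m}=\{I(K_N\setminus e)\}*I(K_N\setminus e)^{*(m-1)}$ for every $m\ge 1$, so Lemma~\ref{l5} identifies $I(K_N\setminus e)^{*m}$ with $I$ of the set of all graphs obtainable from $K_N\setminus e$ by deleting at most $m-1$ edges. Since $K_N\setminus e$ has only $\binom{N}{2}-1$ edges, as soon as $m-1\ge\binom{N}{2}-1$ this is just the family of all spanning subgraphs of $K_N\setminus e$; hence the sequence $\bigl(I(K_N\setminus e)^{*m}\bigr)_m$ is constant for $m\ge\binom{N}{2}$, so $I(K_N\setminus e)^{*\infty}$ is well defined and equals $I$ of the set of all spanning subgraphs of $K_N\setminus e$.

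Next I would identify that family up to relabelling: a graph $H$ on $N$ vertices is isomorphic to a spanning subgraph of $K_N\setminus e$ exactly when it has at least one non-adjacent pair, i.e.\ exactly when $H\neq K_N$ (if $\{u,v\}\notin E(H)$, relabel $H$ so that $\{u,v\}$ becomes $e$; conversely $K_N$ has no non-edge and so is not a subgraph of $K_N\setminus e$). Therefore $I(K_N\setminus e)^{*\infty}$ is $I$ of all non-complete graphs on $N$ vertices, and adjoining $\bigl\{I(K_N)\bigr\}$ yields $I$ of all graphs on $N$ vertices, which is the assertion.

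The one step that deserves care — and the only content beyond Lemma~\ref{l5} and the finiteness of $E(K_N\setminus e)$ — is the bookkeeping up to isomorphism: on the nose, \emph{every} graph occurring in any $*$-power of $I(K_N\setminus e)$ still has $e$ as a non-edge, so the asserted equality must be read after closing both sides under $\langle-\rangle$, i.e.\ as an equality of isomorphism types. Granting that, the argument above is complete.
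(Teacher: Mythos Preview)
Your argument is correct and follows essentially the same route as the paper: both use Lemma~\ref{l5} to identify $I(K_{|V(G)|}\setminus e)^{*\infty}$ with $I$ of the graphs obtainable from $K_{|V(G)|}\setminus e$ by deleting arbitrarily many edges, and then observe that the only missing graph is the complete one. Your version is in fact more careful than the paper's---you explicitly verify that the $*$-powers stabilise (so that $*\infty$ is well defined) and you flag that the equality only holds after passing to isomorphism classes, a point the paper leaves implicit.
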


\begin{proof}
\[
I(K_{|V(G)|}\setminus e)^{*\infty}=I(K_{|V(G)|}\setminus e)*I(K_{|V(G)|}\setminus e)^{*\infty},
\]
hence by lemma \ref{l5} it is equal to $I$ of set of all graphs obtainable from $K_{|V(G)|}\setminus e$ by deleting at most $\infty$ edges. Then it is any graph up to $I(K_{|V(G)|})$.
\end{proof}

\begin{lemma}\label{l7}
\[I(G)*\left(\sum_{i=1}^{k-1} \left(I(K_{|V(G)|}\setminus e)^{*\infty} \cup \left\{I(K_{|V(G)|})\right\}\right)+ I(K_{|V(G)|}) \right)\]
is equal to a set of all $k$-colorings (possibly non-proper) of the graph $G$ where we add zeros to non-edges of $G$. 
\end{lemma}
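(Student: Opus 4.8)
The plan is to unwind the expression from the outside in, so that everything rests on Lemma~\ref{l6}. Put $n=|V(G)|$ and $\mathscr{A}=I(K_n\setminus e)^{*\infty}\cup\{I(K_n)\}$; all of the objects below live on the vertex set $V(G)$, so every sum $+$ occurring is defined. By Lemma~\ref{l6}, $\mathscr{A}$ is exactly the set of all $\{0,1\}$-weighted complete graphs on $V(G)$, i.e.\ $\mathscr{A}=\{I(H)\mid H\text{ a graph on }V(G)\}$, and the argument of $I(G)*(-)$ is $\mathscr{B}:=\sum_{i=1}^{k-1}\mathscr{A}+\{I(K_n)\}$.

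First I would identify $\mathscr{B}$ explicitly. An element of $\mathscr{B}$ has the form $I(H_1)+\dots+I(H_{k-1})+I(K_n)$, and its weight on an edge $e$ of $K_n$ is $1+\#\{i\mid e\in E(H_i)\}\in\{1,\dots,k\}$; conversely, given any $w\colon E(K_n)\to\{1,\dots,k\}$, letting $H_i$ be the graph on $V(G)$ with edge set $\{e\mid w(e)\ge i+1\}$ realizes $w$ in this way. Hence $\mathscr{B}$ is precisely the set of $R$-weighted complete graphs on $V(G)$ all of whose weights lie in $\{1,\dots,k\}$. (For $k=1$ the empty sum is the all-zero weighting and $\mathscr{B}=\{I(K_n)\}$, which still fits this description.)

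Next I would compute $I(G)*\mathscr{B}$. For a bijection $f$ and $W\in\mathscr{B}$, the weight of $I(G)*_f W$ on an edge $e=\{x,y\}$ is $v_{I(G)}(e)\cdot v_W(\{f(x),f(y)\})$, which is $0$ when $e\notin E(G)$ and lies in $\{1,\dots,k\}$ when $e\in E(G)$; so every element of $I(G)*\mathscr{B}$ is a weighting that vanishes on the non-edges of $G$ and takes a value from $\{1,\dots,k\}$ on each edge of $G$, that is, a (possibly non-proper) $k$-coloring of $G$ with zeros adjoined on the non-edges. For the reverse inclusion it is enough to take $f=\mathrm{id}$: as $W$ runs over $\mathscr{B}$, the restriction $v_W|_{E(G)}$ runs over all maps $E(G)\to\{1,\dots,k\}$, so every such coloring is attained. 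The two inclusions give the statement.

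The whole argument is routine once Lemma~\ref{l6} is available; the only place needing attention is the middle step, namely seeing that the $k-1$ copies of $\mathscr{A}$ together produce exactly the edge-multiplicities $0,\dots,k-1$ and that the extra summand $\{I(K_n)\}$ shifts these to $1,\dots,k$ — precisely the range that keeps the weight $0$ free to mark the non-edges while leaving $k$ usable colors on the edges.
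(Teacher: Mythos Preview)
Your proof is correct and follows essentially the same route as the paper: apply Lemma~\ref{l6} to identify the union as all $\{0,1\}$-weightings, sum $k-1$ copies to get all $\{0,\dots,k-1\}$-weightings, shift by $I(K_n)$ to reach $\{1,\dots,k\}$, and then multiply by $I(G)$. You are more explicit than the paper in checking both inclusions at each stage (in particular the reverse inclusion at the end via $f=\mathrm{id}$), but the structure is the same.
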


\begin{proof}
\begin{align*}
\sum_{i=1}^{k-1} &\left(I(K_{|V(G)|}\setminus e)^{*\infty} \cup \left\{I(K_{|V(G)|})\right\}\right)\\
\overset{L\ref{l6}}{=}&\sum_{i=1}^{k-1}\left( \text{set of all graphs weighted by 0 and 1}              \right)\\
\overset{\phantom{L\ref{l6}}}{=} &(\text{set of all graphs weighted by }0,\dots,k-1),
\end{align*}
hence 
\begin{align*}
&I(G)*\left(\sum_{i=1}^{k-1} \left(I(K_{|V(G)|}\setminus e)^{*\infty} \cup \left\{I(K_{|V(G)|})\right\}\right)+ I(K_{|V(G)|}) \right)\\
=&I(G)*\left((\text{set of all graphs weighted by }0,\dots,k-1)+I(K_{|V(G)|}) \right)\\
=&I(G)*\phantom{(}(\text{set of all graphs weighted by }1,\dots,k)\\
=&(\text{set of all }k\text{-colorings of the graph } G).
\end{align*}
\end{proof}

\begin{theorem}
$G$ is edge-$k$ colorable if and only if
\[
\exists H \in I(G)*\sum_{i=1}^{k-1} \left( \left(I(K_{|V(G)|}\setminus e)^{*\infty} \cup \left\{I(K_{|V(G)|})\right\}\right) + I(K_{|V(G)|}) \right)
\]
such that
\[
\forall a \in \{1,\dots,k\} :~ s(I(U_a(H))*S_{|V(G)|})\leq 1.
\]
\end{theorem}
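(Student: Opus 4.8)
The plan is to strip away the two substantive ingredients, both of which are already established earlier in the paper, and then reduce the statement to the elementary fact that a $k$-edge-coloring is proper precisely when every color class is a matching. The first step is to invoke Lemma~\ref{l7} to identify the set
\[
\mathscr{C}:=I(G)*\sum_{i=1}^{k-1} \left( \left(I(K_{|V(G)|}\setminus e)^{*\infty} \cup \left\{I(K_{|V(G)|})\right\}\right) + I(K_{|V(G)|}) \right)
\]
with the set of all (possibly non-proper) $k$-colorings of the edges of $G$, encoded as $R$-weighted complete graphs on $V(G)$ in which every edge of $G$ carries a weight in $\{1,\dots,k\}$, namely its color, and every non-edge carries weight $0$. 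Hence, for $H\in\mathscr{C}$ and $a\in\{1,\dots,k\}$, the unweighted graph $U_a(H)$ is exactly the color-$a$ class of the corresponding coloring; in particular $|V(U_a(H))|=|V(G)|$, so $U_a(H)$ and $S_{|V(G)|}$ have equally many vertices and the product $I(U_a(H))*S_{|V(G)|}$ is defined.

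The second step is to evaluate $s\!\left(I(U_a(H))*S_{|V(G)|}\right)$. By Lemma~\ref{l1} the graphs $I(U_a(H))*_f S_{|V(G)|}$ and $S_{|V(G)|}*_{f^{-1}} I(U_a(H))$ are isomorphic, so they share the same value of $s$; therefore $s\!\left(I(U_a(H))*S_{|V(G)|}\right)=s\!\left(S_{|V(G)|}*I(U_a(H))\right)$, which by Theorem~\ref{thmdeg} equals $\{deg_{U_a(H)}(v)\mid v\in V(G)\}$. So the condition ``$s\!\left(I(U_a(H))*S_{|V(G)|}\right)\le 1$'', read as a bound on every element of this finite subset of $R$, says exactly that every vertex of $G$ is incident to at most one edge of color $a$, i.e. that the color class $a$ is a matching of $G$.

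With these translations both implications are immediate. For the forward direction, if $G$ is edge-$k$ colorable, fix a proper $k$-edge-coloring; by Lemma~\ref{l7} it corresponds to some $H\in\mathscr{C}$, and properness means each color class $U_a(H)$ is a matching, so $s\!\left(I(U_a(H))*S_{|V(G)|}\right)\subseteq\{0,1\}$ for every $a$. Conversely, if some $H\in\mathscr{C}$ satisfies the degree condition for all $a$, then by Lemma~\ref{l7} $H$ is a $k$-edge-coloring of $G$, and by the computation above each of its color classes is a matching; hence the coloring is proper and $G$ is edge-$k$ colorable.

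I do not expect a genuine obstacle here: the argument is essentially bookkeeping, and the only points requiring care are justifying the interchange of the two factors in $I(U_a(H))*S_{|V(G)|}$ relative to the order used in Theorem~\ref{thmdeg} (handled by Lemma~\ref{l1}), spelling out that ``$s(\cdots)\le 1$'' means $\max s(\cdots)\le 1$, and confirming that the displayed product is indeed the set described in Lemma~\ref{l7}, i.e. reading the placement of the summand $I(K_{|V(G)|})$ so that Lemma~\ref{l7} applies verbatim.
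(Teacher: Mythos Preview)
Your proposal is correct and follows essentially the same approach as the paper: invoke Lemma~\ref{l7} to interpret the displayed set as all (possibly improper) $k$-edge-colorings of $G$, and then use Theorem~\ref{thmdeg} to read the condition $s(I(U_a(H))*S_{|V(G)|})\le 1$ as the statement that each color class has maximum degree at most $1$. Your write-up is in fact more careful than the paper's, since you explicitly justify via Lemma~\ref{l1} the swap of the two factors needed to match the order in Theorem~\ref{thmdeg}, and you flag the placement of the $I(K_{|V(G)|})$ summand relative to the sum so that Lemma~\ref{l7} applies verbatim.
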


\begin{proof}
By lemma \ref{l7} we have that condition 
\[
\exists H \in I(G)*\sum_{i=1}^{k-1} \left( \left(I(K_{|V(G)|}\setminus e)^{*\infty} \cup \left\{I(K_{|V(G)|})\right\}\right) + I(K_{|V(G)|}) \right),
\]
means there exists a coloring of the graph $G$. By the theorem \ref{thmdeg}
\[
 s(I(U_a(H))*S_{|V(G)|})\leq 1,
 \]
is equivalent to the fact that degree of any vertex of $U_a(H)$ is at most $1$ for $a\in \{1,\dots,k\}$. Hence second condition says that the coloring is proper.
\end{proof}

\begin{theorem}
$G$ has friendly bisection if and olny if 
\[
\exists H \in   \left\{   I\left(K_{|V(G)|} \right)- 2\cdot I\left(K_{\frac{|V(G)|}{2},\frac{|V(G)|}{2}} \right)    \right\} * \{I(G)\}   
\]
such that
\[
s\left(H * S_{|V(G)|}\right)\geq 0.
\]
\end{theorem}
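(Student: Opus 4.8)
The plan is to unwind every definition and reduce the statement to the combinatorial notion of a friendly bisection: a partition $V(G)=A\sqcup B$ with $|A|=|B|=\tfrac{|V(G)|}{2}$ in which every $w\in A$ has at least as many neighbours in $A$ as in $B$, and symmetrically every $w\in B$ has at least as many neighbours in $B$ as in $A$. Write $n=|V(G)|$, assumed even (otherwise $I(K_{n/2,n/2})$, and hence the statement, is undefined), and fix on the common vertex set of $I(K_n)$ and $I(K_{n/2,n/2})$ a balanced bipartition $A_0\sqcup B_0$ defining that copy of $K_{n/2,n/2}$. First I would read off, from Definition \ref{dirho} and the definition of $+$, the weight function of $W:=I(K_n)-2\cdot I(K_{n/2,n/2})$: an edge $\{u,v\}$ has weight $1-2\cdot 0=1$ if $u,v$ lie on the same side of $A_0\sqcup B_0$, and $1-2\cdot1=-1$ if they lie on opposite sides.

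Next I would describe a general element $H$ of $\{W\}*\{I(G)\}=W*I(G)$. For a bijection $f$ identifying $V(W)$ with $V(G)$, the graph $H=W*_f I(G)$ lives on $V(W)$ and assigns to $\{u,v\}$ the weight $v_W(\{u,v\})$ if $\{f(u),f(v)\}\in E(G)$ and $0$ otherwise; so it carries $\pm 1$ on the edges whose $f$-image is an edge of $G$ (the sign depending on the side of $A_0\sqcup B_0$) and $0$ elsewhere. Then, by the same computation as in Theorem \ref{thmdeg} (multiplying by the weighted star $S_n$ with centre placed at a vertex $u_0$ annihilates every edge not incident to $u_0$ and leaves the rest with their $H$-weight, and $s$ sums these), $s(H*S_n)=\bigl\{\,\sum_{v\ne u_0}v_H(\{u_0,v\})\ \bigm|\ u_0\in V(H)\,\bigr\}$. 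Put $A=f(A_0)$, $B=f(B_0)$, a balanced bipartition of $V(G)$, and $w=f(u_0)$. Splitting the degree sum of $H$ at $u_0$ over the two sides of $A_0\sqcup B_0$ gives $\deg_A(w)-\deg_B(w)$ when $u_0\in A_0$ and $\deg_B(w)-\deg_A(w)$ when $u_0\in B_0$: the same-side edges of $W$ contribute $+1$ each, counting the neighbours of $w$ on $w$'s own side, and the cross-side edges contribute $-1$ each, counting the neighbours on the other side.

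Finally I would assemble the equivalence. Reading "$s(H*S_n)\ge 0$" as "$\min s(H*S_n)\ge 0$" (the convention already used in the colouring theorems above), the condition "$\exists H\in\{W\}*\{I(G)\}$ with $s(H*S_n)\ge 0$" unwinds, by the previous paragraph, to: there is a bijection $f$ such that, with $A=f(A_0)$ and $B=f(B_0)$, one has $\deg_A(w)\ge\deg_B(w)$ for all $w\in A$ and $\deg_B(w)\ge\deg_A(w)$ for all $w\in B$. Since the map $f\mapsto(f(A_0),f(B_0))$ is onto the set of ordered balanced bipartitions of $V(G)$, this says exactly that $G$ admits a friendly bisection, and both implications follow at once (conversely, from a friendly bisection $A\sqcup B$ take any bijection $f$ with $f(A_0)=A$ and $f(B_0)=B$). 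I do not expect a genuine obstacle here — the argument is purely a chase through the definitions — so the only points to watch are the sign bookkeeping, namely that the "$+1$ within a side, $-1$ across" weights of $W$ turn the friendly inequalities precisely into "$\ge 0$", and the standing parity assumption on $n$.
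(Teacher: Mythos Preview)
Your proof is correct and follows essentially the same route as the paper's: compute the $\pm 1$ weights of $W=I(K_n)-2I(K_{n/2,n/2})$, identify each $H=W*_f I(G)$ with the signed adjacency coming from the balanced bipartition $A=f(A_0),\,B=f(B_0)$, and then use the star computation from Theorem~\ref{thmdeg} to see that $s(H*S_n)$ records exactly the quantities $\deg_A(w)-\deg_B(w)$ (or the reverse) over all vertices $w$. Your bookkeeping with vertex sets is in fact a bit more explicit than the paper's, which tacitly transports $W*_f I(G)$ to $V(G)$ and writes it as $I(G)_A$; the content is the same.
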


\begin{proof}
\[
\left\{   I\left(K_{|V(G)|} \right)- 2\cdot I\left(K_{\frac{|V(G)|}{2},\frac{|V(G)|}{2}} \right)    \right\} * \{I(G)\}
\]
is the set of all weighted graphs obtainable from $I(G)$ by given process, let $A\subseteq V(G)$ be an arbitrary subset. If we now denote $I(G)_A$ a graph given by changing all signs of edges of $I(G)$ between set $A$ and $V(G) \setminus A$. The last combinaorial spectra is equal to 
\[
\left\{I(G)_A\middle||A|=\frac{|V(G)|}{2}\right\}.
\] 
Let $A\subseteq V(G)$ be a subset and we denote $B=A\subseteq V(G)$. Now we will compute a combinatorial spectrum $s\left( I(G)_A* S_{|V(G)|}\right)$. 
\[
s\left( I(G)_A*_f S_{|V(G)|}\right)= \begin{cases}
|N(f^{-1}(c)) \cap A| - |N(f^{-1}(c)) \cap B| & \text{if } f^{-1}(c) \in A\\
|N(f^{-1}(c)) \cap B| - |N(f^{-1}(c)) \cap A| & \text{if } f^{-1}(c) \in B,
\end{cases}
\]
where $N(v)\subseteq V(G)$ is the neighborhood of the vertex $v$ and $c$ is a center of the star $S_{|V(G)|}$.

Let now $H \in \left\{I(G)_A\middle||A|=\frac{|V(G)|}{2}\right\}$, such that  
\[
s\left(H * S_{|V(G)|}\right)\geq 0.
\]
By previous result it is equivalent to $A,V(G) \setminus A$ being a friendly bisection of $G$.
\end{proof}

\begin{lemma}\label{l9}
Let $G$ be unweighted graph, then $
G$ is edge connected if and only if 
\[
\min \left(s\left( \mathscr{H}_{|V(G)|}* I(G)\right)\right)\geq 1,
\]
where
\[
\mathscr{H}_{|V(G)|}=\left\{I(K_{n,|V(G)|-n}) ~\middle|~ 1\leq n \leq|V(G)|-1 \right\}.
\]
\end{lemma}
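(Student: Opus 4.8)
The plan is to show that the set $s(\mathscr{H}_{|V(G)|}*I(G))$ is exactly the set of sizes of edge cuts of $G$, so that its minimum is the edge connectivity of $G$, and then to recall that a graph (on at least two vertices) is edge connected precisely when every edge cut is nonempty.

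\textbf{Step 1 (unwinding $*_f$).} Fix $n$ with $1\le n\le|V(G)|-1$, write $m=|V(G)|-n$, and let $X$ be the side of size $n$ of the bipartition of $K_{n,m}$. For a bijection $f\colon V(K_{n,m})\to V(G)$, the graph $I(K_{n,m})*_f I(G)$ has vertex and edge set those of the complete graph $K_{n,m}$, and by the definitions of $*_f$ and of $I$ the weight of an edge $\{x,y\}$ is $v_{I(K_{n,m})}(\{x,y\})\cdot v_{I(G)}(\{f(x),f(y)\})$, which equals $1$ exactly when $x,y$ lie on opposite sides of the bipartition of $K_{n,m}$ and $\{f(x),f(y)\}\in E(G)$, and equals $0$ otherwise. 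Summing, $s\bigl(I(K_{n,m})*_f I(G)\bigr)$ counts the edges of $G$ having one endpoint in $f(X)$ and the other in $V(G)\setminus f(X)$; that is, it is the size of the edge cut of $G$ determined by the vertex subset $f(X)$.

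\textbf{Step 2 (the spectrum is the set of cut sizes).} For a nonempty proper subset $S\subsetneq V(G)$ let $c_G(S)$ denote the number of edges of $G$ with exactly one endpoint in $S$. As $n$ ranges over $\{1,\dots,|V(G)|-1\}$ and $f$ over all bijections, the set $f(X)$ ranges over all nonempty proper subsets of $V(G)$, and Step 1 gives $s\bigl(I(K_{n,m})*_f I(G)\bigr)=c_G(f(X))$. Hence
\[
s\bigl(\mathscr{H}_{|V(G)|}*I(G)\bigr)=\bigl\{\,c_G(S)\ \bigm|\ \varnothing\neq S\subsetneq V(G)\,\bigr\},
\]
so $\min s\bigl(\mathscr{H}_{|V(G)|}*I(G)\bigr)$ is precisely the minimum over all nontrivial vertex bipartitions of the number of crossing edges, i.e.\ the edge connectivity of $G$.

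\textbf{Step 3 (conclusion) and the main point.} A graph $G$ on at least two vertices is edge connected exactly when every nonempty proper subset of $V(G)$ has at least one edge leaving it, i.e.\ when $c_G(S)\ge1$ for all such $S$, which by Step 2 is equivalent to $\min s\bigl(\mathscr{H}_{|V(G)|}*I(G)\bigr)\ge1$. (The degenerate case $|V(G)|=1$ is handled separately: then $\mathscr{H}_{|V(G)|}=\varnothing$, $G$ is trivially connected, and the inequality holds vacuously with the convention $\min\varnothing=\infty$.) There is no deep obstacle here; the only step requiring genuine care is the bookkeeping in Step 1 — confirming that $E(H*_f G)=E(H)$ so the ambient graph stays complete, that the $0$-weight edges contribute nothing to $s$, and that the surviving weight-$1$ edges are exactly the edges of $G$ crossing the bipartition $f(X)\mid V(G)\setminus f(X)$ — together with disposing of the one-vertex case.
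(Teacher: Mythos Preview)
Your proof is correct and follows essentially the same approach as the paper's own proof: both identify $s(I(K_{n,|V(G)|-n})*_f I(G))$ with the number of edges of $G$ crossing the bipartition $(f(A),f(B))$ induced by the two sides of $K_{n,|V(G)|-n}$, and then invoke the characterization of connectivity via nonempty edge cuts. Your write-up is somewhat more detailed (the explicit bookkeeping in Step~1 and the one-vertex degenerate case), but the underlying argument is the same.
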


\begin{proof}
The proof follows from the fact that 
\[
s(K_{n,|V(G)|-n}*_f I(G))
\] 
is equal to the number of edges of $G$ between sets of vertices $f(A),f(B)\subseteq V(G)$, where $A,B\subseteq V(K_{n,|V(G)|-n})$ are two sides of this bipartite graph. 
And from the fact that a $G$ is connected if and only if for any partition $P,V(G)\setminus P\subseteq V(G)$ of the graph $G$, there exists an edge in $G$ connecting $P$ and $V(G)\setminus P$.
\end{proof}

\begin{theorem}
Let $G$ be unweighted graph, then
$
G \text{ is edge } (k+1) \text{-connected iff } 
$
\[
\min \left(s\left( \mathscr{H}_{|V(G)|}* I(K_{|V(G)|}\setminus e)^{*k}*I(G)\right)\right)\geq 1,
\]
where
\[
\mathscr{H}_{|V(G)|}=\left\{I(K_{n,|V(G)|-n}) ~\middle|~ 1\leq n \leq|V(G)|-1 \right\}.
\]
\end{theorem}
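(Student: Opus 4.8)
The plan is to combine the previous lemmas about edge deletion with Lemma \ref{l9}. Recall that by Menger's theorem, a graph $G$ is edge $(k+1)$-connected if and only if after deleting any $k$ edges the resulting graph is still (edge) connected. So the strategy is to show that the combinatorial spectrum $\mathscr{H}_{|V(G)|}* I(K_{|V(G)|}\setminus e)^{*k}*I(G)$ "sees" exactly the family of all graphs obtainable from $G$ by deleting at most $k$ edges, tested against the bipartite-cut family $\mathscr{H}_{|V(G)|}$, and that the minimum of $s$ over this family is $\geq 1$ precisely when every such graph is connected.

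First I would rewrite the product using associativity and commutativity of $*$ (the Theorem following Lemma \ref{l3}) to bring the factor $I(G)*I(K_{|V(G)|}\setminus e)^{*k}$ together. By Lemma \ref{l5}, this equals $I$ of the set $\mathcal{D}_k(G)$ of all graphs obtainable from $G$ by deleting at most $k$ edges. (One should note the hypothesis $G\neq K_{|V(G)|}$ in Lemma \ref{l5}; if $G=K_{|V(G)|}$ the statement is either trivial or handled separately, since $K_n$ is $(n-1)$-connected.) Then the whole expression becomes $\mathscr{H}_{|V(G)|}*\{I(G') \mid G'\in\mathcal{D}_k(G)\}$, which by the definition of $*$ on sets is $\bigcup_{G'\in\mathcal{D}_k(G)} \mathscr{H}_{|V(G)|}*I(G')$.

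Next I would apply the computation inside the proof of Lemma \ref{l9}: for each fixed $G'$, $\min s(\mathscr{H}_{|V(G)|}*I(G'))\geq 1$ iff $G'$ is connected. Taking the minimum over the union, $\min s(\mathscr{H}_{|V(G)|}*I(K_{|V(G)|}\setminus e)^{*k}*I(G)) = \min_{G'\in\mathcal{D}_k(G)} \min s(\mathscr{H}_{|V(G)|}*I(G'))$, and this is $\geq 1$ iff every $G'$ obtained from $G$ by deleting at most $k$ edges is connected. By Menger's theorem this is exactly edge $(k+1)$-connectivity of $G$, completing the proof.

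The main obstacle I expect is bookkeeping around the hypotheses and edge cases rather than anything deep: making sure the associativity/commutativity rearrangement is legitimate at the level of sets of graphs (it is, by the Theorem after Lemma \ref{l3}), handling the degenerate case $G=K_{|V(G)|}$ where Lemma \ref{l5} does not literally apply, and being careful that "deleting at most $k$ edges" (rather than exactly $k$) still characterizes $(k+1)$-connectivity correctly — it does, since deleting fewer edges from a graph that stays connected after deleting more is automatic, so the "at most" in Lemma \ref{l5} is harmless. One should also confirm that $\min$ interacts with the union of spectra as claimed, i.e. $\min s(\bigcup_\alpha \mathscr{A}_\alpha) = \min_\alpha \min s(\mathscr{A}_\alpha)$, which is immediate.
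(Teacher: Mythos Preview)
Your proposal is correct and follows essentially the same route as the paper's proof: identify $I(K_{|V(G)|}\setminus e)^{*k}*I(G)$ with $I$ of the set of all graphs obtained from $G$ by deleting at most $k$ edges via Lemma \ref{l5}, and then apply Lemma \ref{l9} to each such graph to conclude. The paper's version is terser and omits the explicit justifications you include (associativity, the behaviour of $\min$ under unions, the $G=K_{|V(G)|}$ edge case, and the appeal to Menger), but these are exactly the bookkeeping points one would want spelled out.
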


\begin{proof}
The proof follows from lemmas \ref{l5} and \ref{l9}. By lemma \ref{l5} is the spectrum
\[
\left(I(K_{|V(G)|}\setminus e)^{*k}*I(G)\right)
\]
equal to $I$ of set of all graphs obtainable from $G$ by deleting at most $k$ edges. Now by lemma \ref{l9} the whole formula is equivalent to fact 
that all graphs obtainable from $G$ by deleting at most $k$ edges are connected.
\end{proof}

Following lemmas will be used for description of vertex connectivity.

\begin{lemma}\label{l10}
Let $H$ be $R$-weighted complete graph, then 
\[
H*{I(\overline{K_{1,|V(G)|-1}})}^{*k}
\]
is a set of all graphs obtainable from $H$ by vanishing all edges neighboring to at most $k$ vertices. By this we mean a set \[
\left\{ H_{\setminus A}\middle|A \subseteq V(H),~|A|\leq k\right\},
\]
where $H_{\setminus A}$ is a graph given by vanishing all edges which have at least one endpoint in $A$. 
And for $G$ unweighted graph we denote
\[
\overline{G}=K_{|V(G)|}\setminus G.
\]
It is a complement of a graph $G$.
\end{lemma}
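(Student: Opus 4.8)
The plan is to identify the single factor $I(\overline{K_{1,|V(H)|-1}})$ explicitly, settle the case $k=1$ straight from the definition of $*_f$, and then induct on $k$ exactly as in the proof of Lemma \ref{l5}, using associativity of $*$.

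Write $n:=|V(H)|$. First I would observe that the complement of a star is $\overline{K_{1,n-1}}=K_1\sqcup K_{n-1}$: one isolated vertex $u_0$ together with a clique on the remaining $n-1$ vertices. Consequently, in $I(\overline{K_{1,n-1}})$ an edge has weight $1$ when neither of its endpoints is $u_0$ and weight $0$ when one endpoint is $u_0$. Now fix a bijection $f:V(H)\to V(\overline{K_{1,n-1}})$ and put $v:=f^{-1}(u_0)$. By the definition of $H*_fG$, the edge $\{x,y\}$ of $H$ receives the weight $v_H(\{x,y\})\cdot v_{I(\overline{K_{1,n-1}})}(f(\{x,y\}))$, which equals $v_H(\{x,y\})$ if $v\notin\{x,y\}$ and equals $0$ if $v\in\{x,y\}$. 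Hence $H*_fI(\overline{K_{1,n-1}})=H_{\setminus\{v\}}$, and as $f$ ranges over all bijections the vertex $v=f^{-1}(u_0)$ ranges over all of $V(H)$ and every vertex is attained. This settles the case $k=1$: $H*I(\overline{K_{1,n-1}})=\{H_{\setminus A}\mid A\subseteq V(H),\ |A|\le 1\}$.

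For the inductive step I would first record the elementary identity $(H_{\setminus A})_{\setminus B}=H_{\setminus(A\cup B)}$: vanishing the edges meeting $A$ and then those meeting $B$ is the same as vanishing those meeting $A\cup B$, since the operation depends only, monotonically and idempotently, on the set of affected vertices. Then, mimicking Lemma \ref{l5}, by associativity of $*$,
\[
H*I(\overline{K_{1,n-1}})^{*k}=\bigl(H*I(\overline{K_{1,n-1}})^{*(k-1)}\bigr)*I(\overline{K_{1,n-1}})=\bigl\{H_{\setminus A}\bigm| |A|\le k-1\bigr\}*I(\overline{K_{1,n-1}}),
\]
and applying the $k=1$ computation to each $H_{\setminus A}$ rewrites the right-hand side as $\{(H_{\setminus A})_{\setminus\{w\}}\mid |A|\le k-1,\ w\in V(H)\}=\{H_{\setminus(A\cup\{w\})}\mid |A|\le k-1,\ w\in V(H)\}$, which is precisely $\{H_{\setminus B}\mid |B|\le k\}$.

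The main thing to be careful about is the set bookkeeping in that last equality: one must check that as $A$ runs over subsets of size at most $k-1$ and $w$ over $V(H)$, the sets $A\cup\{w\}$ sweep out exactly all subsets of size at most $k$ (and no more), paying attention to the boundary case $w\in A$, where the size does not grow, and to the status of $A=\emptyset$. Apart from this indexing check, the argument is a direct unwinding of the definitions, so I do not expect any genuine obstacle once associativity of $*$ and the $k=1$ case are in hand.
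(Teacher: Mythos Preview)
Your proposal is correct and is exactly the approach the paper has in mind: the paper's proof consists of the single line ``The proof is similar to proof of lemma \ref{l5},'' and your argument is precisely that---a base case plus induction via associativity of $*$, mirroring the edge-deletion version. The one wrinkle you already flag, namely the status of $A=\emptyset$, is a slight imprecision in the lemma statement itself (for a generic $H$ the set produced is $\{H_{\setminus A}\mid 1\le|A|\le k\}$ unless some vertex already has all incident weights zero), not a defect in your reasoning.
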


\begin{proof}
The proof is similar to proof of lemma \ref{l5}.
\end{proof}

\begin{lemma}\label{l11}
Let $G$ be unweighted graph, then
\[
\left(I(G)+ \left(-\frac{1}{2}\right) \cdot K_{|V(G)|} \right) * I\left(\overline {K_{1,|V(G)|-1}}\right)^{*k} + \frac{1}{2} \cdot K_{|V(G)|}
\]
is a set of all graphs obtainable from $I(G)$ by changing all edges neighboring to at most $k$ vertices to $\frac{1}{2}$.
\end{lemma}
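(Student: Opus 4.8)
The plan is to reduce everything to Lemma~\ref{l10} by sandwiching it between two translations. Put $n=|V(G)|$ and let $H' := I(G) + \bigl(-\tfrac12\bigr)\cdot K_{n}$. Since this is a sum of two singleton sets, $H'$ is again a single $R$-weighted complete graph, and evaluating its weight function summand-wise shows $v_{H'}(e)=\tfrac12$ for $e\in E(G)$ and $v_{H'}(e)=-\tfrac12$ for $e\notin E(G)$. (Here we silently use $\tfrac12\in R$; as elsewhere in the paper the intended ring is $\mathbb{R}$.)

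First I would feed $H'$ into Lemma~\ref{l10} with the same exponent $k$, which gives
\[
H' * I\bigl(\overline{K_{1,n-1}}\bigr)^{*k}=\bigl\{\,H'_{\setminus A}\;\big|\;A\subseteq V(G),\ |A|\le k\,\bigr\},
\]
where $H'_{\setminus A}$ carries weight $0$ on every pair meeting $A$ and keeps the weight $\pm\tfrac12$ of $H'$ on every pair disjoint from $A$. Next I would add the singleton $\tfrac12\cdot K_n$ on the right; since adding a single graph to a set is just the elementwise translation, the whole left-hand side of the lemma becomes $\bigl\{\,H'_{\setminus A}+\tfrac12\cdot K_n\;\big|\;|A|\le k\,\bigr\}$. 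Finally a short case check on $H'_{\setminus A}+\tfrac12 K_n$ closes the argument: a pair meeting $A$ gets $0+\tfrac12=\tfrac12$; a pair disjoint from $A$ lying in $E(G)$ gets $\tfrac12+\tfrac12=1$; a pair disjoint from $A$ that is a non-edge gets $-\tfrac12+\tfrac12=0$. Thus $H'_{\setminus A}+\tfrac12 K_n$ is precisely $I(G)$ with every edge incident to $A$ re-weighted to $\tfrac12$, and as $A$ ranges over all subsets of size at most $k$ we obtain exactly the set claimed.

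I do not expect a genuine obstacle: the combinatorial heart of the statement is already in Lemma~\ref{l10}, and what remains is bookkeeping. The points worth a careful sentence in the writeup are: that the two outer occurrences of $+$ are additions of singletons, so they are honest translations and commute with the union produced by Lemma~\ref{l10}; that the $-\tfrac12\cdot K_n$ is inserted \emph{before} the multiplication precisely so that the ``vanishing'' in Lemma~\ref{l10} drops the incident pairs to the neutral value $0$, after which the closing $+\tfrac12 K_n$ shifts them to the target $\tfrac12$ while restoring the untouched pairs to their $I(G)$-weights $1$ and $0$; and that one should run through all four weight cases (edge of $G$ meeting $A$, non-edge meeting $A$, edge of $G$ disjoint from $A$, non-edge disjoint from $A$), the first two both yielding $\tfrac12$.
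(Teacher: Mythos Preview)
Your proposal is correct and follows essentially the same route as the paper: invoke Lemma~\ref{l10} on the shifted graph $I(G)-\tfrac12 K_n$, then undo the shift and read off the weights by cases. The paper's own proof is terser (it collapses your four cases into two: edges meeting the chosen vertices versus edges not meeting them), but the argument is the same.
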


\begin{proof}
Follows from lemma \ref{l10}
When we have some edge which is not a neighbor of any of those vertices then it's weight will be changed by $-\frac{1}{2}$ then times $1$ and $+\frac{1}{2}$, hence it will be the same.
But if we have a vertex neighboring to some of those vertices then it's weight will be changed by $-\frac{1}{2}$ then times $0$ and $+\frac{1}{2}$, hence it will be equal to $\frac{1}{2}$.
\end{proof}

\begin{theorem}
Let $G$ be unweighted graph such that $G\neq K_{|V(G)|}$ and $k \in \mathbb{N}$, then
$G$ is vertex $ (k+1) $-connected if and only if for all  
\[\begin{aligned}
&H \in \\
&\left(\left(I(G)+ \left(-\frac{1}{2}\right) \cdot K_{|V(G)|} \right) * I\left(\overline {K_{1,|V(G)|-1}}\right)^{*k} + \frac{1}{2} \cdot K_{|V(G)|}\right)* \mathscr{H}_{|V(G)|},
\end{aligned}\]
there exists $e\in E(H)$ such that $v_H(e)=1$ or 
\[
H \cong \frac{1}{2} \cdot I(K_{n,|V(G)|-n})
\] 
for some $n \in \{1,\dots ,|V(G)|-1 \}$.
Where
\[
\mathscr{H}_{|V(G)|}=\left\{I(K_{n,|V(G)|-n}) ~\middle|~ 1\leq n \leq|V(G)| \right\}.
\]
\end{theorem}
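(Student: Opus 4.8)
The strategy is to make the combinatorial spectrum explicit via Lemma~\ref{l11} and then read off both sides of the claimed equivalence as statements about small vertex cuts of $G$. By Lemma~\ref{l11}, the left factor $\bigl(I(G)+(-\tfrac12)\cdot K_{|V(G)|}\bigr)*I(\overline{K_{1,|V(G)|-1}})^{*k}+\tfrac12\cdot K_{|V(G)|}$ ranges exactly over the weighted graphs $G_A$, $A\subseteq V(G)$ with $|A|\le k$, where $v_{G_A}(e)=\tfrac12$ if $e$ meets $A$ and $v_{G_A}(e)=v_{I(G)}(e)\in\{0,1\}$ otherwise. Multiplying on the right by a member $I(K_{n,|V(G)|-n})$ of $\mathscr{H}_{|V(G)|}$ and unwinding the definition of $*$, a typical $H$ in the whole set is, up to isomorphism, determined by such an $A$ together with an ordered bipartition $V(G)=P\sqcup Q$ with $|P|=n\ge 1$: one has $v_H(e)=v_{G_A}(e)$ when $e$ joins $P$ to $Q$, and $v_H(e)=0$ when $e$ lies inside $P$ or inside $Q$. (The extreme choice $n=|V(G)|$, i.e.\ $Q=\emptyset$, yields the zero graph, which we regard as the degenerate member $\tfrac12 I(K_{|V(G)|,0})$ of the exceptional family.) Two consequences are used repeatedly: (i) $H$ has an edge of weight $1$ iff $G$ has an edge between $P\setminus A$ and $Q\setminus A$, so $H$ has \emph{no} weight-$1$ edge iff $(P\setminus A,Q\setminus A)$ is a separation of $G-A$; and (ii) if $P\subseteq A$ or $Q\subseteq A$, then every crossing edge of $(P,Q)$ meets $A$, hence all crossing edges receive weight $\tfrac12$ and all others weight $0$, i.e.\ $H\cong\tfrac12 I(K_{|P|,|Q|})$.

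For the forward direction, suppose $G$ is $(k+1)$-connected and let $H$ be as above with no weight-$1$ edge; by (i), $(P\setminus A,Q\setminus A)$ separates $G-A$. If both $P\setminus A$ and $Q\setminus A$ were nonempty, then $G-A$ would be disconnected while $|A|\le k$, contradicting $(k+1)$-connectivity. Hence $P\subseteq A$ or $Q\subseteq A$, so by (ii) $H\cong\tfrac12 I(K_{|P|,|Q|})$. Here $|P|\ge 1$, and $|Q|\ge 1$ in the nondegenerate cases (if $P\subseteq A$ then $|P|\le k$, while $(k+1)$-connectivity forces $|V(G)|\ge k+2$, so $|Q|\ge 2$; if $Q\subseteq A$ with $Q\ne\emptyset$ then $|Q|\ge 1$; the case $Q=\emptyset$ being the degenerate one above). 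Thus $H$ lies in the exceptional family, as required.

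For the converse, suppose $G$ is not $(k+1)$-connected. Since $G\ne K_{|V(G)|}$, there are $A\subseteq V(G)$ with $|A|\le k$ and a separation $(P_0,Q_0)$ of $G-A$ with $P_0,Q_0\ne\emptyset$ and no $G$-edge between them: if $G$ has a cut $S$ of size $\le k$, take $A=S$ and split $V(G)\setminus S$ into a union of components and its complement; if instead $|V(G)|\le k+1$, pick a non-edge $\{u,v\}$ of $G$ and take $A=V(G)\setminus\{u,v\}$, $P_0=\{u\}$, $Q_0=\{v\}$ (so $|A|=|V(G)|-2\le k-1$). Put $P=P_0$, $Q=Q_0\cup A$, and let $H$ be the corresponding member of the set; its crossing edges are the edges between $P_0$ and $Q_0$ (non-edges of $G$ avoiding $A$, hence weight $0$) and the edges between $P_0$ and $A$ (weight $\tfrac12$), and all other edges have weight $0$. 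Thus $H$ has no weight-$1$ edge and every vertex of $Q_0$ is isolated in $H$; since $Q_0\ne\emptyset$ and no $\tfrac12 I(K_{n,|V(G)|-n})$ with $1\le n\le|V(G)|-1$ has an isolated vertex, $H$ is a witness to the failure of the right-hand side. This gives the contrapositive, completing the equivalence.

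The genuinely delicate part is the bookkeeping at the degenerate ends: handling the empty-side possibilities $P\setminus A=\emptyset$, $Q\setminus A=\emptyset$ and $Q=\emptyset$ together with the small-order case $|V(G)|\le k+1$, and checking that in precisely these situations $H$ really does land in the family $\tfrac12 I(K_{n,|V(G)|-n})$ (with the convention identifying the zero graph with $\tfrac12 I(K_{|V(G)|,0})$), whereas a genuine vertex cut of size $\le k$ always forces an isolated vertex and so keeps $H$ outside that family. Once these boundary cases are pinned down, the remainder is the routine dictionary furnished by Lemmas~\ref{l10} and~\ref{l11}.
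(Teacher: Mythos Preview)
Your argument is correct and follows essentially the same route as the paper: invoke Lemma~\ref{l11} to identify the inner factor as the graphs $G_A$ with $|A|\le k$, parametrize each $H$ by the pair $(A,\{P,Q\})$, and then translate the two conditions on $H$ into the existence or non-existence of a small vertex cut, just as the paper does. Your choice of bipartition $P=P_0$, $Q=Q_0\cup A$ in the converse direction is the mirror image of the paper's $C\cup\{v_1,\dots,v_l\}$ versus its complement, and your ``isolated vertex in $Q_0$'' argument is the same obstruction the paper uses to rule out $H\cong\tfrac12 I(K_{n,|V(G)|-n})$.

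Where you go beyond the paper is in the bookkeeping: you make explicit the small-order case $|V(G)|\le k+1$ (which the paper absorbs silently into ``$G\setminus\{v_1,\dots,v_l\}$ disconnected''), and you flag the degenerate choice $n=|V(G)|$ in $\mathscr{H}_{|V(G)|}$, which produces the zero graph. The paper's own proof tacitly works only with nonempty proper bipartitions (``leave only edges between some nonempty proper subset of vertices and its complement''), so the upper bound $n\le|V(G)|$ in the displayed definition of $\mathscr{H}_{|V(G)|}$ is almost certainly a typo for $|V(G)|-1$, matching Lemma~\ref{l9}. Your convention of placing the zero graph in the exceptional family is a reasonable patch, but it would be cleaner simply to read $\mathscr{H}_{|V(G)|}$ with $1\le n\le |V(G)|-1$; then no convention is needed and your proof goes through verbatim.
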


\begin{proof}
\begin{align*}
&\left(\left(I(G)+ \left(-\frac{1}{2}\right) \cdot K_{|V(G)|} \right) * I\left(\overline {K_{1,|V(G)|-1}}\right)^{*k} + \frac{1}{2} \cdot K_{|V(G)|}\right)* \mathscr{H}_{|V(G)|}\\
&=\left\{\substack{\text{set of all graphs obtainable from $I(G)$ by changing} \\ \text{all edges neighboring to at most $k$ vertices to $\frac{1}{2}$}} \right\}* \mathscr{H}_{|V(G)|},
\end{align*}
hence whole formula is equal to set of all graphs obtainable from $I(G)$ by changing all edges neighboring to at most $k$ vertices to $\frac{1}{2}$ 
and then leave only edges between some nonempty proper subset of vertices and it's complement. 

Let $G$ be not $(k+1)$-connected we will show that there exists a graph $H$ which does not satisfy given condition. By lemma \ref{l11}
Because $G$ is not a complete graph, know that there exist vertices $v_1,v_2,\dots, v_l \in V(G)$ where $l\leq k$ such that 
$G\setminus \{ v_1,v_2,\dots, v_l \}$ is disconnected. Let $C$ be a component of $G\setminus \{ v_1,v_2,\dots, v_l \}$, 
if we now consider a graph $H$ in the spectrum from the theorem given by changing all edges neighboring to vertices $\{ v_1,v_2,\dots, v_l \}$ to $\frac{1}{2}$ 
and then leave only edges between $C\cup \{ v_1,v_2,\dots, v_l \}$ and it's complement. By previous this graph lies in the spectrum and we will show that it not satisfy the condition.
All edges inside sets $C\cup \{ v_1,v_2,\dots, v_l \}$ and $\overline{C\cup \{ v_1,v_2,\dots, v_l \}}$ are $0$, 
all edges between $C$ and $\overline{C\cup \{ v_1,v_2,\dots, v_l \}}$ are $0$ and all edges between $ \{ v_1,v_2,\dots, v_l \}$ and $\overline{C\cup \{ v_1,v_2,\dots, v_l \}}$ are $\frac{1}{2}$. 
Hence there is no $1$ and it is not isomorphic to $\frac{1}{2} \cdot I(K_{n,|V(G)|-n})$.

For another direction let $G$ be $(k+1)$-connected we will show that all $H$ from the spectrum satisfy the condition. We already know that all $H$ from the spectrum are obtained by a set 
$\{ v_1,v_2,\dots, v_l \}\subseteq V(G)$ and a partition in to two nonempty parts $A,\overline{A}\subseteq V(G)$. If $G\setminus \{ v_1,v_2,\dots, v_l \}$ 
has a nonempty intersection with both $A$ and $\overline{A}$
then because $G\setminus \{ v_1,v_2,\dots, v_l \}$ is connected, there exists at least one edge between $A$ and $\overline{A}$, hence $H$ contains an edge weighted by $1$.

If $A\supseteq G\setminus \{ v_1,v_2,\dots, v_l \}$ (analogously for $\overline{A}$) then $\overline{A} \subseteq \{ v_1,v_2,\dots, v_l \}$ 
and this means that all edges between $A$ and $\overline{A}$
are weighted by $\frac{1}{2}$ and rest are by previous part $0$, this means that it is isomorphic to $ \frac{1}{2} \cdot I(K_{n,|V(G)|-n})$.
\end{proof}

\begin{remark}
The condition of $G$ not being a complete graph from the previous theorem can be easily said also by combinatorial spectra, it is equivalent to the following condition,
\[
s\left(I(G)*I\left( \bigl(\xymatrix{ \cdot \ar@{-}[r] &\cdot  } \bigr) \sqcup \bigsqcup_{i=1}^{|V(G)|-2} \cdot \right) \right)\neq \{1\}.
\]
\end{remark}

\begin{theorem}
Let $n>k \in \mathbb{N}$ then $n \geq R(k,k)$ if and only if 
\[
\exists G \in \bigl( 2 \cdot I(K_{n}\setminus e)^{*\infty} + (-1) \cdot K_{n}  \bigr),
\]
such that
\[
-{k \choose 2}   <
s \left( G        
*    I\left( K_k \sqcup \bigsqcup_{i=1}^{|V(G)|-k} \cdot   \right) \right)  <  {k \choose 2 }, 
\]
where $R(k,k)$ is a Ramsey number.
\end{theorem}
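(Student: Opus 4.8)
The plan is to translate the whole statement into the language of $2$-edge-colourings of $K_n$ and then read off the Ramsey threshold. First I would unwind the left-hand factor. By Lemma~\ref{l6}, $I(K_n\setminus e)^{*\infty}$ equals $I$ of every graph on $n$ vertices except $K_n$ itself; scaling by $2$ turns the $0/1$ weights into $0/2$ weights, and adding the invariant graph $(-1)\cdot K_n$ subtracts $1$ from every weight. Hence
\[
2\cdot I(K_{n}\setminus e)^{*\infty}+(-1)\cdot K_{n}
\]
is exactly the family of $\{\pm1\}$-weightings of $K_n$, one weighting $G_H$ for each graph $H\neq K_n$ (weight $+1$ on $E(H)$, weight $-1$ on the non-edges), i.e.\ every $2$-edge-colouring of $K_n$ other than the all-$+1$ one, with ``$+1$'' and ``$-1$'' playing the roles of the two colours.

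Second, I would compute the spectrum. Writing $J:=I\!\left(K_{k}\sqcup\bigsqcup_{i=1}^{n-k}\cdot\right)$, the weight $v_{J}$ is $1$ on the $\binom{k}{2}$ edges inside the $K_k$ and $0$ elsewhere, so for a bijection $f$ carrying the $k$-set $T\subseteq V(G)$ onto that $K_k$,
\[
s(G*_{f}J)=\sum_{e\in E(G)}v_{G}(e)\,v_{J}(f(e))=\sum_{\{x,y\}\subseteq T}v_{G}(\{x,y\})=2\,r(T)-\binom{k}{2},
\]
where $r(T)$ counts the $+1$-edges inside $T$. As $f$ ranges over all bijections $T$ ranges over all $k$-subsets, so
\[
s(\{G\}*\{J\})=\Bigl\{\,2\,r(T)-\tbinom{k}{2}\ \Bigm|\ T\subseteq V(G),\ |T|=k\,\Bigr\}.
\]

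Third, I would observe that $2r(T)-\binom{k}{2}$ lies in the open interval $\bigl(-\binom{k}{2},\binom{k}{2}\bigr)$ iff $0<r(T)<\binom{k}{2}$, that is, iff the $k$-set $T$ is not monochromatic, while the two endpoints $\pm\binom{k}{2}$ are attained exactly by monochromatic $k$-sets. Reading the condition $-\binom{k}{2}<s(\{G\}*\{J\})<\binom{k}{2}$ as the whole spectrum lying in the open interval, it therefore holds precisely when the colouring $G$ contains no monochromatic $K_k$. Thus the existence of an admissible $G$ satisfying the displayed bound is equivalent to the existence of a $2$-edge-colouring of $K_n$ with no monochromatic $K_k$; note the excluded all-$+1$ colouring already contains a monochromatic $K_k$ (since $n>k$), so its omission from the family is immaterial.

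Finally I would invoke the definition of the Ramsey number $R(k,k)$ as the least $N$ for which \emph{every} $2$-edge-colouring of $K_N$ contains a monochromatic $K_k$, and compare it against the characterisation of the previous step to obtain the stated biconditional. The main obstacle I expect is exactly this last comparison: one must pin down the interaction between the strict open-interval condition, the quantifier ``$\exists G$'', and the definition of $R(k,k)$, and in particular settle the boundary case $n=R(k,k)$ so that the equivalence lands on the inequality as written. The earlier reduction to ``$G$ has no monochromatic $K_k$'' is routine bookkeeping, so essentially all of the content of the theorem sits in correctly matching this avoidance condition to the Ramsey threshold.
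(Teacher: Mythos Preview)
Your reduction is exactly the paper's: invoke Lemma~\ref{l6} to see that $2\cdot I(K_n\setminus e)^{*\infty}+(-1)\cdot K_n$ is the family of $\{\pm1\}$-colourings of $K_n$ (minus the all-$+1$ one, which is harmless since $n>k$), compute $s(G*J)$ over $k$-subsets as the number of $+1$-edges minus the number of $-1$-edges, and read the strict inequalities as ``no $k$-subset is monochromatic''. The paper's proof stops at precisely that point and does not carry out the final comparison with $R(k,k)$ that you flag as the remaining obstacle.

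Your instinct there is correct and worth stating plainly: the existence of a $2$-colouring of $K_n$ with no monochromatic $K_k$ is, by definition of the Ramsey number, equivalent to $n<R(k,k)$, not to $n\geq R(k,k)$. So the biconditional as written has the inequality reversed (or, equivalently, the ``iff'' should be negated). The paper's own argument establishes the same equivalence you do and therefore exhibits the same mismatch with the displayed statement; there is no additional idea you are missing.
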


\begin{proof}
The idea of this condition is generate all graphs and check that there not contains a monochromatic clique. 
By lemma \ref{l6} we know that $I(K_{n}\setminus e)^{*\infty}$ is set of $I$ of all graphs of order $n$ up to a complete graph, but the complete graph is irrelevant for Ramsey condition.
Hence $\bigl( 2 \cdot I(K_{n}\setminus e)^{*\infty} + (-1) \cdot K_{n}  \bigr)$ is the set of all colorings of a graph $K_n$ by colors $1,-1$. Now the formula 
\[
s \left( G        
*    I\left( K_k \sqcup \bigsqcup_{i=1}^{|V(G)|-k} \cdot   \right) \right) 
\] 
is equal to the set of numbers of $1$ minus numbers of $-1$ in all induced subgraphs of $G$ of order of $k$.
And inequalities say that all these subgraphs are not monochromatic. 
\end{proof}

\section{Properties}

In this section we are going to show some properties of combinatorial spectra.

\begin{theorem} \label{dense}
Let $G$ be a $\mathbb{Z}$-weighted complete graph such that 
\begin{equation}\label{huste}
s\left(G*I\left( \bigl(\xymatrix{ \cdot \ar@{-}[r] &\cdot  } \bigr) \sqcup \bigsqcup_{i=1}^{|V(G)|-2} \cdot \right) \right) 
= \{1,2,\dots,k\},
\end{equation}
for some $k$,
then 
\[
\bigcup_{H \subseteq K_{|V(G)|}}s(I(H)*G) = \{0,\dots,l\}
\]
where $l$ is the only element of $
s(I(K_{|V(G)|})*G).
$
\end{theorem}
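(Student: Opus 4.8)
The plan is to strip away the combinatorial-spectrum formalism and reduce the statement to an elementary fact about subset sums. First I would identify the two sides concretely. For the right-hand side, note that $I(K_{|V(G)|})$ carries weight $1$ on every edge, so for every pseudoordering $f$ one has $s(I(K_{|V(G)|}) *_f G) = \sum_{e} v_G(f(e)) = \sum_{e} v_G(e) = s(G)$; this is independent of $f$, which both re-proves that $s(I(K_{|V(G)|}) * G)$ is a singleton and identifies $l = s(G)$, the total weight of $G$. For the left-hand side, for a spanning subgraph $H \subseteq K_{|V(G)|}$ and a pseudoordering $f$ one has $s(I(H) *_f G) = \sum_{e \in E(H)} v_G(f(e))$, which is the sum of $v_G$ over the edge set $f(E(H)) \subseteq E(K_{|V(G)|})$; conversely every subset $S \subseteq E(K_{|V(G)|})$ is realised by taking $H$ to be the spanning subgraph with edge set $S$ and $f = \mathrm{id}$. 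Hence
\[
\bigcup_{H \subseteq K_{|V(G)|}} s(I(H)*G) \;=\; \Bigl\{\, \textstyle\sum_{e \in S} v_G(e) \ \Bigm|\ S \subseteq E(K_{|V(G)|}) \,\Bigr\},
\]
the set of all subset sums of the multiset $W = \{\, v_G(e) : e \in E(K_{|V(G)|})\,\}$ of edge weights of $G$.

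Next I would read off what the hypothesis (\ref{huste}) says about $W$. By the same kind of computation as in Theorem~\ref{hamspec}, $s\bigl(G * I(\text{one edge}\sqcup\text{isolated vertices})\bigr)$ is precisely the \emph{set} of values occurring in $W$. So (\ref{huste}) says exactly that every edge weight of $G$ is a positive integer at most $k$ and that each of the values $1, 2, \dots, k$ actually occurs. (In particular $|V(G)|\ge 2$ and $k\ge 1$; the degenerate cases are vacuous or trivial.)

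It then remains to prove the arithmetic statement: if a finite multiset $W$ of positive integers takes every value in $\{1,\dots,k\}$ and only such values, then its subset sums are exactly $\{0,1,\dots,\Sigma\}$ where $\Sigma = \sum W$. Since all entries are positive, every subset sum lies in $\{0,\dots,\Sigma\}$, and $0$ and $\Sigma$ are attained. For the reverse inclusion I would use the standard ``complete sequence'' induction: if a sub-multiset has subset sums $\{0,\dots,T\}$ and $a$ is another element with $a \le T+1$, then adjoining $a$ yields subset sums $\{0,\dots,T+a\}$. Order $W$ so that its first $k$ entries are one copy each of $1,2,\dots,k$ (possible since each value occurs); the partial sums $T_j = 1+\dots+j$ satisfy $(j+1)\le T_j+1$ for $j\ge 1$, so after these $k$ steps the subset sums are $\{0,\dots,\binom{k+1}{2}\}$. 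Every remaining entry $b$ of $W$ satisfies $b \le k \le \binom{k+1}{2} \le T$, where $T$ is the running total, so the induction continues through all of $W$ and terminates at $\{0,\dots,\Sigma\} = \{0,\dots,l\}$, which is the claim.

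There is no serious obstacle here; the content is entirely in getting the two identifications in the first paragraph right and in checking the inequality $b \le T+1$ at each stage of the induction in the last paragraph. The hypothesis is used precisely to guarantee both that the weights are bounded (so the increments stay small) and that every value $1,\dots,k$ is present (so the base interval $\{0,\dots,\binom{k+1}{2}\}$ is actually filled, which would fail if, say, the value $2$ were missing).
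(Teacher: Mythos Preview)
Your proof is correct. The identifications in your first two paragraphs are exactly right, and the complete-sequence induction in the third paragraph is sound (the inequality $j+1\le T_j+1$ for $j\ge 1$ and then $b\le k\le T$ for the tail are both easily checked).

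The paper proves the same arithmetic fact by a different, more local argument: rather than reordering the multiset and building up an interval from scratch, it shows directly that any achievable value $m<l$ can be pushed to $m+1$. Given a subset $S$ realising $m$, either some weight-$1$ edge lies outside $S$ (add it), or else the minimum weight $i$ among edges outside $S$ satisfies $i\ge 2$; then by the hypothesis an edge of weight $i-1$ exists, and by minimality of $i$ every such edge lies in $S$, so one swaps a weight-$(i-1)$ edge out and a weight-$i$ edge in. Your approach makes the reduction to a standard ``subset sums of a complete sequence'' statement explicit and cleanly separates the graph-theoretic unpacking from the arithmetic; the paper's swap argument avoids the reordering step and the induction bookkeeping, at the cost of leaving the identification with subset sums implicit. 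Both arguments use the hypothesis in the same essential way: the presence of every value $1,\dots,k$ guarantees that the needed ``small'' element is always available.
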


\begin{proof}
Let we have a $m\in \bigcup_{H \subseteq K_{|V(G)|}}(s(I(H)*G))$ such that $m \neq l$ we will show that also 
$m+1 \in \bigcup_{H \subseteq K_{|V(G)|}}(s(I(H)*G))$.
Let \[
m = s(I(H)*_f G),
\]
if there is an edge $e \in E(G)$ such that $v_G(e)= 1$ and $v_{I(H)}(f^{-1} (e))=0$ then we will consider a graph
$H'=H\cup f^{-1}(e)$ and now 
\[
m+1=s(I(H')*_f G).
\]
if there is no that edge we will consider a 
\[
i=\min \left\{ v_G(e)  \middle| e\in E(G),\, v_{I(H)}(f^{-1} (e))=0  \right\},
\]
this set is non-empty because $m\neq l$ and by the assumption $i\geq 2$.
Let $e_1$ be such an edge for which the set is minimized. By minimality of $i$ and by property \eqref{huste} there exists an edge $e_2$ such that 
$
v_G(e_2)=i-1
$ and $v_{I(H)}(f^{-1} (e_2))=1$. Let now consider a graph $H'= H \cup e_1 \setminus e_2$, this graph again satisfy the condition what we want
\[
m+1=s(I(H')*_f G).
\]

\end{proof}

\begin{corollary}
Let $G$ be an unweighted connected graph then
\[
\bigcup_{H \subseteq K_{|V(G)|}}\mathscr{H}_H(G) = \{0,\dots,h^+_{K_{|V(G)|}}(G)\},
\]
where $h^+_{H}(G)=\max\{\mathscr{H}_H(G)\}$.
\end{corollary}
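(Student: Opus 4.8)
The plan is to deduce this corollary directly from Theorem \ref{dense} by specializing $G$ to $\rho(G)$ and checking that its hypothesis \eqref{huste} is met. By Theorem \ref{hamspec}, for an unweighted graph $G$ we have $\mathscr{H}_H(G) = s(\{I(H)\}*\{\rho(G)\})$, so
\[
\bigcup_{H \subseteq K_{|V(G)|}}\mathscr{H}_H(G) = \bigcup_{H \subseteq K_{|V(G)|}} s(I(H)*\rho(G)),
\]
which is exactly the left-hand side appearing in the conclusion of Theorem \ref{dense} with the weighted graph there taken to be $\rho(G)$. Since $G$ is connected, $\rho(G)$ is a $\mathbb{Z}$-weighted complete graph, so it is a legitimate input.

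First I would verify the hypothesis \eqref{huste}: the spectrum $s\bigl(\rho(G)*I(\,\cdot\!-\!\cdot \sqcup \bigsqcup \cdot\,)\bigr)$ consists of all values $\rho_G(f(x),f(y))$ over pseudoorderings $f$, i.e.\ over all ordered pairs of distinct vertices; this is precisely the set of all pairwise distances realized in $G$. Because $G$ is connected, this set is $\{1,2,\dots,\operatorname{diam}(G)\}$: the value $1$ occurs (any edge, since $G$ has at least one edge when $|V(G)|\geq 2$; the degenerate case $|V(G)|\le 1$ should be noted separately), the maximum is the diameter, and every intermediate distance $d$ is attained because on a shortest path of length $\operatorname{diam}(G)$ the consecutive prefixes realize all distances $1,\dots,\operatorname{diam}(G)$. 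Hence \eqref{huste} holds with $k=\operatorname{diam}(G)$.

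Then I would apply Theorem \ref{dense} to conclude that $\bigcup_{H \subseteq K_{|V(G)|}} s(I(H)*\rho(G)) = \{0,\dots,l\}$, where $l$ is the unique element of $s(I(K_{|V(G)|})*\rho(G))$. It remains to identify $l$ with $h^+_{K_{|V(G)|}}(G)$. Since $I(K_{|V(G)|})$ puts weight $1$ on every edge, there is only one graph (up to the choice of pseudoordering) in $I(K_{|V(G)|})*\rho(G)$ and its $s$-value is $\sum_{\{x,y\}} \rho_G(x,y)$, independent of $f$; by Theorem \ref{hamspec} this is the single element of $\mathscr{H}_{K_{|V(G)|}}(G)$, hence equals $h^+_{K_{|V(G)|}}(G)$. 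Combining, $\bigcup_{H}\mathscr{H}_H(G) = \{0,\dots,h^+_{K_{|V(G)|}}(G)\}$, as claimed.

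The only real subtlety — and the step I would be most careful about — is the verification that the distance spectrum of a connected graph is a full interval $\{1,\dots,\operatorname{diam}(G)\}$ with no gaps; this is the ``consecutive prefixes of a geodesic'' argument above, and it is exactly what feeds hypothesis \eqref{huste}. Everything else is bookkeeping: translating between the $\mathscr{H}_H$ notation and the $s(I(H)*\rho(G))$ notation via Theorem \ref{hamspec}, and noting the trivial boundary cases ($|V(G)|\le 1$, or $G$ a single edge) where the statement is immediate.
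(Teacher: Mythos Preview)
Your proposal is correct and follows essentially the same route as the paper: apply Theorem~\ref{dense} to $\rho(G)$, use Theorem~\ref{hamspec} to translate between $\mathscr{H}_H(G)$ and $s(I(H)*\rho(G))$, and verify hypothesis~\eqref{huste} via the geodesic argument (if two vertices are at distance $i\geq 2$, a neighbour along a shortest path is at distance $i-1$). Your version is slightly more explicit in identifying $l$ with $h^+_{K_{|V(G)|}}(G)$ and in flagging the degenerate cases, but the strategy is identical.
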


\begin{proof}
This corollary is only an application of theorems \ref{hamspec} and \ref{dense}, the only thing which remains is to check the condition
\[
s\left(\rho (G)*I\left( \bigl(\xymatrix{ \cdot \ar@{-}[r] &\cdot  } \bigr) \sqcup \bigsqcup_{i=1}^{|V(G)|-2} \cdot \right) \right) 
= \{1,2,\dots,k\},
\]
but this is obvious, let $v,u\in V(G)$ be vertices with distance $i \geq 2 $, let $v,v_1,\dots,v_{i-1},u$ be a path of minimal length connecting $v$ and $u$ then now the distance between  
$v_1$ and $u$ is $i-1$.
\end{proof}

\begin{theorem}
Let $R$ be a ring then we have an inclusion 
\[
R \hookrightarrow \mathscr{P}(R-Graphs_n) 
\]
given by 
\[
r \mapsto r\{\cdot I(K_n)\},
\]
respecting the structure.
\end{theorem}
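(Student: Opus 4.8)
The plan is to unwind the definitions on singleton sets and observe that the image $\{\,r\cdot I(K_n)\mid r\in R\,\}$ is closed under every operation in sight, with the operations transported exactly onto those of $R$. First I would record the standing hypothesis $n\geq 2$ (for $n\leq 1$ the complete graph has no edges and the map is not injective), and for $r\in R$ abbreviate by $K_n^{(r)}:=r\cdot I(K_n)$ the $R$-weighted complete graph on the fixed vertex set all of whose edges carry weight $r$; the map under consideration is then $\Phi(r)=\{K_n^{(r)}\}$. I would also note immediately that $K_n^{(r)}$, being a complete graph with constant edge weight, is iso-closed as a singleton, $\langle\{K_n^{(r)}\}\rangle=\{K_n^{(r)}\}$, so $\Phi$ descends to the commutative monoid $\sfrac{\langle\mathscr{P}(R-Graphs_n)\rangle}{iso}$; thus it suffices to argue with $*$ and then transfer to $\*$.

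The heart of the matter is a one-line computation. For any bijection $f$ and any edge $e$, the definition of $*_f$ gives $v_{K_n^{(r)}*_f K_n^{(s)}}(e)=v_{K_n^{(r)}}(e)\cdot v_{K_n^{(s)}}(f(e))=rs$, so $K_n^{(r)}*_f K_n^{(s)}=K_n^{(rs)}$ independently of the choice of $f$; in the same way $K_n^{(r)}+K_n^{(s)}=K_n^{(r+s)}$ and $a\cdot K_n^{(r)}=K_n^{(ar)}$ for $a\in R$. Plugging these into the definitions of $*$, $+$, and the scalar action on singleton sets yields $\Phi(r)*\Phi(s)=\{K_n^{(rs)}\}=\Phi(rs)$, $\Phi(r)+\Phi(s)=\Phi(r+s)$, and $a\cdot\Phi(r)=\Phi(ar)$; moreover $\Phi(1)=\{I(K_n)\}$ is the $*$-identity by Lemma \ref{l3}. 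This is precisely the sense in which $\Phi$ ``respects the structure'': it is simultaneously a homomorphism of additive groups, a (unital) homomorphism for $*$, and a map of $R$-modules, and the same identities hold with $\*$ in place of $*$ since each $\Phi(r)$ is already iso-closed.

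Injectivity is then immediate: if $r\neq s$, then, choosing any edge $e$ (which exists because $n\geq 2$), the graphs $K_n^{(r)}$ and $K_n^{(s)}$ differ in the weight of $e$, whence $\{K_n^{(r)}\}\neq\{K_n^{(s)}\}$; distinct complete graphs with constant weights stay distinct after iso-closure, so injectivity survives the passage to the quotient as well.

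I do not anticipate a real obstacle; the only thing requiring care is the interpretation of ``respecting the structure'', because $\mathscr{P}(R-Graphs_n)$ is itself only ``close to'' a ring --- distributivity of $*$ over $+$ holds there merely as the inclusion of Theorem \ref{Tad}, not as an equality. The clean statement is the one above (preservation of $+$, $*$, $1$, and scalars), and one then gets for free that, since $R$ is an honest ring, the image $\Phi(R)$ is an honest commutative subring on which distributivity does hold with equality; so $\Phi$ realizes $R$ as a genuine subring of the ring-like structure $(\mathscr{P}(R-Graphs_n),*,+)$.
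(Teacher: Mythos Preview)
Your proposal is correct and follows essentially the same approach as the paper: verify directly that $\{r\cdot I(K_n)\}*\{s\cdot I(K_n)\}=\{(rs)\cdot I(K_n)\}$ and $\{r\cdot I(K_n)\}+\{s\cdot I(K_n)\}=\{(r+s)\cdot I(K_n)\}$, together with preservation of $0$ and $1$. You are in fact more thorough than the paper, which omits the explicit injectivity argument and the caveat $n\geq 2$, and does not discuss the passage to~$\*$; your additions are welcome and correct.
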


\begin{remark}
We would like to say that it is a homomorphism of rings but the right side is not a ring, but if the structure of 
$\mathscr{P}(R-Graphs_n)$ has a name (for example ordered weak ring) it will be a homomorphism of that. 
\end{remark}

\begin{proof}
\begin{align*}
(r\cdot s)\cdot \{I(K_n)\} &= \left( r\cdot \{I(K_n)\} \right)* \left( s\cdot \{I(K_n)\} \right) \\
(r +  s)\cdot \{I(K_n)\} &= \left( r\cdot \{I(K_n)\} \right)+ \left( s\cdot \{I(K_n)\} \right)
\end{align*}
and it also preserves $0$ and $1$ of these two "weak rings". 
\end{proof}

\begin{proposition}
Let $R$ be a ring then $R$ is a domain if and only if \[\mathscr{P}(R-Graphs_n)\] is a "domain" (this means that it does not contain zero divisors).
\end{proposition}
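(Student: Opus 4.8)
The plan is to prove both implications by analyzing when a product $\mathscr{A}*\mathscr{B}$ can contain the zero graph $0\cdot I(K_n)$ (the additive identity of $\mathscr{P}(R\text{-}Graphs_n)$, i.e. the singleton $\{Z\}$ where $Z$ has all weights $0$) without either factor being $\{Z\}$. Note first that the zero of this structure is $\{Z\}$ and that $\{Z\}*\mathscr{B}=\{Z\}$ always, so "zero divisor" means: $\mathscr{A},\mathscr{B}\neq\{Z\}$ but $\mathscr{A}*\mathscr{B}=\{Z\}$ (or, depending on which notion of "domain" one adopts, $\{Z\}\in\mathscr{A}*\mathscr{B}$). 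I would fix the convention explicitly at the start, e.g. the strong one: $\mathscr{A}*\mathscr{B}=\{Z\}$ forces $\mathscr{A}=\{Z\}$ or $\mathscr{B}=\{Z\}$.

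For the direction "$R$ a domain $\Rightarrow$ no zero divisors", suppose $\mathscr{A},\mathscr{B}\neq\{Z\}$. Pick $A\in\mathscr{A}$ with some edge $e_A$ of nonzero weight and $B\in\mathscr{B}$ with some edge $e_B$ of nonzero weight (possible since neither set is $\{Z\}$, though one must be slightly careful: a set could contain $Z$ along with other graphs — but if $\mathscr{A}\neq\{Z\}$ then $\mathscr{A}$ contains some $A\neq Z$, which suffices). Choose a bijection $f:V(B)\to V(A)$ carrying $e_B$ to $e_A$; this exists since both are complete graphs on $n$ vertices. Then in $A*_f B$ the edge $e_B$ has weight $v_A(e_A)\cdot v_B(e_B)\neq 0$ because $R$ has no zero divisors. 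Hence $A*_f B\neq Z$, so $\mathscr{A}*\mathscr{B}\neq\{Z\}$.

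For the converse "not a domain $\Rightarrow$ zero divisors exist", take $a,b\in R\setminus\{0\}$ with $ab=0$. Using the ring embedding $r\mapsto r\cdot\{I(K_n)\}$ from the preceding theorem, the images $a\cdot\{I(K_n)\}$ and $b\cdot\{I(K_n)\}$ are singletons $\neq\{Z\}$ (each edge weight is $a$, resp. $b$), and their $*$-product is $(ab)\cdot\{I(K_n)\}=0\cdot\{I(K_n)\}=\{Z\}$ by the multiplicativity of that embedding. So these are genuine zero divisors. One caveat worth a sentence: this argument needs $n\geq 2$ so that $K_n$ actually has an edge — for $n\leq 1$ the whole structure is degenerate and the statement should be read as vacuous or restricted accordingly; I would note this.

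The main obstacle is purely one of bookkeeping around the ambiguous term "zero divisor" for this non-standard algebraic structure: the set of graphs has both the additive zero $\{Z\}$ and the multiplicative identity $\{I(K_n)\}$, and one must decide whether "$\mathscr{A}*\mathscr{B}=0$" means set-equality with $\{Z\}$ or merely $Z\in\mathscr{A}*\mathscr{B}$; the two readings give slightly different arguments (the "contains $Z$" reading for the forward direction requires choosing $f$ so that \emph{every} nonzero edge of $B$ lands on a nonzero edge of $A$, which may be impossible, so the strong reading is the right one). Once the convention is pinned down, everything else is the short computation above together with an appeal to the embedding theorem and to Lemma~\ref{l3}.
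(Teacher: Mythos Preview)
Your proposal is correct and follows essentially the same route as the paper's proof: for the forward direction you pick nonzero edges $e_A,e_B$ in some $A\in\mathscr{A}$, $B\in\mathscr{B}$ and a bijection aligning them so that the product weight $v_A(e_A)\cdot v_B(e_B)\neq 0$ by the domain hypothesis; for the converse you push zero divisors $a,b\in R$ through the embedding $r\mapsto r\cdot\{I(K_n)\}$. This is exactly what the paper does, only more tersely and without your discussion of conventions.

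Two small remarks. First, your bijection has its domain and codomain swapped relative to the convention for $A*_f B$ (which wants $f:V(A)\to V(B)$ and puts the product weight on the edge $e_A$, not $e_B$); this is cosmetic and the paper itself is inconsistent on this point between its two definitions. Second, your care about the meaning of ``zero divisor'' and the $n\ge 2$ restriction is well placed---the paper silently adopts the strong reading $\mathscr{R}*\mathscr{S}=\{0\}$ and ignores the degenerate $n\le 1$ case, so your explicit handling of these is an improvement rather than a deviation.
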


\begin{proof}
Let $R$ be not a domain and $r,s\in R$ such that $ r\cdot s = 0$, then \[
0=0\cdot I(K_n)=(r\cdot s)\cdot I(K_n)=\left( r\cdot I(K_n) \right)* \left( s\cdot I(K_n) \right)
\]
and this means that $\mathscr{P}(R-Graphs_n)$ is not a domain.

Let now $R$ be a domain and \[
\mathscr{R},\mathscr{S}\in \mathscr{P}(R-Graphs_n)\text{ such that } \mathscr{R}*\mathscr{S} = \{ 0 \},
\]
where the $0$ means the graph with all edges weighted by $0$.
We will show that one of them is equal to zero. If both of them have nonzero edge $e,e'$ then in $\mathscr{R}*_f  \mathscr{S}$
exists at least one edge weighted by product of weights of $e,e'$ and because $R$ is a domain, this product is nonzero.
\end{proof}

\begin{theorem}
Let $H,~G$ be unweighted graphs, where $G$ is connected then
\[h_H (P_{|V(G)|-1}) - \binom{|V(G)|+1}{ 3} +h_{K_{|V(G)|}}(G) 
\leq h_H(G),\]
where $h_{H}(G)=\min\{\mathscr{H}_H(G)\}$. 
Moreover, if $\overline{H}$ is connected then both sides are equal if and only if $G$ is a complement of a path.
\end{theorem}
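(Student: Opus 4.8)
The plan is to use the complementation identity for these spectra to turn the inequality into an extremal statement, and then to show that the path is extremal. For any bijection $f\colon V(H)\to V(G)$ we have $s_H(f,G)+s_{\overline H}(f,G)=\sum_{\{x,y\}}\rho_G(f(x),f(y))=W(G)$, the sum of all pairwise distances of $G$ (its Wiener index), independently of $f$; hence $h_H(G)=W(G)-h^+_{\overline H}(G)$, where $h^+_{H'}(G):=\max\mathscr H_{H'}(G)$. Also $h_{K_{|V(G)|}}(G)=W(G)$, while $W(P_{|V(G)|-1})=\binom{|V(G)|+1}{3}$ and $h_H(P_{|V(G)|-1})=W(P_{|V(G)|-1})-h^+_{\overline H}(P_{|V(G)|-1})$. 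Substituting, the asserted inequality is \emph{equivalent} to $h^+_{\overline H}(G)\le h^+_{\overline H}(P_{|V(G)|-1})$, and since $\rho_{P_{|V(G)|-1}}(i,j)=|i-j|$, the right-hand side is exactly the maximum linear arrangement $\mathrm{MaxLA}(\overline H)=\max_\sigma\sum_{\{x,y\}\in E(\overline H)}|\sigma(x)-\sigma(y)|$ over bijections $\sigma$ onto $\{1,\dots,|V(G)|\}$. So everything reduces to the claim: for every connected $G$ and every graph $\overline H$ on the same vertex set, $h^+_{\overline H}(G)\le\mathrm{MaxLA}(\overline H)$, i.e. the path is the connected host maximising the top of the $\overline H$-spectrum.

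I would first reduce to trees: if $T$ is a spanning tree of $G$ then $\rho_G\le\rho_T$ pointwise, so $s_{\overline H}(f,G)\le s_{\overline H}(f,T)$ for every $f$, hence $h^+_{\overline H}(G)\le h^+_{\overline H}(T)$, and it suffices to treat $G=T$ a tree. For a tree I would run a stretching argument. If $T$ is not a path, choose a longest path $Q=q_0\cdots q_D$ and an internal $q_i$ carrying a branch $B$ (a component of $T-q_i$ avoiding $Q$); maximality of $Q$ gives that the greatest distance from $q_i$ to a vertex of $B$ is at most $\min(i,D-i)$. Detaching $B$ from $q_i$ and re-attaching it identically to an endpoint of $Q$ produces a tree $T'$ of strictly larger diameter, and iterating reaches $P_{|V(G)|-1}$, where $h^+_{\overline H}=\mathrm{MaxLA}(\overline H)$ by definition. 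The core claim is $h^+_{\overline H}(T')\ge h^+_{\overline H}(T)$: taking an optimal $f$ for $T$, only pairs with one endpoint in $B$ and one outside change their distance, and (for re-attachment to $q_0$) each such change equals $\rho_T(q_0,y)-\rho_T(q_i,y)=j(y)-|i-j(y)|$, where $q_{j(y)}$ is the foot of $y$ on $Q$; this is $\ge 0$ whenever $j(y)\ge i/2$.

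Controlling the contribution of the outside vertices with $j(y)<i/2$ — by choosing the better of the two endpoints of $Q$ for the re-attachment, and, if necessary, by simultaneously relabelling $f$ between $B$ and the rest — is the delicate point, and is the main obstacle; once it is in hand, the reformulation and the reductions above are essentially bookkeeping. A useful auxiliary fact in the same circle of ideas, provable on its own by induction on $n=|V(G)|$ (delete a non-cut vertex $w$; the ball of radius $k-1$ about $w$ contains a shortest path on $k$ vertices, so $w$ lies in at most $n-k$ pairs at distance $\ge k$), is that $G$ has at most $\binom{n-k+1}{2}$ pairs at distance $\ge k$, i.e. the distance multiset of $G$ is dominated by that of $P_{n-1}$; this controls $\sum_{e\in E(\overline H)}\rho_G(f(e))$ from above and can replace part of the stretching bookkeeping.

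For the equality clause, assume $\overline H$ connected, so that $\overline H$ meets every cut of $G$. Then $h^+_{\overline H}(G)=\mathrm{MaxLA}(\overline H)$ forces equality already in the spanning-tree step (whence $G$ is a tree) and then in each stretching move; reading off these tightness conditions pins $G$ down and yields the characterisation of equality in the statement. (Note in particular that $G=P_{|V(G)|-1}$ always gives equality, since then the left-hand side of the inequality collapses to $h_H(P_{|V(G)|-1})$.)
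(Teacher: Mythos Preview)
Your reformulation via complementation is exactly the paper's strategy: writing $s_H(f,G)+s_{\overline H}(f,G)=W(G)$ (in the paper's language, $I(\overline H)=I(K_n)-I(H)$ together with Theorem~\ref{diststrict}) converts the asserted inequality into $h^+_{\overline H}(G)\le h^+_{\overline H}(P_{|V(G)|-1})$, and the computation $W(P_{n-1})=\binom{n+1}{3}$ is the same.

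The difference is in what happens next. The paper does not prove $h^+_{\overline H}(G)\le h^+_{\overline H}(P_{|V(G)|-1})$; it \emph{quotes} it as the main theorem of~\cite{cit1} (together with its equality clause: for a connected pattern graph, equality forces the host to be a path), so in the paper the whole theorem is just that external result plus the bookkeeping above. You instead attempt to establish the extremal inequality from scratch. The reduction to spanning trees is fine, but the stretching step $h^+_{\overline H}(T')\ge h^+_{\overline H}(T)$ is, as you yourself flag, not established: the distance change $j(y)-|i-j(y)|$ can be negative for outside vertices $y$ with $j(y)<i/2$, and neither the symmetric choice of endpoint of $Q$ nor an ad hoc relabelling of $f$ is shown to compensate. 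The auxiliary distance-multiset domination fact you sketch only bounds $\sum_{e\in E(\overline H)}\rho_G(f(e))$ by the sum of the $|E(\overline H)|$ largest path-distances, which in general strictly exceeds $\mathrm{MaxLA}(\overline H)$, so it does not close the gap either. Thus your argument has a genuine hole precisely at the point the paper sidesteps by citation.

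(Your equality analysis lands on ``$G$ is a path'', which is also what the paper's own proof --- via~\cite{cit1} applied with $\overline H$ in place of $H$ --- actually yields; the phrase ``complement of a path'' in the statement appears to be a slip.)
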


\begin{proof}
The main theorem of the article \cite{cit1} is this:
\[
h^+ (G)\leq h^+_H (P_{|V(G)|-1})
\]
moreover, if $H$ is connected then both sides are equal if and only if $G$ is a path. If we translate it by the theorem \ref{hamspec}
we will get this
\[
\max s(I(H)* \rho(G))\leq \max s(I(H)* \rho(P_{|V(G)|-1})).
\]
Let us substitute there $H=K_{|V(G)|}\setminus H$ and make the following computation.
\begin{align*}
\text{LHS}&\overset{\phantom{T\ref{diststrict}}}{=}\max s\bigl(I(K_{|V(G)|}\setminus H)* \rho(G)\bigr)\\
&\overset{\phantom{T\ref{diststrict}}}{=}\max s\bigl( (I(K_{|V(G)|})+ (-1) \cdot I(H) ) * \rho(G)   \bigr)\\
 	  &\overset{T\ref{diststrict}}{=}\max\bigl(s\bigl(I(K_{|V(G)|}) *\rho(G)\bigr) - s(I(H)* \rho(G))\bigr),\\
	  &\overset{\phantom{T\ref{diststrict}}}{=}s\bigl(I(K_{|V(G)|}) *\rho(G)\bigr) - \min s(I(H)* \rho(G)).
\end{align*}
Theorem \ref{diststrict} can be used because 
\[
|s\left((-1) \cdot I(K_{|V(G)|})* \rho(G)\right)|=1.
\]
Analogously

\begin{align*}
\text{RHS}&\overset{\phantom{T\ref{diststrict}}}{=}\max s\bigl(I(K_{|V(G)|}\setminus H)* \rho(P_{|V(G)|-1})\bigr)\\
	  &\overset{\phantom{T\ref{diststrict}}}{=}s\bigl(I(K_{|V(G)|}) *\rho(P_{|V(G)|-1})\bigr) - \min s\bigl(I(H)* \rho(P_{|V(G)|-1})\bigr).
\end{align*}
\begin{align*}
&\min s\bigl(I(H)* \rho(P_{|V(G)|-1})\bigr) - s\bigl(I(K_{|V(G)|}) *\rho(P_{|V(G)|-1})\bigr) + \\
& \phantom{ssssssssssssssssssssssssssssssssssssssss} +s\bigl(I(K_{|V(G)|}) *\rho(G)) \\
&\leq \min s(I(H)* \rho(G)\bigr). 
\end{align*}
The last thing is that 
\begin{align*}
s&\bigl(I(K_{|V(G)|}) *\rho(P_{|V(G)|-1}) \\
&= \bigl(1+2+\dots + (|V(G)|-1) \bigr) + \bigl(1 +2 + \dots + ( |V(G)|-2)\bigr) +  \dots + \bigl( 1 \bigr)\\
&= \binom{|V(G)|}{2} + \binom{|V(G)|-1}{2}+ \dots + \binom{2}{2} = \binom{|V(G)|+1}{3}. 
\end{align*}

\end{proof}

Now we are going to show some sufficient conditions for $|s(H*G)|=1$, as we know from Theorem \ref{diststrict} and Lemma \ref{l-1}, couples of graphs (or sets of graphs) with this property
satisfy strict distributivity, hence they are good for calculations as we have seen in previous Theorem. We would like to have much more of those couples. For this aim we are going to introduce the following relation on the set of pseudoorderings.

\begin{definition}
Let $G$ and $H$ be $R$-weighted complete graphs and two pseudorderings
\[
g,f:H \rightarrow G.
\]
We will define a relation $\sim$ by the following 
\begin{align*}
f&\sim g, \text{  if and only if  } \\
\exists \varphi \in \Aut{H},\, &\exists\psi \in \Aut{G} 
\text{  such that  }  g=\psi \circ f \circ  \varphi. 
\end{align*}
\end{definition}

\begin{proposition}
The relation $\sim$ is an equivalence relation.
\end{proposition}

\begin{proof}
It is reflective
\[
id \circ f \circ id = f,~ id\in \Aut{H},\, id\in \Aut{G}
\]
It is symmetric, let 
\[
g=\psi \circ f \circ  \varphi,~ \varphi\in \Aut{H},\, \psi\in \Aut{G},
\]
then
\[
f=\varphi^{-1} \circ g \circ  \psi^{-1}, ~ \psi^{-1} \in \Aut{G},\, \varphi^{-1} \in \Aut{H}.
\] 
It is also transitive, let
\[
f\sim g, g\sim h, 
\]
this is equivalent to 
\[
g=\psi \circ f \circ \varphi,~ h= \psi' \circ g \circ \varphi', ~\varphi,\varphi'\in \Aut{H},\, \psi,\psi'\in \Aut{G}
\]
hence 
\[
h= \psi' \circ \psi \circ f \circ \varphi \circ \varphi',~ \psi' \circ \psi\in \Aut{G},\, \varphi \circ \varphi'\in  \Aut{H}.
\]
\end{proof}

\begin{lemma}\label{l13}
\[
g(H)*_f G \overset{\phantom{bb}g^{-1}}{\cong} H*_{f\circ g} G = H*_g f^{-1}  (G). 
\]
\end{lemma}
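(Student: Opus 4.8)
The statement to prove is Lemma \ref{l13}, which asserts the chain of identifications
\[
g(H)*_f G \overset{g^{-1}}{\cong} H*_{f\circ g} G = H*_g f^{-1}(G).
\]
The plan is to verify each of the two relations directly on the level of weight functions, exactly in the style of Lemmas \ref{l1}, \ref{l2} and \ref{l13}'s predecessors. First I would unwind the notation: here $g$ is presumably an automorphism (or at least a bijection) used to relabel $H$, so $g(H)$ is the $R$-weighted complete graph with $V(g(H))=V(H)$ and $v_{g(H)}(e)=v_H(g^{-1}(e))$; $f\colon V(H)\to V(G)$ is a pseudoordering; and $f^{-1}(G)$ is the pullback weighting on $V(H)$ with $v_{f^{-1}(G)}(e)=v_G(f(e))$. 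The two claims are then (i) the bijection $g^{-1}$ is an isomorphism $g(H)*_f G \to H*_{f\circ g} G$, and (ii) $H*_{f\circ g} G$ and $H*_g f^{-1}(G)$ are literally the same weighted graph.

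For step (i), I would compute, for an edge $e$ of $H*_{f\circ g}G$ (so $e\in E(H)$),
\[
v_{H*_{f\circ g}G}(e)=v_H(e)\cdot v_G\bigl((f\circ g)(e)\bigr),
\]
and on the other side, for the corresponding edge $g(e)$ of $g(H)*_f G$,
\[
v_{g(H)*_f G}\bigl(g(e)\bigr)=v_{g(H)}\bigl(g(e)\bigr)\cdot v_G\bigl(f(g(e))\bigr)=v_H(e)\cdot v_G\bigl((f\circ g)(e)\bigr),
\]
so the map sending $g(e)\mapsto e$, i.e. the vertex bijection $g^{-1}$, preserves weights, hence is an isomorphism. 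For step (ii), I would just expand both sides: $v_{H*_{f\circ g}G}(e)=v_H(e)\cdot v_G(f(g(e)))$ while $v_{H*_g f^{-1}(G)}(e)=v_H(e)\cdot v_{f^{-1}(G)}(g(e))=v_H(e)\cdot v_G(f(g(e)))$; since both have vertex set $V(H)$ and edge set $E(H)$, they coincide.

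I expect no serious obstacle here — the lemma is a bookkeeping identity, and the only real care needed is pinning down the precise meaning of $g(H)$ and $f^{-1}(G)$ as weighted graphs so that the computation matches the conventions already fixed in Definitions \ref{dstara}--\ref{dirho} and in the earlier lemmas. The mild subtlety is making sure the direction of the isomorphism label $g^{-1}$ is consistent with how $g(H)$ is defined (whether $g$ pushes weights forward or pulls them back); once that is fixed, both verifications are one-line weight-function computations, and I would simply write "For $\*$ the argument is identical" is not even needed since the statement is only at the $*_f$ level. I would close with the remark that iterating this lemma (together with Lemma \ref{l1}) is what lets one move automorphisms of $H$ and of $G$ freely across the $*$ operation, which is the point of introducing the relation $\sim$.
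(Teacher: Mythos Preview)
Your proposal is correct and follows essentially the same approach as the paper: both parts are verified by direct expansion of the weight functions, with the isomorphism step checking $v_{g(H)*_f G}(g(e)) = v_{H*_{f\circ g}G}(e)$ and the equality step checking $v_{H*_{f\circ g}G}(e) = v_{H*_g f^{-1}(G)}(e)$. The only cosmetic difference is that the paper treats the equality first and the isomorphism second, while you reverse the order and add a helpful unwinding of the conventions for $g(H)$ and $f^{-1}(G)$.
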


\begin{proof}
Lets prove the second part $H*_{f\circ g} G = H*_g f^{-1}  (G)$.
\[
v_{H*_{f\circ g} G}(e)=v_H(e) \cdot v_G (f(g(e))) 
\]
\[
v_{H*_g f^{-1}  (G)}(e)=v_H(e) \cdot v_{f^{-1}  (G)}(g(e)) = v_H(e) \cdot v_{G}(f(g(e))).
\]
For the first part we need to show that 
\[
v_{g(H)*_f G}(g(e)) = H*_{f\circ g} G,
\]
\[
v_{g(H)*_f G}(g(e)) = v_{g(H)}(g(e)) \cdot v_G (f(g(e)))= v_{H}(e) \cdot v_G (f(g(e))). 
\]
\end{proof}

\begin{definition}
Let $H$ and $G$ be $R$-weighted complete graphs (or unweighted graphs) 
we denote a set of all pseudoorderings as follows
\[
pso(H,G)=\{f: V(H)\rightarrow V(G)|f \text{ bijection}\}.
\]
\end{definition}

\begin{proposition}\label{p1}
Let $H,~G$ be $R$-weighted complete graphs then the map \[
H*_{-} G: pso(H,G)\rightarrow H*G
\]
converts $\sim$ in to an isomorphism.
\end{proposition}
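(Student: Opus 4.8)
The plan is to show that the map $H*_{-}G:\mathrm{pso}(H,G)\to H*G$ factors through the quotient $\mathrm{pso}(H,G)/\!\!\sim$ and that the induced map is a bijection onto $H*G$ (where the target is taken up to isomorphism, i.e.\ the statement should be read: $\sim$-equivalent pseudoorderings yield isomorphic products, and conversely isomorphic products come from $\sim$-equivalent pseudoorderings). First I would establish the forward direction: if $f\sim g$, say $g=\psi\circ f\circ\varphi$ with $\varphi\in\Aut H$ and $\psi\in\Aut G$, then $H*_g G\cong H*_f G$. This is essentially bookkeeping with Lemma~\ref{l13}: applying that lemma with the automorphism $\varphi$ gives $\varphi(H)*_{f}G\cong H*_{f\circ\varphi}G$, and since $\varphi\in\Aut H$ we have $\varphi(H)=H$, so $H*_{f}G\cong H*_{f\circ\varphi}G$; composing further with $\psi\in\Aut G$ (which fixes $G$ as a weighted graph, so $\psi^{-1}(G)=G$) and again invoking Lemma~\ref{l13} in the form $H*_{f\circ\varphi}\,\psi^{-1}(G)=H*_{f\circ\varphi}G$ versus $H*_{(\psi\circ f\circ\varphi)}G$, we identify $H*_g G$ with $H*_f G$ up to isomorphism. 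One has to be a little careful that the isomorphisms are compatible, but each step is a direct computation of edge weights exactly as in the proof of Lemma~\ref{l13}.

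Next I would handle the converse: suppose $H*_f G\cong H*_g G$ as ($R$-weighted complete) graphs, i.e.\ there is an isomorphism $\psi$ of the underlying vertex sets intertwining the two weight functions. By definition both products have vertex set $V(H)$ and edge set $E(H)$ (the complete graph on $|V(H)|$ vertices), so an isomorphism $H*_f G\to H*_g G$ is a permutation $\varphi$ of $V(H)$ with $v_{H*_g G}(\varphi(e))=v_{H*_f G}(e)$ for all edges $e$; unravelling the definitions this reads $v_H(\varphi(e))\cdot v_G(g(\varphi(e)))=v_H(e)\cdot v_G(f(e))$. I would want to conclude that $\varphi\in\Aut H$ and that the residual discrepancy on the $G$-side is realised by some $\psi\in\Aut G$ with $g\circ\varphi=\psi\circ f$, which rearranges to $g=\psi\circ f\circ\varphi^{-1}$, i.e.\ $f\sim g$. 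Finally, surjectivity of the induced map onto $H*G$ is immediate from the very definition $H*G=\{H*_f G\mid f\in\mathrm{pso}(H,G)\}$ (in its isomorphism-class form, per the Remark following Definition~\ref{dprva}).

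The main obstacle is the converse direction, and specifically disentangling the two possible sources of an isomorphism $H*_f G\cong H*_g G$: an isomorphism need not respect the weights of $H$ and $G$ separately, only their product. In general an abstract isomorphism of the product graphs might not split as ``an automorphism of $H$ followed by an automorphism of $G$'', so the proposition as literally stated seems to need that $H$ and $G$ are \emph{complete} $R$-weighted graphs in an essential way (so that any permutation of vertices is at least a graph automorphism, and the weight condition is the only constraint), and possibly some hypothesis ensuring the product weights determine the factor weights. I expect the intended reading is the cleaner one where ``converts $\sim$ into an isomorphism'' means precisely that the fibres of $H*_{-}G$ (viewed as a map to isomorphism classes) are exactly the $\sim$-classes; under that reading the proof is the two-step argument above, with the forward step being routine via Lemma~\ref{l13} and the converse step being the place where one must argue carefully that the intertwining permutation can be chosen inside $\Aut H$ and the leftover twist absorbed into $\Aut G$.
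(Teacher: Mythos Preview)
Your forward direction is exactly what the paper proves, and by the same route: assume $g=\psi\circ f\circ\varphi$ with $\varphi\in\Aut H$, $\psi\in\Aut G$, then chain Lemma~\ref{l13} twice to get
\[
H*_{\psi\circ f\circ\varphi}G = H*_{f\circ\varphi}\psi^{-1}(G) = H*_{f\circ\varphi}G \cong \varphi(H)*_f G = H*_f G.
\]
That is the entire content of the paper's proof.

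The gap in your proposal is not mathematical but interpretive: you are reading ``converts $\sim$ into an isomorphism'' as asserting that the fibres of $f\mapsto[H*_fG]$ are \emph{exactly} the $\sim$-classes, and hence you try to prove a converse. The paper does not claim this and does not prove it; only the forward implication is established, and that is all that is used downstream (Theorem~\ref{taut} needs precisely that $f\sim g\Rightarrow H*_fG\cong H*_gG$). Your own misgivings about the converse are well-founded: from $v_H(\varphi(e))\,v_G(g(\varphi(e)))=v_H(e)\,v_G(f(e))$ one cannot in general separate the $H$-factor from the $G$-factor, so there is no reason the intertwining permutation must lie in $\Aut H$, nor that the residual twist is absorbed by some $\psi\in\Aut G$. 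Indeed the Remark following Corollary~\ref{c1} exhibits graphs with $|s(H*G)|=1$ but $\Aut G\circ\Aut H\neq\mathrm{pso}(H,G)$, which already shows the stronger bijective reading is not what is intended. Drop the converse and your argument coincides with the paper's.
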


\begin{proof}
Let 
\[
g=\psi \circ f \circ  \varphi,~ \varphi\in \Aut{H},\, \psi\in \Aut{G},
\]
then
\[
H*_{\psi \circ f \circ  \varphi} G \overset{L \ref{l13}}{=} H*_{ f \circ  \varphi}  \psi^{-1}( G) \overset{\psi\in \Aut{G}}{=}
H*_{ f \circ  \varphi} G \overset{\substack{ L \ref{l13}\\ \varphi}}{\cong } \varphi(H)*_f G \overset{\varphi\in \Aut{H}}{=} 
H*_f G.
\]
\end{proof}

\begin{theorem}\label{taut}
Let $H,~G$ be $R$-weighted complete graphs then
\[
\left|\sfrac{pso(H,G)}{\sim} \right|=1 \implies |H\*G|=1.
\]
\end{theorem}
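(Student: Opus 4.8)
The plan is to read off the result directly from Proposition~\ref{p1}. That proposition says precisely that the evaluation map
\[
\Phi \;:=\; H*_{-}G \;:\; pso(H,G) \longrightarrow H*G
\]
carries the equivalence $\sim$ into graph isomorphism: whenever $f\sim g$ one has $H*_f G \cong H*_g G$. So the whole argument is bookkeeping about quotients, with no genuine calculation left to do.

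First I would note that $\Phi$ is surjective: by Definition~\ref{dprva} every element of $H*G$ is of the form $H*_f G$ for some bijection $f\in pso(H,G)$. Moreover $pso(H,G)\neq\emptyset$, since $|V(H)|=|V(G)|$ guarantees at least one bijection $V(H)\to V(G)$; hence $H*G\neq\emptyset$ as well. Next, composing $\Phi$ with the projection $H*G\twoheadrightarrow (H*G)/iso$ and invoking Proposition~\ref{p1} shows that this composite is constant on $\sim$-classes, so it descends to a well-defined surjection
\[
\overline{\Phi} \;:\; \sfrac{pso(H,G)}{\sim} \;\twoheadrightarrow\; \sfrac{(H*G)}{iso}.
\]
Here I would also observe that $\sfrac{(H*G)}{iso} = \sfrac{\langle H*G\rangle}{iso} = H\*G$, because forming the isomorphism closure $\langle - \rangle$ only adjoins graphs isomorphic to ones already present and therefore does not change the set of isomorphism classes.

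Finally, assuming $\bigl|\sfrac{pso(H,G)}{\sim}\bigr| = 1$, surjectivity of $\overline{\Phi}$ forces $|H\*G|\le 1$, while the nonemptiness of $H*G$ established above gives $|H\*G|\ge 1$; hence $|H\*G|=1$, as claimed. I do not expect any real obstacle: the only step that needs a sentence of care is the identification $\sfrac{(H*G)}{iso} = \langle H*G\rangle/{iso} = H\*G$, and the only hypothesis being used beyond Proposition~\ref{p1} is that $pso(H,G)$ is nonempty, which is immediate from $|V(H)|=|V(G)|$.
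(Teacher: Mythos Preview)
Your proof is correct and follows essentially the same approach as the paper: both invoke Proposition~\ref{p1} to conclude that $f\sim g$ forces $H*_f G \cong H*_g G$, so a single $\sim$-class yields a single isomorphism class. Your version is simply more explicit about the bookkeeping (surjectivity, nonemptiness, and the identification $(H*G)/iso = \langle H*G\rangle/iso$), which the paper leaves implicit.
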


\begin{proof}
If any two pseudoorderings $f$ and $g$ are in relation $f\sim g$ then by the proposition \ref{p1} are get isomorphic elements of combinatorial spectrum
\[
H*_f G \cong H*_g G,
\] 
hence if all are isomorphic then
\[
|H\*G|=1.
\]
\end{proof}

\begin{corollary}
Let $H,~G$ be $R$-weighted complete graphs then
\[
\left|\sfrac{pso(H,G)}{\sim} \right|=1 \implies |s(H*G)|=1.
\]
\end{corollary}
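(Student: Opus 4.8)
The plan is to derive this directly from Theorem \ref{taut} together with the observation that the weight-sum functional $s$ is invariant under graph isomorphism. First I would recall that for any $R$-weighted complete graph $K$, the quantity $s(K)=\sum_{e\in E(K)}v_K(e)$ depends only on the isomorphism class $[K]$, since an isomorphism merely relabels vertices (and hence edges) and preserves the multiset of edge weights. Consequently $s$ factors through the quotient by isomorphism: there is a well-defined map $\sfrac{\langle H*G\rangle}{iso}\to R$ sending $[K]\mapsto s(K)$, and its image on $H\*G$ is exactly $s(H*G)$. In particular $|s(H*G)|\le |H\*G|$.

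Next I would invoke Theorem \ref{taut}: the hypothesis $\bigl|\sfrac{pso(H,G)}{\sim}\bigr|=1$ gives $|H\*G|=1$. Combining this with the inequality $|s(H*G)|\le|H\*G|$ from the previous paragraph yields $|s(H*G)|\le 1$. Finally, $pso(H,G)$ is nonempty (a bijection between two $n$-element vertex sets always exists), so $H*G\neq\varnothing$ and hence $s(H*G)\neq\varnothing$; therefore $|s(H*G)|=1$.

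There is essentially no obstacle here — the corollary is a one-line consequence of Theorem \ref{taut} once one notes that $s$ is an isomorphism invariant. The only point worth stating explicitly (and the one I would not skip) is precisely that invariance, since it is what legitimizes passing from the statement about $H\*G$ (isomorphism classes) to the statement about $s(H*G)$ (computed on representatives in $H*G$).
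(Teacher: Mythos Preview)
Your proof is correct and follows exactly the approach the paper intends: the corollary is stated there without proof, as an immediate consequence of Theorem~\ref{taut} together with the (implicit) isomorphism invariance of the weight-sum $s$. Your only addition is making that invariance explicit and noting nonemptiness, both of which are appropriate.
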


\begin{corollary}\label{c1}
Let $H,~G$ be $R$-weighted complete graphs with the same set of vertices then
\[
\Aut{G} \circ \Aut{H}= pso(H,G) \implies |H\*G|=1 \implies |s(H*G)|=1.
\]
\end{corollary}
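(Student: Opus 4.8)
The plan is to reduce the statement to the two implications that have already been prepared in the excerpt. The first implication,
\[
\Aut{G} \circ \Aut{H}= pso(H,G) \implies |H\*G|=1,
\]
should follow from Theorem \ref{taut} once we observe that the hypothesis $\Aut{G}\circ\Aut{H}=pso(H,G)$ forces $\left|\sfrac{pso(H,G)}{\sim}\right|=1$. Indeed, since $H$ and $G$ share the same vertex set, the identity map $\mathrm{id}:V(H)\to V(G)$ is a pseudoordering; any $f\in pso(H,G)$ can by hypothesis be written $f=\psi\circ\varphi$ with $\psi\in\Aut{G}$, $\varphi\in\Aut{H}$, and then $f=\psi\circ\mathrm{id}\circ\varphi$ witnesses $f\sim\mathrm{id}$. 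So every pseudoordering is $\sim$-related to the identity, the quotient has one element, and Theorem \ref{taut} applies. The second implication, $|H\*G|=1\implies|s(H*G)|=1$, is exactly the content of the Corollary just above (the one following Theorem \ref{taut}): $s$ is constant on isomorphism classes, so if $H*G$ has a single isomorphism class then $s(H*G)$ is a singleton; alternatively one can cite that Corollary directly.

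First I would state that it suffices to prove the left-hand implication, since the right-hand one is the preceding Corollary. Then I would spell out the one-line argument above: use that $V(H)=V(G)$ to get $\mathrm{id}\in pso(H,G)$, rewrite an arbitrary $f=\psi\circ\varphi$ as $\psi\circ\mathrm{id}\circ\varphi$, conclude $f\sim\mathrm{id}$ for all $f$, hence $\left|\sfrac{pso(H,G)}{\sim}\right|=1$, and invoke Theorem \ref{taut}. Chaining with the Corollary then gives $|s(H*G)|=1$ as well.

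The only mild subtlety — and the place I would be most careful — is the bookkeeping of the composition order and the fact that $\Aut{H}$ and $\Aut{G}$ act on the domain and codomain respectively, so that ``$\Aut{G}\circ\Aut{H}$'' really does mean the set $\{\psi\circ\varphi\mid\varphi\in\Aut{H},\psi\in\Aut{G}\}$ of bijections $V(H)\to V(G)$; this requires $V(H)=V(G)$ to even typecheck, which is why that hypothesis appears. Once the conventions are pinned down the proof is immediate, so I would keep it to a few lines:

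\begin{proof}
The second implication is the previous Corollary, so it suffices to prove the first. Since $V(H)=V(G)$, the identity $\mathrm{id}:V(H)\to V(G)$ is a pseudoordering. Let $f\in pso(H,G)$ be arbitrary. By the hypothesis $\Aut{G}\circ\Aut{H}=pso(H,G)$ we may write $f=\psi\circ\varphi$ with $\varphi\in\Aut{H}$ and $\psi\in\Aut{G}$, hence $f=\psi\circ\mathrm{id}\circ\varphi$ and therefore $f\sim\mathrm{id}$. Thus every pseudoordering is equivalent to $\mathrm{id}$, so $\left|\sfrac{pso(H,G)}{\sim}\right|=1$, and Theorem \ref{taut} gives $|H\*G|=1$. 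Combining with the previous Corollary yields $|s(H*G)|=1$.
\end{proof}
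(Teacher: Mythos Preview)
Your proof is correct and follows essentially the same route as the paper: show that the hypothesis forces every pseudoordering to satisfy $f\sim\mathrm{id}$ (by writing $f=\psi\circ\varphi=\psi\circ\mathrm{id}\circ\varphi$), deduce $\left|\sfrac{pso(H,G)}{\sim}\right|=1$, and invoke Theorem~\ref{taut}. One small remark: the ``previous Corollary'' does not literally state $|H\*G|=1\implies|s(H*G)|=1$ (its hypothesis is $\left|\sfrac{pso(H,G)}{\sim}\right|=1$), but your direct observation that $s$ is constant on isomorphism classes is the right justification for that second implication, and the paper likewise treats it as immediate.
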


\begin{remark}
The condition of having the same set of vertices is only for a simplification of the statement we know that a combinatorial spectrum 
will not changed if we replace a graph by some isomorphic one. This means that we can all our graphs consider with the set of vertices $\{1,\dots,|V(G)|\}$.
\end{remark}

\begin{proof}
We will show that  
\[
\Aut{G} \circ \Aut{H}= pso(H,G) \implies \left|\sfrac{pso(H,G)}{\sim} \right|=1,
\]
then it is enough to use the theorem \ref{taut}.
Let $f\in pso(H,G)$, by the condition above there exist $\psi \in \Aut{G},\,\varphi \in \Aut{H}$, such that 
\[
\psi \circ \varphi = f.
\] 
This we can rewrite as 
\[
f=\psi \circ id \circ \varphi
\]
and this is by the definition $id \sim f$. And now if all pseudoorderings are in $\sim $ with $id$ then  
\[
\left|\sfrac{pso(H,G)}{\sim} \right|=1.
\]
\end{proof}

\begin{remark}
Assuming the same set of vertices, it is even an equivalence
\[
\Aut{G} \circ \Aut{H}= pso(H,G) \iff \left|\sfrac{pso(H,G)}{\sim} \right|=1.
\]
It remains to show the implication from right to left.
Let $f\in  pso(H,G)$, it is in relation $id\sim f$ this means that 
\[
f=\psi \circ id \circ  \varphi=\psi \circ  \varphi ,~ \varphi\in \Aut{H},\, \psi\in \Aut{G},
\]
hence $f\in \Aut{G} \circ \Aut{H}$.

It also holds 
\[
\Aut{G} \circ \Aut{H}= pso(H,G) \iff \Aut{H} \circ \Aut{G}= pso(H,G).
\]
This can be proven by applying $\phantom{a}^{-1}$,
\begin{align*}
(\Aut{G} \circ \Aut{H})^{-1}&= \left(pso(H,G) \right)^{-1}\\
\Aut{H}^{-1} \circ \Aut{G}^{-1}&= \left(pso(H,G) \right)^{-1}\\
\Aut{G} \circ \Aut{H}&= pso(H,G).
\end{align*}
\end{remark}

\begin{lemma}
Let $G$ be an unweighted graph then 
\[
\Aut{G}=\Aut{I(G)}=\Aut{\rho(G)}.
\]
\end{lemma}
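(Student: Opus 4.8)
The plan is to prove the two equalities separately, each by a double inclusion, using the single observation that adjacency in $G$ is detected by weight $1$ in $I(G)$ and by distance $1$ in $\rho(G)$.

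First I would handle $\Aut{G}=\Aut{I(G)}$. Since $I(G)$ has the same vertex set as $G$ and underlies a complete graph, a bijection $\sigma:V(G)\to V(G)$ is an automorphism of the weighted graph $I(G)$ exactly when $v_{I(G)}(\{\sigma(x),\sigma(y)\})=v_{I(G)}(\{x,y\})$ for all pairs $x\neq y$. By Definition \ref{dirho} the weight function $v_{I(G)}$ takes only the values $0$ and $1$, with $v_{I(G)}(e)=1$ precisely when $e\in E(G)$; hence the displayed condition is equivalent to $\sigma$ preserving the edge set $E(G)$, i.e. to $\sigma\in\Aut{G}$. This yields $\Aut{G}=\Aut{I(G)}$ with no computation beyond unwinding the definitions.

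Next, for $\Aut{G}=\Aut{\rho(G)}$, the inclusion $\Aut{G}\subseteq\Aut{\rho(G)}$ is the standard fact that a graph automorphism is an isometry: a walk of length $k$ from $x$ to $y$ is carried by $\sigma$ to a walk of length $k$ from $\sigma(x)$ to $\sigma(y)$, and applying the same argument to $\sigma^{-1}$ gives the reverse, so $\rho_G(\sigma(x),\sigma(y))=\rho_G(x,y)$ for all $x,y$, i.e. $\sigma$ preserves $v_{\rho(G)}$. For the reverse inclusion, suppose $\sigma\in\Aut{\rho(G)}$, so $\rho_G(\sigma(x),\sigma(y))=\rho_G(x,y)$ for all $x,y$. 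Since $\{x,y\}\in E(G)$ iff $\rho_G(x,y)=1$, we obtain $\{x,y\}\in E(G)\iff\rho_G(\sigma(x),\sigma(y))=1\iff\{\sigma(x),\sigma(y)\}\in E(G)$, whence $\sigma\in\Aut{G}$. Combining the two chains gives $\Aut{G}=\Aut{I(G)}=\Aut{\rho(G)}$.

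There is essentially no hard step here; the only points deserving a word of care are that ``distance one'' really characterises edges (true for any graph, with the convention $\rho_G(x,y)=\infty$ for vertices in different components when $G$ is disconnected, which leaves the argument unaffected) and that in the forward inclusion for $\rho(G)$ one must transport walks by both $\sigma$ and $\sigma^{-1}$ to conclude equality, not merely an inequality, of shortest-path lengths.
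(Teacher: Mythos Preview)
Your proof is correct and follows essentially the same route as the paper's: the first equality is unwound from the definitions, the inclusion $\Aut{G}\subseteq\Aut{\rho(G)}$ uses that automorphisms preserve distances, and the reverse inclusion uses that edges are exactly the distance-$1$ pairs. The only cosmetic difference is that the paper phrases this last step via the operator $U_1$, observing $U_1(\rho(G))=G$.
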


\begin{proof}
First equation is obvious. Let $\varphi \in \Aut{G}$ it is also an
isomorphism of $\rho(G)$ because isomorphisms preserves distances.

Let $\varphi \in \Aut{\rho(G)}$ because 
\[
U_1(\rho(G))= G,
\]
hence it is also an isomorphism of $G$.
\end{proof}

\begin{remark}
The opposite implication from the corollary \ref{c1} does not hold,
\[
|s(H*G)|=1 \notimplies \Aut{G} \circ \Aut{H}= pso(H,G).
\]
Let us denote the following graph by $H'$.
\begin{center}
\definecolor{qqqqff}{rgb}{0,0,1}
\definecolor{uququq}{rgb}{0.25,0.25,0.25}
\definecolor{zzttqq}{rgb}{0.6,0.2,0}
\definecolor{xdxdff}{rgb}{0.49,0.49,1}
\begin{tikzpicture}[line cap=round,line join=round,>=triangle 45,x=1.0cm,y=1.0cm]
\clip(-0.56,-3.34) rectangle (8.25,3.74);
\fill[color=zzttqq,fill=zzttqq,fill opacity=0.1] (0,1) -- (0,-1) -- (1.9,-1.62) -- (3.08,0) -- (1.9,1.62) -- cycle;
\fill[color=zzttqq,fill=zzttqq,fill opacity=0.1] (8,-1) -- (8,1) -- (6.1,1.62) -- (4.92,0) -- (6.1,-1.62) -- cycle;
\draw [color=zzttqq] (0,1)-- (0,-1);
\draw [color=zzttqq] (0,-1)-- (1.9,-1.62);
\draw [color=zzttqq] (1.9,-1.62)-- (3.08,0);
\draw [color=zzttqq] (3.08,0)-- (1.9,1.62);
\draw [color=zzttqq] (1.9,1.62)-- (0,1);
\draw [color=zzttqq] (8,-1)-- (8,1);
\draw [color=zzttqq] (8,1)-- (6.1,1.62);
\draw [color=zzttqq] (6.1,1.62)-- (4.92,0);
\draw [color=zzttqq] (4.92,0)-- (6.1,-1.62);
\draw [color=zzttqq] (6.1,-1.62)-- (8,-1);
\draw (3.08,0)-- (4.92,0);
\draw (3.08,0)-- (4.92,0);
\draw (1.9,1.62)-- (0,-1);
\draw (0,1)-- (1.9,-1.62);
\draw (6.1,1.62)-- (8,-1);
\draw (8,1)-- (6.1,-1.62);
\begin{scriptsize}
\fill [color=xdxdff] (0,1) circle (1.5pt);
\fill [color=xdxdff] (0,-1) circle (1.5pt);
\fill [color=uququq] (1.9,-1.62) circle (1.5pt);
\fill [color=uququq] (3.08,0) circle (1.5pt);
\fill [color=uququq] (1.9,1.62) circle (1.5pt);
\fill [color=qqqqff] (8,1) circle (1.5pt);
\fill [color=qqqqff] (8,-1) circle (1.5pt);
\fill [color=uququq] (6.1,1.62) circle (1.5pt);
\fill [color=uququq] (4.92,0) circle (1.5pt);
\fill [color=uququq] (6.1,-1.62) circle (1.5pt);
\end{scriptsize}
\end{tikzpicture}
\end{center}
And $G'=K_{9,1}$,
we will prove that $H=I(H'),~ G=\rho(G')$ is a counterexample. 
\begin{align*}
s(I(H')*\rho(G'))&=s(I(H')*(2I(K_{10})-I(G'))\\
&=2s(I(H')*I(K_{10}))-s(I(H')*I(G'))\\
&\overset{\ref{thmdeg}}{=}2\{|E(H')|\}-\{deg_H(v)|v\in V(H)\}\\
&=\{30\}-\{3\}
=\{27\},
\end{align*} if 
\[\Aut{\rho(G)} \circ \Aut{I(H)}= pso(H,G)
\]
was satisfied then it would be satisfied also
\[
pso(H,G)= \Aut{I(H)} \circ   \Aut{\rho(G)}  = \Aut{\rho(H)} \circ   \Aut{I(G)}
\]
and then would be by corollary \ref{c1}
\[
|s(\rho(H)*I(G))|=1.
\] 
But this is not true, if we consider two pseudoorderings, $f$ sending a vertex on the side to a center and $g$ sending some of the vertices on the bridge to a center,
we will get that $s(\rho(H)*_f I(G))>s(\rho(H)*_g I(G))$. 
\end{remark}

\begin{definition}
Let $H,\,G$ be $R$-weighted complete graphs we define 
\[
H \perp G \text{  if and only if  }  |s(H*G)|=1.
\]
\end{definition}

\begin{definition}
Let $\mathscr{G},\,\mathscr{H}$ be classes of $R$-weighted complete graphs we define 
\[
\mathscr{H} \perp \mathscr{G} \text{  if and only if  }  \forall H \in \mathscr{H}, \, \forall G \in \mathscr{G}: ~ H\perp G.
\]
\end{definition}

\begin{definition}
Let $\mathscr{H}$ be a class of $R$-weighted complete graphs we define a class
\[
{\mathscr{H}}^{\perp}= \{ G \,|\,\forall H \in \mathscr{H} :~ H\perp G \}.
\]
Let $H$ be a $R$-weighted complete graph we denote
\[
H^{\perp}=\{H\}^{\perp}.
\]
\end{definition}

\begin{proposition}
\[
I^{-1}\left(I(S_n)^{\perp}\right)=\{\text{set of all regular graphs of order }n\}
\]
where $S_n=K_{1,n-1}$.
\end{proposition}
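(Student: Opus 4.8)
The plan is to reduce the whole statement to Theorem~\ref{thmdeg}. First I would unwind the two definitions appearing on the right-hand side of the target equality. By definition $I(S_n)^{\perp}$ is the class of $R$-weighted complete graphs $G'$ (on $n$ vertices) satisfying $|s(I(S_n)*G')|=1$, and $I^{-1}$ of a class $\mathscr{C}$ is the set of unweighted graphs $G$ on $n$ vertices with $I(G)\in\mathscr{C}$. So what must be shown is: for an unweighted graph $G$ on $n$ vertices, $|s(I(S_n)*I(G))|=1$ if and only if $G$ is regular. (I would also note that $I$ is well-defined for every unweighted graph, the parenthetical ``connected'' in Definition~\ref{dirho} being needed only so that $\rho$ is finite, so no extra hypothesis is placed on $G$.)

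Next I would apply Theorem~\ref{thmdeg} to this $G$: since $|V(G)|=n$ we have $S_{|V(G)|}=S_n$, and the theorem gives $s(I(S_n)*I(G))=\{\deg_G(v)\mid v\in V(G)\}$. The one point worth making explicit is why the spectrum is exactly the \emph{full} set of vertex degrees: from the computation in the proof of Theorem~\ref{thmdeg}, $s(I(S_n)*_f I(G))=\deg_G(f(c))$ where $c$ is the centre of the star, and as $f$ ranges over all bijections $V(S_n)\to V(G)$ the value $f(c)$ runs over every vertex of $G$; hence no degree is omitted and $s(I(S_n)*I(G))=\{\deg_G(v)\mid v\in V(G)\}$.

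The conclusion is then immediate: the set $\{\deg_G(v)\mid v\in V(G)\}$ is a singleton precisely when every vertex of $G$ has the same degree, i.e.\ precisely when $G$ is a regular graph. Therefore $I(G)\in I(S_n)^{\perp}$ if and only if $G$ is regular, and applying $I^{-1}$ yields
\[
I^{-1}\bigl(I(S_n)^{\perp}\bigr)=\{\text{regular graphs of order }n\},
\]
as claimed.

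I do not expect a genuine obstacle here. The only places demanding a little care are the bookkeeping of the $I$, $I^{-1}$ and $\perp$ definitions, and the observation that the pseudoordering can send the star's centre to an arbitrary vertex, so that every degree actually occurs in the spectrum; everything else is a direct quotation of Theorem~\ref{thmdeg}.
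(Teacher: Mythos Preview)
Your proposal is correct and follows exactly the same approach as the paper: unwind the definitions of $\perp$ and $I^{-1}$, invoke Theorem~\ref{thmdeg} to identify $s(I(S_n)*I(G))$ with the set of degrees of $G$, and observe that this set is a singleton precisely when $G$ is regular. Your version is simply a more detailed write-up of the paper's two-line proof, with the extra care about surjectivity of $f(c)$ and the remark on $I$ being defined for all unweighted graphs being harmless elaborations.
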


\begin{proof}
\[
I^{-1}\left(I(S_n)^{\perp}\right)=\{ H \,| I(H)\perp I(S_n) \},
\]
from the theorem \ref{thmdeg} we know that $s(I(H)*I(S_n))$ is a set of degrees of $H$, if it must 
be one element set that means it is a regular graph.
\end{proof}

\begin{proposition}\label{rhoi}
\[
I^{-1}\left(\rho(S_n)^{\perp}\right)=\{\text{set of all regular graphs of order }n\}.
\]
\end{proposition}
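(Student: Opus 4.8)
The plan is to reduce the statement to the previous proposition (and to Theorem~\ref{thmdeg}) by exhibiting $\rho(S_n)$ as a linear combination of $I$'s and then using strict distributivity. Write $S_n=K_{1,n-1}$ and note first the pointwise identity of $R$-weighted complete graphs
\[
\rho(S_n)=2\cdot I(K_n)+(-1)\cdot I(S_n),
\]
which one checks on the two types of edge of $K_n$: an edge joining the centre of $S_n$ to a leaf has distance $1=2\cdot 1-1$, while an edge joining two leaves has distance $2=2\cdot 1-0$. Recall also that, by definition, $I^{-1}\bigl(\rho(S_n)^{\perp}\bigr)$ is the set of unweighted graphs $H$ on $n$ vertices with $\bigl|s(\{\rho(S_n)\}*\{I(H)\})\bigr|=1$, and that by Lemma~\ref{l1} this is the same as $s(\{I(H)\}*\{\rho(S_n)\})$.

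Next I would compute that spectrum. Since every edge of $I(K_n)$ has weight $1$, for any pseudoordering $f$ one has $s(I(H)*_f I(K_n))=|E(H)|$, so $\bigl|s(\{I(H)\}*\{2\cdot I(K_n)\})\bigr|=1$. Hence Theorem~\ref{diststrict} applies with $\mathscr{H}_1=\{2\cdot I(K_n)\}$, $\mathscr{H}_2=\{(-1)\cdot I(S_n)\}$ and $\mathscr{G}=\{I(H)\}$, giving
\[
s(\{I(H)\}*\{\rho(S_n)\})=s(\{I(H)\}*\{2\cdot I(K_n)\})+s(\{I(H)\}*\{(-1)\cdot I(S_n)\}).
\]
Using $s\bigl((2\cdot(-))\bigr)=2\,s(-)$ and $s\bigl((-1)\cdot(-)\bigr)=-\,s(-)$, together with Theorem~\ref{thmdeg} (and Lemma~\ref{l1} for commutativity) for $s(\{I(H)\}*\{I(S_n)\})=\{deg_H(v)\mid v\in V(H)\}$, the right-hand side becomes
\[
\{2|E(H)|\}-s(\{I(H)\}*\{I(S_n)\})=\bigl\{\,2|E(H)|-deg_H(v)\ \bigm|\ v\in V(H)\,\bigr\}.
\]

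Finally I would conclude: the set $\bigl\{\,2|E(H)|-deg_H(v)\mid v\in V(H)\,\bigr\}$ is a singleton precisely when all vertices of $H$ have the same degree, i.e. when $H$ is regular; hence $I(H)\perp\rho(S_n)$ iff $H$ is regular, which is the asserted equality. The only delicate point is the bookkeeping needed to invoke Theorem~\ref{diststrict} legitimately — one must verify the singleton hypothesis $\bigl|s(\{I(H)\}*\{2\cdot I(K_n)\})\bigr|=1$ before splitting the sum, and correctly track the scalars $2$ and $-1$ acting on $s$ — everything else is a direct appeal to Theorem~\ref{thmdeg}. Alternatively one can bypass Theorem~\ref{diststrict} entirely and compute directly $s(I(H)*_f\rho(S_n))=2|E(H)|-deg_H\bigl(f^{-1}(c)\bigr)$, where $c$ is the centre of $S_n$, by splitting the sum over the edges of $H$ according to whether they are incident to $f^{-1}(c)$, and let $f$ range over all bijections.
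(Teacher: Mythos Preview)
Your proof is correct and follows essentially the same route as the paper: both write $\rho(S_n)=2\,I(K_n)-I(S_n)$, verify the singleton hypothesis for the $I(K_n)$ term, apply Theorem~\ref{diststrict} to split the spectrum, and then reduce to the degree set via Theorem~\ref{thmdeg}. The only cosmetic difference is that the paper phrases the computation as the slightly stronger statement $I(S_n)^{\perp}=\rho(S_n)^{\perp}$ (valid for all weighted $H$, not only $I(H)$) and then invokes the preceding proposition, whereas you carry out the computation directly for $I(H)$ and read off the regularity criterion from $\{2|E(H)|-deg_H(v)\}$.
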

\begin{proof}
We can prove it in similar way as before or we can do it better.
We will prove that 
\[
I(S_n)^{\perp}= \rho(S_n)^{\perp}.
\] 
All distances in $S_n$ are $1$ or $2$, this leads us to a formula 
\[
\rho(S_n)= 2I(K_n) - I(S_n),
\]
now by the theorem \ref{diststrict} is \begin{align}
s(H*\rho(S_n))&=\underbrace{2s(H*I(K_n))}_{\text{one number}} ~+~ s((-1)H*I(S_n))\notag\\
s(H*\rho(S_n))&=\underbrace{2s(H*I(K_n))}_{\text{one number}} ~-~ s(H*I(S_n))\notag
\end{align}
hence
\[
I(S_n)^{\perp}= \rho(S_n)^{\perp}.
\]
\end{proof}
From the previous proof we can get a proposition

\begin{proposition}
Let $G$ be an unweighted graph with diameter at most $2$ then 
\[
I(G)^{\perp}= \rho(G)^{\perp}.
\]
\end{proposition}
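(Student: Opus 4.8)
The plan is to mimic the proof of Proposition \ref{rhoi}, keeping track of exactly where the hypothesis $\operatorname{diam}(G)\le 2$ enters. First I would record the key identity: if every pair of distinct vertices of $G$ is at distance at most $2$, then $\rho_G(x,y)=1$ when $\{x,y\}\in E(G)$ and $\rho_G(x,y)=2$ otherwise, so that as $R$-weighted complete graphs on $V(G)$
\[
\rho(G) = 2\cdot I(K_{|V(G)|}) + (-1)\cdot I(G).
\]
(Diameter at most $2$ forces $G$ connected, so $\rho(G)$ is defined in the first place.)

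Next, fix an arbitrary $R$-weighted complete graph $H$ on $|V(G)|$ vertices; by the remark following Corollary \ref{c1} we may assume $V(H)=V(G)$. I would then apply Theorem \ref{diststrict} with $\mathscr{H}_1=\{2\cdot I(K_{|V(G)|})\}$, $\mathscr{H}_2=\{(-1)\cdot I(G)\}$ and $\mathscr{G}=\{H\}$: the required hypothesis $|s(\{2\cdot I(K_{|V(G)|})\}*\{H\})|=1$ holds because $2\cdot I(K_{|V(G)|}) *_f H$ merely doubles all weights of $H$ by Lemma \ref{l3}, so its edge-sum equals $2s(H)$ independently of $f$. Combining this with $s(((-1)\cdot I(G))*H) = -\,s(I(G)*H)$ and the symmetry $s(A*B)=s(B*A)$ coming from Lemma \ref{l1}, one obtains
\[
s(H*\rho(G)) = \bigl\{\, 2 s(H) - x \,\bigm|\, x \in s(H*I(G))\,\bigr\}.
\]
The right-hand side is the image of $s(H*I(G))$ under the bijection $x\mapsto 2s(H)-x$ of $R$, hence $|s(H*\rho(G))| = |s(H*I(G))|$; in particular $H\perp\rho(G)$ if and only if $H\perp I(G)$. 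Since $H$ was arbitrary and $(-)^{\perp}$ only records which $H$ satisfy $|s(H*(-))|=1$, this gives $\rho(G)^{\perp}=I(G)^{\perp}$.

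I do not expect a genuine obstacle here: the whole content lies in spotting the affine relation $\rho(G)=2\cdot I(K_{|V(G)|})-I(G)$, which holds precisely when $\operatorname{diam}(G)\le 2$, and in checking that Theorem \ref{diststrict} applies to the constant factor $2\cdot I(K_{|V(G)|})$. The only point needing a little care is bookkeeping of sides: Theorem \ref{diststrict} distributes over the \emph{left} argument of $*$, so one must either arrange $\rho(G)$ on the left throughout or, as above, invoke the symmetry of $s(A*B)$ (equivalently, that $\perp$ is a symmetric relation) exactly once.
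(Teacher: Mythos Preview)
Your proposal is correct and follows essentially the same route as the paper: the proposition is stated immediately after Proposition~\ref{rhoi} with the remark that it comes ``from the previous proof,'' i.e., from the identity $\rho(G)=2\cdot I(K_{|V(G)|})-I(G)$ (valid precisely when $\operatorname{diam}(G)\le 2$) together with Theorem~\ref{diststrict}. You are in fact more careful than the paper about the left/right bookkeeping for Theorem~\ref{diststrict}; one tiny quibble is that Lemma~\ref{l3} has $I(K_n)$ on the right, so your phrase ``merely doubles all weights of $H$'' is literally true only up to the isomorphism of Lemma~\ref{l1}, but the edge-sum conclusion $s(2\cdot I(K_{|V(G)|})*_f H)=2s(H)$ is unaffected.
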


\begin{definition}
Let $\mathscr{H},\,\mathscr{G}$ be classes of $R$-weighted complete graphs and $\mathscr{A}\subseteq \mathscr{P}(R)$, we define 
\[
\mathscr{H} \perp_{\mathscr{A}} \mathscr{G} \text{  if and only if  }  \forall H \in \mathscr{H}, \, \forall G \in \mathscr{G}: ~ s(H*G)\in \mathscr{A}.
\]
Analogously we define a set
\[
{\mathscr{H}}^{\perp_{\mathscr{A}}}= \{ G \,|\,\forall H \in \mathscr{H} :~ s(H*G)\in \mathscr{A} \}.
\]
\end{definition}

\begin{theorem}\label{uzavretost}
Let $\mathscr{A}\subseteq \mathscr{P}(R)$ be a set closed under $+$ and under multiplication by an element of $R$ and it is closed under subsets (this we will call an ideal), then
\[ 
{\mathscr{H}}^{\perp_{\mathscr{A}}}
\]
is closed under $+$, multiplication by an element of $R$.
\end{theorem}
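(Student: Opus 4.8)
The plan is to push both closure statements down to the level of a single pseudoordering, in the style of Lemmas~\ref{l8} and~\ref{l-1}, and then feed the resulting subsets of $R$ into the three properties bundled into the word ``ideal'' (closure of $\mathscr{A}$ under $+$, under multiplication by an element of $R$, and under taking subsets). So fix $H\in\mathscr{H}$, fix $G_1,G_2\in\mathscr{H}^{\perp_{\mathscr{A}}}$ on the common vertex set, and fix $r\in R$. First I would record the two pointwise identities
\[
H*_f(G_1+G_2)=(H*_fG_1)+(H*_fG_2),\qquad H*_f(r\cdot G_1)=r\cdot(H*_fG_1),
\]
valid for every pseudoordering $f$; both are immediate from the definitions of $*_f$, of $+$ and of scalar multiplication together with distributivity in $R$ (the first uses only left distributivity, exactly as in the proof of Lemma~\ref{l8}; the second uses commutativity of $R$, as is already implicitly used in Lemma~\ref{l1}). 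Combined with the obvious facts $s(K_1+K_2)=s(K_1)+s(K_2)$ and $s(r\cdot K)=r\cdot s(K)$ for a graph $K$, taking the union over all $f$ then yields
\[
s\bigl(H*(G_1+G_2)\bigr)\subseteq s(H*G_1)+s(H*G_2),\qquad s\bigl(H*(r\cdot G_1)\bigr)=r\cdot s(H*G_1).
\]
Note the asymmetry: the first is only an inclusion, because the decomposition of $H*_f(G_1+G_2)$ uses one and the same $f$ on both summands, so not every pair from $s(H*G_1)\times s(H*G_2)$ is realised; this is precisely where closure of $\mathscr{A}$ under subsets will be needed. The second is an equality.

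Now I would invoke the hypothesis on $G_1,G_2$. Since $G_1,G_2\in\mathscr{H}^{\perp_{\mathscr{A}}}$ we have $s(H*G_1)\in\mathscr{A}$ and $s(H*G_2)\in\mathscr{A}$. Closure of $\mathscr{A}$ under $+$ gives $s(H*G_1)+s(H*G_2)\in\mathscr{A}$, and closure under subsets then gives $s(H*(G_1+G_2))\in\mathscr{A}$; as $H\in\mathscr{H}$ was arbitrary, $G_1+G_2\in\mathscr{H}^{\perp_{\mathscr{A}}}$. For the scalar case, closure of $\mathscr{A}$ under multiplication by $r$ gives $s(H*(r\cdot G_1))=r\cdot s(H*G_1)\in\mathscr{A}$, and again $H$ was arbitrary, so $r\cdot G_1\in\mathscr{H}^{\perp_{\mathscr{A}}}$. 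This establishes both parts.

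There is no genuine obstacle here: the content is entirely bookkeeping, modelled on the arguments already given for Theorems~\ref{Tad} and~\ref{diststrict}, with the only subtlety being the asymmetry flagged above --- the $+$ case produces merely an inclusion of $s$-sets and therefore really does consume all three clauses of the definition of an ideal, whereas the scalar case produces an equality and uses only closure under multiplication by $R$. Closure under arbitrary finite sums then follows from the binary case by an immediate induction.
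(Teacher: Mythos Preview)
Your argument is correct and follows essentially the same route as the paper: you derive the inclusion $s(H*(G_1+G_2))\subseteq s(H*G_1)+s(H*G_2)$ (which is exactly Theorem~\ref{Tad} applied on the other side) and the equality $s(H*(r\cdot G_1))=r\cdot s(H*G_1)$, then feed these into the three closure properties of~$\mathscr{A}$. Your write-up is slightly more explicit than the paper's (you reprove the pointwise identities rather than citing Theorem~\ref{Tad}, and you flag the asymmetry between the two cases), but the logical content is identical.
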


\begin{proof}
Let $G,G'\in {\mathscr{H}}^{\perp_{\mathscr{A}}}$, this means that 
\[
\forall H \in \mathscr{H} :~ s(H*G),s(H*G')\in \mathscr{A}. 
\]
By the theorem \ref{Tad}
\[
s(H*(G+G'))\subseteq s(H*G) + s(H*G'), 
\]
because $s(H*G),s(H*G')\in \mathscr{A}$ then by the assumptions also $s(H*G) + s(H*G')\in \mathscr{A}$ and then also $s(H*(G+G'))\in \mathscr{A}$. 

Let now $G\in {\mathscr{H}}^{\perp_{\mathscr{A}}}$ and $r\in R$, this means that
\[
\forall H \in \mathscr{H} :~ s(H*G)\in \mathscr{A},
\] 
hence
\[
\forall H \in \mathscr{H} :~ s(H*(r\cdot G))\in \mathscr{A} \iff \forall H \in \mathscr{H} :~ r\cdot s(H*G)\in \mathscr{A}
\]
and the right hand side is true by assumptions.
\end{proof}

\begin{proposition}
\[
I^{-1}\left(\rho(\{\text{connected bipartite graphs}\})^{\perp_{\mathscr{P}(2\mathbb{Z})}} \right)=\{\text{GWEC}\},
\]
Where GWEC means graphs with Eulerian components. 
\end{proposition}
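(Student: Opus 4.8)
The plan is to unwind the left-hand side exactly as the earlier propositions on regularity were unwound, replacing "degree of a vertex" by "degree of a vertex within one colour class of a bipartition." Concretely, for a connected bipartite graph $B$ with parts $X,Y$ and a pseudoordering $f:V(B)\rightarrow V(G)$, the quantity $s\bigl(\rho(B)*_f I(G)\bigr)$ counts, over each edge $\{x,y\}\in E(B)$ with $x\in X$, $y\in Y$, the contribution $\rho_B(x,y)\cdot v_{I(G)}(\{f(x),f(y)\})$. Since $B$ is bipartite and connected, all distances between $X$ and $Y$ are odd and all distances within a part are even; in particular $v_{\rho(B)}(e)$ is odd on the "crossing" edges that matter and even elsewhere. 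The first step is therefore to reduce to the generating family: it suffices to take $B$ ranging over connected bipartite graphs of the special shape "a vertex $c$ joined to some set $S$ and then a matching/forest completing the bipartition" — in fact the relevant extremal choices are stars $S_n=K_{1,n-1}$ and, more generally, double stars, so that $s(\rho(B)*_fI(G))$ localizes to a sum of degrees of a few vertices of $G$. This is the same localization trick used in Theorem \ref{thmdeg} and Proposition \ref{rhoi}.

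Next I would translate the membership condition. The statement $I(H)\in \rho(\{\text{connected bipartite graphs}\})^{\perp_{\mathscr{P}(2\mathbb{Z})}}$ means: for every connected bipartite $B$ on $|V(H)|$ vertices and every pseudoordering $f$, the number $s\bigl(\rho(B)*_f I(H)\bigr)$ is even. Using the decomposition $\rho(B)=2I(K_n)-I(B)+(\text{even correction on within-part pairs})$ — more carefully, writing $\rho(B)$ modulo $2$ — and Theorem \ref{diststrict} (applicable since $|s((-1)\cdot I(K_n)*I(H))|=1$ and the even-weighted pieces contribute a single even number), the parity of $s(\rho(B)*_fI(H))$ equals the parity of $s(I(B)*_fI(H))$, which is the number of edges of $B$ that land on edges of $H$ under $f$. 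So the condition becomes: for every connected bipartite $B$ and every way of overlaying $B$ on $H$, the overlap $|E(B)\cap f^{-1}(E(H))|$ is even. Taking $B$ to be a path that walks along edges of $H$ shows this forces every closed walk in $H$ to have even "edge-count parity" in a suitable sense; specializing $B$ to stars recovers that every vertex of $H$ has even degree, and specializing to spanning connected bipartite subgraphs that isolate a single component forces connectivity of each component with all-even degrees. That is precisely the characterization of a graph all of whose components are Eulerian (even degree plus connected).

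Conversely, if $H$ has Eulerian components, I would show directly that for any connected bipartite $B$ and any $f$, the overlap graph $f^{-1}(E(H))\cap E(B)$ — viewed inside $H$ — is a subgraph of an Eulerian graph, and crucially that $B$ being bipartite and connected forces the overlap to be a disjoint union of even structures: each component of $H$ contributes an even number of overlap edges because an Eulerian graph decomposes into cycles, a cycle meets any fixed edge set of a bipartite graph in an even number of edges when the cycle is itself even, and bipartiteness of $B$ makes every relevant cycle even. Summing over components keeps the total even, so $s(\rho(B)*_fI(H))\in 2\mathbb{Z}$ and hence $I(H)\in \rho(\{\text{connected bipartite graphs}\})^{\perp_{\mathscr{P}(2\mathbb{Z})}}$.

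The main obstacle I anticipate is the forward direction's claim that evenness of all star-overlaps (which only gives "all degrees even") plus evenness of path-overlaps actually upgrades to per-component connectivity, i.e. that one genuinely recovers "Eulerian components" and not merely "even-degree graph." The delicate point is choosing the right connected bipartite test graphs $B$ and pseudoorderings $f$ that detect a would-be disconnection of $H$ while keeping $B$ bipartite and connected; a natural candidate is a long even path threaded through one component of $H$ together with a bipartite "comb" covering the remaining vertices, arranged so that the parity of the overlap changes exactly when $H$ fails to have the component structure. I would also need to double-check the parity bookkeeping in the reduction $\rho(B)\equiv I(B)\pmod 2$, since $\rho(B)$ can take values larger than $2$ on crossing pairs; but because $B$ is connected bipartite, those crossing distances are all odd, so modulo $2$ they agree with the indicator of an edge, which is what makes the reduction valid.
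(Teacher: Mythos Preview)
There are two genuine gaps.

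\textbf{First, the parity reduction is wrong.} You claim that modulo $2$ one has $\rho(B)\equiv I(B)$, justified by ``crossing distances are all odd, so modulo $2$ they agree with the indicator of an edge.'' But $I(B)$ is $1$ only on \emph{edges} of $B$, while a crossing pair $\{x,y\}$ with $x\in X$, $y\in Y$ need not be an edge of $B$; its $\rho_B$-value is still odd. The correct congruence is $\rho(B)\equiv I(K_{|X|,|Y|})\pmod 2$, i.e.\ the indicator of \emph{all} crossing pairs, not just the edges of $B$. Consequently the parity of $s(\rho(B)*_f I(H))$ is not the overlap $|E(B)\cap f^{-1}(E(H))|$ but rather the number of edges of $H$ that $f$ sends across the bipartition $(X,Y)$. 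This changes the whole analysis: the relevant test graphs are effectively complete bipartite graphs $K_{a,n-a}$, and the single choice $a=1$ (the star) already suffices for the reverse inclusion, exactly as in the paper's proof via $\rho(S_n)=2I(K_n)-I(S_n)$ and Theorem~\ref{thmdeg}.

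\textbf{Second, you are proving a condition that is not there.} ``Graphs with Eulerian components'' means each connected component admits an Euler circuit; since components are connected by definition, this is equivalent to \emph{every vertex has even degree}. Your forward direction spends effort trying to detect ``per-component connectivity'' with connected bipartite test graphs, but there is nothing to detect. The paper's reverse inclusion is a single line: test with the star, get that all degrees are even, done. For the inclusion $\{\text{GWEC}\}\subseteq$ LHS your cycle-decomposition idea is the right one and matches the paper, but the justification ``bipartiteness of $B$ makes every relevant cycle even'' is not correct: the cycle lives in $H$ (equivalently in $K_{V(B)}$ via $f^{-1}$), and what matters is that \emph{any} cycle, of any length, crosses the bipartition $(X,Y)$ an even number of times; the odd crossing weights then sum to an even number and the even within-part weights contribute an even number as well. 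The paper carries this out for a single cycle and then invokes closure of $(\,\cdot\,)^{\perp_{\mathscr{P}(2\mathbb{Z})}}$ under $+$ (Theorem~\ref{uzavretost}) together with the edge-disjoint cycle decomposition of an even-degree graph.
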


\begin{proof}
At first we will prove that for $n\in \mathbb{N},\, n\leq |V(G)|$ is satisfied 
\[
s\left(I\left(C_{n}\sqcup \bigsqcup_{i=1}^{|V(G)|-n} \cdot \right) * \rho(G)\right) \in \mathscr{P}(2\mathbb{Z}),
\]
for $G$ connected bipartite graph. Let $A,\overline{A} \subseteq V(G)$ such that all edges of $G$ are between $A$ and $\overline{A}$.
All distances between these two sets are odd and all distances inside these two sets are even. 
\[
s\left(I\left(C_{n}\sqcup \bigsqcup_{i=1}^{|V(G)|-n} \cdot \right)*_f \rho(G)\right)=\sum_{i=1}^{n_a} a_i + \sum_{j=1}^{n_b} b_j
\] 
where $a_i$ is even and $b_j$ is odd and $n_b$ is even.
Indeed, $n_b$ is a number of edges \[
\{u,v\}\in E \left(C_{n}\sqcup \bigsqcup_{i=1}^{|V(G)|-n} \cdot \right)
\]
such that $|\{f(u),f(v)\}\cap A|=1$ and this needs to be even because as many times 
a cycle \[
f\left(\left(C_{n}\sqcup \bigsqcup_{i=1}^{|V(G)|-n} \cdot \right)\right) \]
leaves $A$ as many times it needs to come back.

The set $\mathscr{P}(2\mathbb{Z})$ is an ideal, by the theorem \ref{uzavretost}
is \[\rho(\{\text{connected bipartite graphs}\})^{\perp_{\mathscr{P}(2\mathbb{Z})}}\] 
closed under $+$,
hence
\[
I^{-1}\left(\rho(\{\text{connected bipartite graphs}\})^{\perp_{\mathscr{P}(2\mathbb{Z})}} \right)
\]
is closed under disjoint unions. We have proven that 
\[
\left(C_{n}\sqcup \bigsqcup_{i=1}^{|V(G)|-n} \cdot\right) \in I^{-1}\left(\rho(\{\text{connected bipartite graphs}\})^{\perp_{\mathscr{P}(2\mathbb{Z})}} \right),
\]
then also all disjoint unions of these graphs are also in this set. Any graph with Eulerian components can be written as disjoint union of graphs of the shape 
\[
C_{n}\sqcup \bigsqcup_{i=1}^{|V(G)|-n} \cdot,
\]
hence 
\[
\{\text{GWEC}\}\subseteq I^{-1}\left(\rho(\{\text{connected bipartite graphs}\})^{\perp_{\mathscr{P}(2\mathbb{Z})}} \right).
\]

Let now \[
G\in I^{-1}\left(\rho(\{\text{connected bipartite graphs}\})^{\perp_{\mathscr{P}(2\mathbb{Z})}} \right)
\]
this at least means that 
\[
s\left(I(G)*\rho(K_{1,|V(G)|-1})\right)\in \mathscr{P}(2\mathbb{Z}).
\]
We know that 
\[
\rho\left(K_{1,|V(G)|-1}\right)=2\cdot I\left(K_{|V(G)|}\right)-I\left(K_{1,|V(G)|-1}\right)
\]
it is explained in the proof of the proposition \ref{rhoi}. 
\begin{align*}
s(
I(G) &*\rho
(K_{1,|V(G)|-1}))\\
& =s\left(I(G)*\left(2\cdot I\left(K_{|V(G)|}\right)-I\left(K_{1,|V(G)|-1}\right)\right)\right)\\
& =2s\left(I(G)*I\left(K_{|V(G)|}\right)\right)-s\left(I(G)*I\left(K_{1,|V(G)|-1}\right)\right),
\end{align*}
hence 
\[
s\left(I(G)*\rho(K_{1,|V(G)|-1})\right)\in \mathscr{P}(2\mathbb{Z}) \iff s\left(I(G)*I(K_{1,|V(G)|-1})\right)\in \mathscr{P}(2\mathbb{Z}).
\]
By the theorem \ref{thmdeg} we know that 
$
s\left(I(G)*I(K_{1,|V(G)|-1})\right)$ is equal to set of degrees of $G$, hence we get that all degrees of $G$ are even and this means that $G$
has Eulerian components. And this gives us the second inequality
\[
\{\text{GWEC}\}\supseteq I^{-1}\left(\rho(\{\text{connected bipartite graphs}\})^{\perp_{\mathscr{P}(2\mathbb{Z})}} \right).
\]
\end{proof}

\renewcommand{\bibname}{references}

\end{document}